\documentclass[11pt,reqno]{amsart}

\usepackage[T1]{fontenc}
\usepackage{amsmath,amssymb,amsthm,amsfonts,mathtools}
\usepackage{enumitem}
\usepackage{hyperref}
\usepackage{doi}
\hypersetup{colorlinks=true,linkcolor=blue,citecolor=blue,urlcolor=blue}

\newtheorem{thm}{Theorem}[section]
\newtheorem{lem}[thm]{Lemma}
\newtheorem{prop}[thm]{Proposition}
\newtheorem{cor}[thm]{Corollary}

\newtheorem{conj}[thm]{Conjecture}
\theoremstyle{definition}

\newtheorem{rem}[thm]{Remark}

\numberwithin{equation}{section}

\newcommand{\K}{\mathbb{K}}
\newcommand{\R}{\mathbb{R}}
\newcommand{\C}{\mathbb{C}}
\newcommand{\PP}{\mathbb{P}}
\newcommand{\EE}{\mathbb{E}}
\newcommand{\abs}[1]{\lvert #1\rvert}
\newcommand{\norm}[1]{\lVert #1\rVert}
\DeclareMathOperator{\diag}{diag}
\DeclareMathOperator{\rhoo}{\rho}
\DeclareMathOperator{\sgn}{sgn}

\begin{document}
	
	\title[Rump's 100 Euro conjecture]{Plank theorems, Gaussian probabilistic estimates and Rump's 100 Euro conjecture}
	
	\author[T.~Zhang]{Teng Zhang}
	\address{School of Mathematics and Statistics, Xi'an Jiaotong University, Xi'an 710049, P. R. China}
	\email{teng.zhang@stu.xjtu.edu.cn}
	
	\subjclass[2020]{15A18, 15B48, 15A42, 52A40}
	\keywords{Sign-real spectral radius, sign-complex spectral radius, componentwise inequalities, Perron--Frobenius theory, plank theorems, Gaussian probabilistic estimates}
	
	\begin{abstract}
		We prove Rump's $100$ Euro conjecture by deriving a weighted affine escape theorem from Ball's plank theorem in [Invent. Math. \textbf{104} (1991)]. More precisely, let $\K\in\{\R,\C\}$ and let $A\in\K^{n\times n}$. For every $1\le p\le \infty$, we obtain an $\ell_p$-escape principle controlled by the row $\ell_q$-norms of $A$. Its cube case shows that $\abs{A}e=ne$ with all-one vector $e$ implies the existence of a nonzero vector $x$ satisfying $\norm{x}_{\infty}\le 1$ and $\abs{Ax}\ge e\ge |x|$, and hence settles the conjecture. As a consequence, we prove the global comparison
		$
		\rhoo(\abs{A})\le n\,\rho_{\K}(A),
		$
		where $\rho_{\K}$ is sign-real (complex) spectral radius.
		This is the sharp form of Rump's Perron--Frobenius type estimate without the factor $3+2\sqrt2$. Moreover, our $\ell_\infty$-escape principle  sharpens Rump's remarkable result in [SIAM Rev. \textbf{41} (1999)], namely the relation between the
		entrywise distance to singularity of a matrix and its entrywise (Bauer--Skeel) condition number. Finally, the weaker Euclidean row condition  is also investigated, including sharp quantitative bounds and counterexamples to possible strengthenings. In particular, we use Gaussian probabilistic estimates to establish a complex analogue of a conjecture by Bünger, the latter being stronger than the 100 Euro conjecture.
	\end{abstract}
	
	\maketitle
	\tableofcontents
	
	\section{Introduction}
	
	The Perron--Frobenius theory of nonnegative matrices is one of the cornerstones of matrix analysis and has numerous important applications \cite{HJ12,Mac00}. We begin with a brief account of its historical development.
	
	\subsection{Early history}
	
	In 1907, Perron \cite{Per07} proved his celebrated theorem, now known as \emph{Perron's theorem}:
	
	\medskip
	\emph{if $A$ is a positive matrix, then its spectral radius $\rho(A)$ is a positive and simple eigenvalue of $A$, it admits a positive eigenvector, and every other eigenvalue $\lambda$ satisfies $|\lambda|<\rho(A)$.}
	
	\medskip
	In 1912, Frobenius \cite{Fro12} extended this result from positive matrices to nonnegative matrices:
	
	\medskip
	\emph{if $A$ is an irreducible and nonnegative matrix, then $\rho(A)$ is a positive and simple eigenvalue of $A$, it admits a positive eigenvector, and every nonnegative eigenvector corresponding to $\rho(A)$ is a scalar multiple of that positive eigenvector.}
	\medskip
	
	The Collatz--Wielandt principle \cite{Col42,Wie50} expresses the Perron root through max-min and min-max characterizations and turns spectral information into an extremal problem:
	
	\medskip
	\emph{if $A$ is an irreducible and nonnegative matrix of order $n$ and $x$ is a positive vector, then}
	\[
	\rho(A)
	=
	\max_{x>0}\min_{1\le i\le n}\frac{(Ax)_i}{x_i}
	=
	\min_{x>0}\max_{1\le i\le n}\frac{(Ax)_i}{x_i}.
	\]
	The Krein--Rutman theorem \cite{KR48} extends the theory to compact cone-preserving operators on Banach spaces, thereby placing Perron--Frobenius theory within the general framework of positive operator theory. In a different direction, nonlinear Perron--Frobenius theory shows that several key features of the classical theory persist for order-preserving homogeneous maps on cones; see, for example, \cite{LN12}. These developments make clear that Perron--Frobenius theory is not merely a collection of results about nonnegative matrices, but a robust paradigm for extracting spectral information from positivity or order structure. Perron--Frobenius theory also provides deep structural information: algebraic simplicity in the primitive case, cyclic peripheral spectrum in the imprimitive case, monotonicity, and far-reaching connections with convexity, graph theory, probability, and dynamical systems. Standard modern accounts may be found in \cite{BP94,HJ12, Sen06}.
	
	\subsection{Modern developments}
	
	A call for ``challenges in matrix theory'' in 1996 \cite{Cha96} raised the following problem:
	
	\begin{center}
		\emph{Can we extend the Perron--Frobenius theory beyond the class of nonnegative matrices?}
	\end{center}
	
	In response, Rump \cite{Rum97b} initiated in 1997 a Perron--Frobenius type theory for matrices without sign restrictions. His starting point was the introduction of the sign-real spectral radius, a quantity which, although defined for arbitrary real matrices, retains several structural features of the Perron root of a nonnegative matrix. This theory was later refined through the study of cycle products \cite{Rum98}, variational and Collatz--Wielandt type characterizations \cite{Rum02}, the extension to complex matrices \cite{Rum03a}, and applications to interval regularity and $P$-matrices \cite{Roh12,Rum03b}. More recently, further developments were given by B\"unger \cite{Bun17}, B\"unger and Seeger \cite{BS24}, and Radons and Tonelli-Cueto \cite{RT23}.
	
	Before giving a detailed account of Rump's generalization of the Perron--Frobenius theory, we introduce some notation. We write $\mathbb{R}_{>0}^{\,n\times n}$ and $\mathbb{R}_{\ge 0}^{\,n\times n}$ for the sets of positive and nonnegative real $n\times n$ matrices, respectively. Similarly, $\mathbb{R}_{>0}^n$ and $\mathbb{R}_{\ge 0}^n$ denote the sets of positive and nonnegative vectors in $\mathbb{R}^n$. Correspondingly, for $x\in\mathbb{R}^n$, the notation $x>0$ means that $x_i>0$ for all $i$, and likewise for $x\ge 0$. For a matrix $A=(a_{ij})\in\C^{m\times n}$ and a vector $x=(x_i)\in \mathbb{C}^n$, absolute values and inequalities are understood entrywise. We write
	\[
	\abs{A}:=(\abs{a_{ij}}),\qquad \abs{x}:=(\abs{x_i}),\qquad e:=(1,\dots,1)^T.
	\]
	If $A\in\C^{n\times n}$, then $A^*$ denotes its conjugate transpose; for $\mu\subseteq\{1,\dots,n\}$, $A[\mu]$ denotes the principal submatrix of $A$ indexed by $\mu$.
	For a row or column vector $a=(a_1,\dots,a_n)$ with all $a_j\in \C$ and $1\le p\le\infty$, we use the standard notation
	\[
	\norm{a}_p:=
	\begin{cases}
		\left(\sum_{j=1}^n \abs{a_j}^p\right)^{1/p}, & 1\le p<\infty,\\[0.4em]
		\max\limits_{1\le j\le n}\abs{a_j}, & p=\infty.
	\end{cases}
	\]
We use
	$\operatorname{diag}(d_1,\dots,d_m)
$
to	denote the diagonal matrix whose diagonal entries are $d_1,\dots,d_m$.
	
	Next, we return to the main topic.	For $\mathbb{K}\in\{\mathbb{R}, \mathbb{C}\}$ and a matrix $A\in\mathbb{K}^{n\times n}$, Rump defines
	\begin{equation}\label{eq:intro-rhoK}
		\rho_{\mathbb K}(A):=
		\max\{|\lambda|:\lambda\in\mathbb{K},\ \exists\,0\neq x\in\mathbb{K}^n \text{ such that } |Ax|=|\lambda x|\}.
	\end{equation}
	Clearly, when $\mathbb{K}=\mathbb{R}_{> 0}$ ($\mathbb{R}_{\ge 0}$), this is precisely the classical Perron root of a positive (nonnegative) matrix. For $\mathbb{K}=\mathbb{R}$, the quantity \eqref{eq:intro-rhoK} is the \emph{sign-real spectral radius} introduced in \cite{Rum97b}; for $\mathbb{K}=\mathbb{C}$, it is the \emph{sign-complex spectral radius} introduced in \cite{Rum03a}.
	
	The sign-real spectral radius originally arose in the study of the componentwise distance to the nearest singular matrix \cite{Rum97a,Rum97c}. One of the reasons for its importance is that the classical Collatz--Wielandt characterization extends to both the sign-real \cite[Theorem~3.1]{Rum97b} and sign-complex settings \cite[Theorem~2.4]{Rum03a}:
	\begin{equation}\label{eq:intro-cw}
		\rho_{\mathbb K}(A)=
		\max_{\substack{x\in\mathbb{K}^n\\ x\neq 0}}
		\min_{x_i\neq 0}
		\left|\frac{(Ax)_i}{x_i}\right|.
	\end{equation}
	In addition, $\rho_{\mathbb K}$ enjoys a number of basic Perron--Frobenius type properties. Among them, the following seven will be particularly relevant for us; see \cite[Lemma~2.1, Corollary~2.4]{Rum97b} for $\mathbb K=\R$ and \cite[Lemma~2.1, Theorem~2.7]{Rum03a} for $\mathbb K=\C$:
	\begin{align*}
		\text{(i)}\quad & \rho_{\mathbb K}(A^*)=\rho_{\mathbb K}(A),\\[0.3em]
		\text{(ii)}\quad & |S_1|=|S_2|=I \;\Longrightarrow\; \rho_{\mathbb K}(S_1AS_2)=\rho_{\mathbb K}(A),\\[0.3em]
		\text{(iii)}\quad & P\in\mathbb{R}^{n\times n}\ \text{permutation matrix}
		\;\Longrightarrow\; \rho_{\mathbb K}(P^TAP)=\rho_{\mathbb K}(A),\\[0.3em]
		\text{(iv)}\quad & D\in\mathbb{K}^{n\times n}\ \text{nonsingular diagonal}
		\;\Longrightarrow\; \rho_{\mathbb K}(D^{-1}AD)=\rho_{\mathbb K}(A),\\[0.3em]
		\text{(v)}\quad & \alpha\in\mathbb{K}
		\;\Longrightarrow\; \rho_{\mathbb K}(\alpha A)=|\alpha|\,\rho_{\mathbb K}(A),\\[0.3em]
		\text{(vi)}\quad & \mu\subseteq\{1,\dots,n\}
		\;\Longrightarrow\; \rho_{\mathbb K}(A[\mu])\le \rho_{\mathbb K}(A),\\[0.3em]
		\text{(vii)}\quad & A=(a_{ij})\ \text{triangular}
		\;\Longrightarrow\; \rho_{\mathbb K}(A)=\max_i |a_{ii}|.
	\end{align*}
	These properties, together with the max-min formula \eqref{eq:intro-cw}, show that $\rho_{\mathbb K}$ behaves in many respects like a generalized Perron root; see \cite{Rum97b,Rum02,Rum03a}. Moreover, $\rho_{\K}$ is continuous; see \cite{Rum97b} for $\K=\R$ and \cite{Rum03a} for $\K=\C$.	
Zalar \cite{Zal99} proved that the linear preservers of the sign-real spectral radius are generated by four basic transformations and their combinations.
	
	\subsection{Rump's 100 Euro conjecture}
	A fundamental comparison established by Rump is
	\begin{equation}\label{eq:rump-factor}
		\rho_{\mathbb K}(A)\le \rhoo(\abs{A})\le (3+2\sqrt2)\,n\,\rho_{\mathbb K}(A)
		\qquad (\mathbb K\in\{\R,\C\}),
	\end{equation}
	see \cite[Theorem~5.7]{Rum97b} for $\mathbb K=\R$ and \cite[Theorem~6.3]{Rum03a} for $\mathbb K=\C$. This inequality links the generalized Perron root $\rho_{\mathbb K}(A)$ to the classical Perron root of the entrywise absolute value $\abs{A}$, and it is one of the main motivations for the problems studied in the present paper.
	
	The right-hand inequality in \eqref{eq:rump-factor} naturally raises the question whether the factor $3+2\sqrt2$ is really necessary. The corresponding sharp form for $\mathbb K=\R$ was formulated by Rump as the \emph{$100$ Euro conjecture} \cite{Rum100}:
	\begin{equation}\label{eq:100}
		\rhoo(\abs{A})\le n\,\rho_{\mathbb R}(A).
	\end{equation}
	It can be traced back to his 1997 work on sign-real spectral radii \cite[Section~6]{Rum97b} and was later highlighted in his problem note offering a prize for its solution \cite{Rum100}. With a history spanning nearly three decades, this conjecture has become one of the central problems in the theory of sign-real spectral radii and sign-real expansive matrices. It has also entered the problem literature of the International Linear Algebra Society; see, for example, Problem~63-1 in the IMAGE Problem Corner \cite{BMW23}. For a recent account of the area and the role of this conjecture within it, see \cite{BS24}.
	
	A convenient normalized formulation equivalent to \eqref{eq:100} is the following.
	
	\begin{conj}[Rump]\label{conj:100}
		Let $A\in\R^{n\times n}$ satisfy $\abs{A}e=ne$. Then there exists $x\in\R^n\setminus\{0\}$ such that
		\[
		\abs{Ax}\ge \abs{x}.
		\]
	\end{conj}
	
	Conjecture~\ref{conj:100} is sharp in the following dimension-free sense: there is no constant $\alpha>1$, independent of $n$, such that for every $n$ and every matrix $A\in\mathbb R^{n\times n}$ with $|A|e=ne$, there exists a nonzero vector $x\in\mathbb R^n$ satisfying
	\[
	|Ax|\ge \alpha |x|.
	\]
	Indeed, let
	\begin{align}\label{eq:Sn}
		S_n=(s_{ij})_{i,j=1}^n,\qquad
		s_{ij}=
		\begin{cases}
			0,& i=j,\\
			1,& i<j,\\
			-1,& i>j,
		\end{cases}
	\end{align}
	
	and define
	\[
	A_n:=\frac{n}{n-1}S_n.
	\]
	Then $|A_n|e=ne$. Moreover, Rump \cite[Lemma~5.6]{Rum97b} proved that
	\[
	\rho_{\mathbb R}(S_n)=1
	\]
	and therefore
	\[
	\rho_{\mathbb R}(A_n)=\frac{n}{n-1}.
	\]
	
	Now suppose that there existed a constant $\alpha>1$, independent of $n$, such that for every matrix $A$ with $|A|e=ne$ one could find $x\neq 0$ satisfying
	\[
	|Ax|\ge \alpha |x|.
	\]
	Applying this to $A_n$, and using Rump's Collatz--Wielandt type characterization \eqref{eq:intro-cw}, we would obtain
	\[
	\rho_{\mathbb R}(A_n)\ge \alpha.
	\]
	Hence
	\[
	\frac{n}{n-1}=\rho_{\mathbb R}(A_n)\ge \alpha
	\qquad\text{for all }n.
	\]
	This is impossible, since $n/(n-1)\to 1$ as $n\to\infty$ and $\alpha>1$ is fixed. Therefore no dimension-independent constant $\alpha>1$ can replace the factor $1$ in Conjecture~\ref{conj:100}.
	
	Geometrically, the condition $\abs{A}e=ne$ in Conjecture~\ref{conj:100} means that each row of $A$ lies on the boundary of the $\ell_1$-ball of radius $n$, equivalently, on the $n$-dimensional cross-polytope with vertices $\pm n e_1,\dots,\pm n e_n$. In the nonnegative setting, Perron--Frobenius theory shows that this normalization forces the Perron root of $\abs{A}$ to be $n$, with Perron eigenvector $e$. Conjecture~\ref{conj:100} asks whether, under the same row-wise $\ell_1$ normalization but without any sign restriction on $A$, one can still find a nonzero vector $x$ such that $\abs{Ax}\ge \abs{x}$. In this sense, the conjecture may be regarded as a sign-independent extension of a Perron--Frobenius type existence statement.
	
	From another viewpoint, the inequality $\abs{Ax}\ge \abs{x}$ is a coordinatewise non-contraction condition, so the problem may also be interpreted in the Banach lattice $\ell_\infty^n$ as the search for a direction along which the operator $A$ does not decrease magnitude. The geometric picture above is particularly well suited to our approach, since it turns the conjecture into an escape problem from coordinate slabs inside the cube.
	
	A stronger Euclidean variant of Conjecture~\ref{conj:100} was proposed by Bünger about 25 years ago \cite{Rum100}. It is known as the \emph{$200$ Euro conjecture}\footnote{In fact, this is yet another 100 euro conjecture; see \cite{Rum100}. A proof would automatically settle Conjecture~\ref{conj:100} as well, thus raising its effective value to 200 euros. Consequently, if the first conjecture were proved, Conjecture~\ref{conj:200} would still have 100 euros left on the table.}, as it supersedes Conjecture~\ref{conj:100}; see \cite{Rum100} and also \cite[Conjecture~27]{BS24}. It is particularly appealing because it is stronger than Conjecture~\ref{conj:100} for all \(n \ge 2\). It reads as follows.

%	A stronger Euclidean variant of Conjecture~\ref{conj:100}, due to B\"unger and also known as the \emph{$200$ Euro conjecture}\footnote{In fact, this is yet another 100 euro conjecture; see \cite{Rum100}. A proof would automatically settle Conjecture~\ref{conj:100} as well, thus raising its effective value to 200 euros. Consequently, if the first conjecture were proved, Conjecture~\ref{conj:200} would still have 100 euros left on the table.}, was proposed about twenty-five years ago \cite{Rum26}; see \cite{Rum100} and also \cite[Conjecture~27]{BS24}. It states the following.
	
	\begin{conj}[B\"unger]\label{conj:200}
		Let $A=(a_{ij})\in\R^{n\times n}$ with $n\ge 2$, and let $r_i=(a_{i1},\dots,a_{in})$ denote the $i$-th row of $A$. Assume that
		\[
		\norm{r_i}_2\ge \sqrt{n-1}
		\qquad (i=1,\dots,n).
		\]
		Then there exists $x\in\R^n\setminus\{0\}$ such that
		\[
		\abs{Ax}\ge \abs{x}.
		\]
	\end{conj}
	
As has been noted by B\"unger \cite{Rum100},  the assumption in Conjecture~\ref{conj:200} is weaker than the normalization condition in Conjecture~\ref{conj:100}. Indeed, if $\abs{A}e=ne$, then every row $r_i$ satisfies $\norm{r_i}_1=n$, and hence, by the Cauchy--Schwarz inequality,
	\[
	\norm{r_i}_2\ge \frac{\norm{r_i}_1}{\sqrt n}=\sqrt n>\sqrt{n-1}
	\qquad (i=1,\dots,n).
	\]
	Conjecture~\ref{conj:200} is  sharp in the sense that the conclusion $\abs{Ax}\ge \abs{x}$ cannot be replaced by $\abs{Ax}>\abs{x}$. Consider the matrix $A=S_n$ defined in \eqref{eq:Sn}. Each row of $A$ has Euclidean norm $\sqrt{n-1}$, so $A$ satisfies the hypothesis of Conjecture~\ref{conj:200}. Moreover, by \cite[Lemma~5.6]{Rum97b},
	\[
	\rho_{\mathbb R}(A)=1.
	\]
	If every matrix satisfying the Euclidean row condition admitted a nonzero vector $x$ with
	\[
	\abs{Ax}>\abs{x},
	\]
	then, by Rump's Collatz--Wielandt type characterization \eqref{eq:intro-cw}, we would obtain
	\[
	\rho_{\mathbb R}(A)>1,
	\]
	a contradiction. On the other hand, for
	\[
	x=(1,-1,0,\dots,0)^T
	\]
	one has
	\[
	Ax=(-1,-1,0,\dots,0)^T,
	\qquad\text{hence}\qquad
	\abs{Ax}=\abs{x}.
	\]
	Therefore the factor $1$ in Conjecture~\ref{conj:200} is best possible.
	\begin{rem}
		To obtain the sharp estimate in \eqref{eq:100}, it would suffice to prove Conjecture \ref{conj:100}. However, Conjecture \ref{conj:200} is particularly appealing because it assumes weaker conditions and is sharp.		
	\end{rem}
	\subsection{Main results}
	In this paper, we first provide an $\ell_p$-family of escape theorems by using Ball's plank theorem in \cite{Bal91}.
	
	\begin{thm}\label{thm:mainp}
		Let $1\le p\le \infty$, and let $q$ be the conjugate exponent of $p$, that is, $1/p+1/q=1$ with the usual convention $1/\infty=0$. Let $\K\in\{\R,\C\}$, let
		$A=(a_{ij})\in\K^{m\times n}$, and let $r_i=(a_{i1},\ldots,a_{in})$ denote the $i$-th row of $A$. Assume that
		\[
		\norm{r_i}_q\ge mt
		\qquad (i=1,\dots,m),
		\]
		for some $t>0$. Then there exists a nonzero vector $x\in\K^n$ with $\norm{x}_p\le 1$ such that
		\[
		\abs{Ax}\ge t\,e.
		\]
	\end{thm}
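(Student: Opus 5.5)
The plan is to read the conclusion as a plank (covering) statement and invoke Ball's plank theorem \cite{Bal91} directly. The version I will use is: if $X$ is a real Banach space, $\varphi_1,\dots,\varphi_m\in X^*$ have $\norm{\varphi_i}=1$, and $w_1,\dots,w_m\ge 0$ satisfy $\sum_i w_i<1$, then some $x$ in the closed unit ball of $X$ has $\abs{\varphi_i(x)}\ge w_i$ for all $i$. (Ball's theorem also allows arbitrary centres $m_i$; only the symmetric case is needed here.)

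\emph{Real case.} Take $X=(\R^n,\norm{\cdot}_p)$, whose dual norm is $\norm{\cdot}_q$. The hypothesis $\norm{r_i}_q\ge mt>0$ guarantees $r_i\neq 0$, so we may set
\[
\varphi_i(x)=\frac{r_i x}{\norm{r_i}_q},
\]
and by H\"older duality $\norm{\varphi_i}=1$. Fix $0<\varepsilon<1$ and choose the weights $w_i=(1-\varepsilon)t/\norm{r_i}_q$. Then $w_i\le (1-\varepsilon)/m$, so $\sum_i w_i\le 1-\varepsilon<1$. Ball's theorem yields $x_\varepsilon$ with $\norm{x_\varepsilon}_p\le 1$ and
\[
\abs{r_i x_\varepsilon}=\norm{r_i}_q\,\abs{\varphi_i(x_\varepsilon)}\ge (1-\varepsilon)t \qquad (i=1,\dots,m).
\]
The closed unit ball is compact, so some subsequence $x_{\varepsilon_k}$ with $\varepsilon_k\to 0$ converges to a limit $x$. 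The inequalities are closed conditions, so the limit satisfies $\norm{x}_p\le 1$ and $\abs{Ax}\ge te$. Since $t>0$ and $m\ge1$, we get $Ax\neq 0$, hence $x\neq0$.

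\emph{Complex case.} Regard $\C^n$ as a real Banach space $X_\R$, still with the norm $\norm{\cdot}_p$. Use the real functionals $\psi_i=\operatorname{Re}\varphi_i$, with $\varphi_i$ defined by the same formula as above.

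The one point to verify is that $\psi_i$ has real dual norm exactly $1$. The bound $\le1$ is immediate from $\abs{\operatorname{Re}z}\le\abs z$. For the reverse bound, take a vector $y$ attaining $\abs{\varphi_i(y)}=1$ with $\norm{y}_p\le1$; such a $y$ exists by equality in H\"older in finite dimensions. Multiply $y$ by a unimodular scalar so that $\varphi_i(y)=1$; then $\psi_i(y)=1$.

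Running the real argument for the $\psi_i$ produces $x$ with $\norm{x}_p\le 1$ and
\[
\abs{(Ax)_i}\ge \abs{\operatorname{Re}(Ax)_i}=\norm{r_i}_q\,\abs{\psi_i(x)}\ge t \qquad (i=1,\dots,m).
\]

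\emph{Main obstacle.} The substantive input is entirely Ball's theorem. The main risk is using it in the correct form: strict versus non-strict inequality on $\sum w_i$, closed versus open unit ball, and real scalars only. These are handled, respectively, by the $\varepsilon$-perturbation with compactness and by the realification step, so I expect the remaining verifications to be routine.
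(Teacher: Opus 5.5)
Your proposal is correct and follows the paper's route: regard $(\K^n,\norm{\cdot}_p)$ as a real Banach space, check that the real part of each row functional still has dual norm $\norm{r_i}_q$, and apply Ball's plank theorem with zero centres and weights $t/\norm{r_i}_q$. The only difference is cosmetic: to handle $\sum_i w_i\le 1$ you scale the weights by $1-\varepsilon$ and pass to a limit by compactness, whereas the paper (Corollary~\ref{cor:affine}) pads the weights to sum exactly $1$ by adding an auxiliary norm-one functional of weight $1-\sum_i w_i$.
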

	
	We remark that 
	when $m=n$, the constant $n$ for the condition $\norm{r_i}_q\ge nt$ in Theorem~\ref{thm:mainp} is optimal for $p=1$ as by $A=I_n$. 
	For other values of $p$, however, the optimal constant is not clear. In particular, the extreme case $p=\infty$ is more subtle and is closely related to the extremal phenomena behind Conjecture~\ref{conj:100}.

	Taking $p=\infty$,  $\K=\R$, $m=n$ and $t=1$ in Theorem~\ref{thm:mainp}, we immediately obtain the cube case needed for  Conjecture~\ref{conj:100}.
	
	\begin{cor}\label{cor:cube}
		Let $A\in\R^{n\times n}$ satisfy $\abs{A}e\ge ne$. Then there exists a vector $x\in[-1,1]^n\setminus\{0\}$ such that
		\[
		\abs{Ax}\ge e\ge \abs{x}.
		\]
		In particular, Conjecture~\ref{conj:100} holds.
	\end{cor}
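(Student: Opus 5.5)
We obtain Corollary~\ref{cor:cube} by specializing Theorem~\ref{thm:mainp}, which we take as given. Choose $p=\infty$, so that $q=1$, together with $\K=\R$, $m=n$ and $t=1$. For each $i$, the hypothesis $\abs{A}e\ge ne$ says exactly that $\norm{r_i}_1=\sum_j\abs{a_{ij}}\ge n = mt$. Theorem~\ref{thm:mainp} then yields a nonzero $x\in\R^n$ with $\norm{x}_\infty\le 1$ and $\abs{Ax}\ge e$. The condition $\norm{x}_\infty\le1$ means $x\in[-1,1]^n$, which is the same as $\abs{x}\le e$. Together these give the chain $\abs{Ax}\ge e\ge\abs{x}$ with $x\ne0$, which is the first assertion.

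For the ``in particular'' part, take $A$ with $\abs{A}e=ne$. This certainly satisfies $\abs{A}e\ge ne$, so the same $x$ satisfies $\abs{Ax}\ge\abs{x}$, which is Conjecture~\ref{conj:100}. To connect this with the form \eqref{eq:100}, we also check the equivalence with the normalized version. Given a general $A$, first reduce to the case where $\abs{A}$ is irreducible. If it is not, we can take a permuted block triangular form using property~(iii). Then $\rhoo(\abs{A})$ is attained on some irreducible diagonal block $A[\mu]$. By property~(vi) it suffices to treat that block, and its order $\abs{\mu}$ is at most $n$, so the resulting bound only improves.

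In the irreducible case, let $u>0$ be the Perron vector of $\abs{A}$ and set $D=\diag(u)$. Then $B:=\frac{n}{\rhoo(\abs A)}D^{-1}AD$ satisfies $\abs{B}e=ne$. The corollary gives $x\ne0$ with $\abs{Bx}\ge\abs{x}$. By the max--min characterization \eqref{eq:intro-cw}, this gives $\rho_\R(B)\ge1$. Properties (iv) and (v) then give $\rho_\R(A)\ge\rhoo(\abs A)/n$. The degenerate case $\rhoo(\abs A)=0$ is trivial.

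The main obstacle is the reducible case, and we handle it exactly as described: restrict to the irreducible block that carries the Perron root, using properties (iii) and (vi). Everything else in the argument is direct substitution into Theorem~\ref{thm:mainp}.
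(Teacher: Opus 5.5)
Your proposal is correct and matches the paper: the corollary is obtained by specializing Theorem~\ref{thm:mainp} to $p=\infty$, $\K=\R$, $m=n$, $t=1$, and the ``in particular'' follows because $\abs{A}e=ne$ is a special case of $\abs{A}e\ge ne$. Your extra reduction to the spectral form \eqref{eq:100} (restrict to the irreducible block carrying the Perron root, rescale by the Perron vector, then use \eqref{eq:intro-cw} with properties (iv)--(vi)) is essentially the paper's proof of Theorem~\ref{thm:global} in Appendix~\ref{sec:proof-compare}; the only difference is that you normalize the block to $\abs{B}e=ke$ and invoke the corollary, whereas the paper applies Theorem~\ref{thm:mainp} directly with $t=\alpha/k$.
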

	\begin{rem}
		As we will see in Proposition~\ref{prop:weighted-distance-improvement}, the estimate $|Ax|\ge e$ is the key ingredient in sharpening Rump's bound relating the entrywise distance to the nearest singular matrix to the Bauer--Skeel condition number in \cite{Rum99}. In particular, it is stronger than the conclusion $|Ax|\ge |x|$.
	\end{rem}
	We next turn from the existence statement to its spectral consequence. Combining the escape theorem with Rump's Collatz--Wielandt type characterization of $\rho_{\K}$ \eqref{eq:intro-cw}, we will obtain the following global comparison. Since these are standard conclusions in matrix theory and to avoid distracting the reader, we defer the proof to the Appendix~\ref{sec:proof-compare}.
	
	\begin{thm}\label{thm:global}
		Let $\K\in\{\R,\C\}$ and $A\in\K^{n\times n}$. Then
		\[
		\rhoo(\abs{A})\le n\,\rho_{\K}(A).
		\]
	\end{thm}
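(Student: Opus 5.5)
We reduce Theorem~\ref{thm:global} to Theorem~\ref{thm:mainp} (with $p=\infty$, $q=1$, $m=n$) via a diagonal similarity that normalizes the row sums of $\abs{A}$, combined with the Collatz--Wielandt formula \eqref{eq:intro-cw}. The argument is done first for irreducible $\abs{A}$ and then extended by continuity.

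\emph{Irreducible case.} Suppose $\abs{A}$ is irreducible. If $\rho(\abs{A})=0$ there is nothing to prove, so assume $r:=\rho(\abs{A})>0$. By Perron--Frobenius there is a positive vector $u>0$ with $\abs{A}u=ru$. Set $D:=\diag(u_1,\dots,u_n)$, a positive diagonal matrix, and
\[
B:=\frac{n}{r}\,D^{-1}AD .
\]
Since $D$ is positive, $\abs{B}=\frac{n}{r}D^{-1}\abs{A}D$, and hence
\[
\abs{B}e=\frac{n}{r}D^{-1}\abs{A}u=ne .
\]
Each row $b_i$ of $B$ therefore satisfies $\norm{b_i}_1=n$. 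Theorem~\ref{thm:mainp} with $p=\infty$, $q=1$, $m=n$, $t=1$ (valid for $\K=\R$ and $\K=\C$ alike) gives a nonzero $x\in\K^n$ with $\norm{x}_\infty\le 1$ and $\abs{Bx}\ge e\ge\abs{x}$. For every $i$ with $x_i\neq0$ this yields $\abs{(Bx)_i/x_i}\ge 1$, so \eqref{eq:intro-cw} gives $\rho_\K(B)\ge 1$. Properties (iv) and (v) give $\rho_\K(B)=\frac{n}{r}\rho_\K(A)$, since $D$ is a real, hence $\K$-valued, nonsingular diagonal matrix. Therefore $r\le n\rho_\K(A)$.

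\emph{General case.} For arbitrary $A$, perturb to $A_\varepsilon:=A+\varepsilon E$, where $E$ is the all-ones matrix. Then $\abs{A_\varepsilon}$ is entrywise positive for all small $\varepsilon>0$: entries with $a_{ij}=0$ become $\varepsilon$, and the others remain nonzero for small $\varepsilon$. Applying the irreducible case gives
\[
\rho(\abs{A_\varepsilon})\le n\rho_\K(A_\varepsilon).
\]
As $\varepsilon\to0$, $\rho(\abs{A_\varepsilon})\to\rho(\abs{A})$ by continuity of the spectral radius, and $\rho_\K(A_\varepsilon)\to\rho_\K(A)$ by the continuity of $\rho_\K$ quoted in the introduction. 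Letting $\varepsilon\to0$ gives the claim.

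Two technical points need checking. First, the diagonal scaling must preserve $\rho_\K$; this holds because $D$ is real positive and we use (iv) as stated. Second, and more delicate, is the limit in the general case, which relies on the continuity of $\rho_\K$ cited from Rump. If one prefers not to lean on that, an alternative is a Frobenius normal form reduction: permute $\abs{A}$ (property (iii)) to block triangular form and pick an irreducible diagonal block $A[\mu]$ with $\rho(\abs{A[\mu]})=\rho(\abs{A})$. Applying the irreducible case to that block gives
\[
\rho(\abs{A})\le \abs{\mu}\,\rho_\K(A[\mu])\le n\rho_\K(A)
\]
by (vi). A $1\times1$ zero block contributes $\rho(\abs{A[\mu]})=0$ and is trivial. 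This route avoids continuity entirely, so I would present it as the main argument and mention the perturbation as a remark.
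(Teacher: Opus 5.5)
Your proposal is correct, and your main route (the Frobenius normal form reduction) is essentially the paper's proof. The paper passes to an irreducible principal block $A[\mu]$ with $\rho(\abs{A[\mu]})=\rho(\abs{A})$, rescales by the Perron vector, applies Theorem~\ref{thm:mainp} with $p=\infty$ and $t=\rho(\abs{A})/\abs{\mu}$ (rather than first normalizing to $t=1$ via property (v) as you do), and concludes via \eqref{eq:intro-cw} together with properties (iv) and (vi); your perturbation variant is also valid given the quoted continuity of $\rho_{\K}$, but it is not needed.
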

	Our next theorem is derived from Ortega-Moreno's optimal plank theorem in Hilbert spaces \cite{Ort21}. The constant obtained in this way is in fact best possible: in general, there is no unit vector  $x\in\R^n$ such that
	$
	\abs{Ax}> 	\sqrt{n-1}\sin(\pi/(2n))e.
	$ As a corollary, we obtain a weaker form of Conjecture~\ref{conj:200} with the weaker coefficient
	$
	\sqrt{n-1}\sin(\pi/(2n)),
	$
	which is stronger than the coefficient $\sqrt{n-1}/n$ obtained by taking $m=n$ and $p=2$ in Theorem~\ref{thm:mainp}.
	\begin{thm}\label{thm:euclid-sine}
		Let $\K\in\{\R,\C\}$ and $A\in\K^{n\times n}$ with $n\ge 2$, and let $r_i$ denote the $i$-th row of $A$. Assume that
		\[
		\norm{r_i}_2\ge \sqrt{n-1}
		\qquad (i=1,\dots,n).
		\]
		Then there exists a unit vector $x\in\K^n$ with $\norm{x}_2=1$ such that
		\[
		\abs{Ax}\ge \sqrt{n-1}\sin\!\Bigl(\frac{\pi}{2n}\Bigr)e,
		\]
		where the constant $	\sqrt{n-1}\sin(\pi/(2n))$ is optimal when $\K=\R$. In particular,  there exists a nonzero vector $x\in\K^n$ with $\norm{x}_2= 1$ such that
		\[
		\abs{Ax}\ge \sqrt{n-1}\sin\!\Bigl(\frac{\pi}{2n}\Bigr)\,\abs{x}.
		\]
	\end{thm}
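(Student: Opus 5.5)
The plan is to derive the existence part from Ortega-Moreno's optimal plank theorem in Hilbert space \cite{Ort21}, in its affine (``escape'') form. For unit vectors $u_1,\dots,u_n$ in a real Hilbert space, it gives a unit vector $x$ with $\abs{\langle u_i,x\rangle}\ge \sin(\pi/(2n))$ for all $i$. This is the sharp value of the plank problem, realised by $n$ equiangular directions in the plane.

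\textbf{Reduction for $\K=\R$.} Set $u_i:=r_i^T/\norm{r_i}_2$. By the cited theorem there is a unit $x\in\R^n$ with $\abs{\langle u_i,x\rangle}\ge\sin(\pi/(2n))$. Then
\[
\abs{(Ax)_i}=\norm{r_i}_2\,\abs{\langle u_i,x\rangle}\ge \sqrt{n-1}\,\sin\!\Bigl(\frac{\pi}{2n}\Bigr).
\]
This gives the first display. The second display follows because $\norm{x}_2=1$ implies $\abs{x}\le e$ entrywise.

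\textbf{The case $\K=\C$.} Here $(Ax)_i=\sum_j a_{ij}x_j$, and I view $\C^n$ as the real Hilbert space $\R^{2n}$ with inner product $\operatorname{Re}\langle\cdot,\cdot\rangle$. Since $\abs{(Ax)_i}\ge\abs{\operatorname{Re}(Ax)_i}$, it suffices to find a unit $x$ with $\abs{\operatorname{Re}\langle \bar r_i^{\,T},x\rangle}\ge \norm{r_i}_2\sin(\pi/(2n))$. The vectors $\bar r_i^{\,T}/\norm{r_i}_2$ are unit vectors in $\R^{2n}$, and the real optimal plank theorem applies in any dimension. If I recall correctly, Ortega-Moreno's theorem is itself stated for complex Hilbert spaces too, but the real-part reduction avoids depending on that.

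\textbf{Optimality for $\K=\R$.} The example is a matrix whose rows are $\sqrt{n-1}$ times $n$ unit vectors for which no unit $x$ beats $\sin(\pi/(2n))$ in all coordinates simultaneously. The natural choice is the extremal configuration for the plank problem:
\[
u_i=(\cos(i\pi/n),\,\sin(i\pi/n),\,0,\dots,0),\qquad i=1,\dots,n.
\]
Any unit $x$ can be written with planar part $s(\cos\theta,\sin\theta)$, where $s\le1$. Then $\langle u_i,x\rangle=s\cos(\theta-i\pi/n)$. The angles $i\pi/n$ are spaced $\pi/n$ apart modulo $\pi$, so some $\theta-i\pi/n$ lies within $\pi/(2n)$ of $\pi/2$ modulo $\pi$. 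That index has $\abs{\langle u_i,x\rangle}\le\sin(\pi/(2n))$, hence $\abs{(Ax)_i}\le\sqrt{n-1}\sin(\pi/(2n))$. So no unit $x$ achieves strict inequality in every coordinate. Rows with norm exactly $\sqrt{n-1}$ satisfy the hypothesis, which settles optimality.

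\textbf{Main obstacle.} I expect the main obstacle to be invoking Ortega-Moreno's theorem in the right form. It is usually phrased as a covering statement: $n$ planks of half-widths $w_i$ with $\sum w_i<\ldots$ cannot cover the ball. I need the symmetric version that produces a unit point outside all planks $\{\abs{\langle u_i,x\rangle}<\sin(\pi/(2n))\}$. Ortega-Moreno's result states this directly: for unit $u_1,\dots,u_n$ there is a unit $x$ with $\abs{\langle u_i,x\rangle}\ge\sin(\pi/(2n))$ for all $i$, answering Fejes Tóth's zone conjecture. So I would quote it verbatim and check that the complex reduction is legitimate. The rest is bookkeeping.
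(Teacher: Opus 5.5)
Your proposal is correct and follows the paper's proof essentially verbatim. Both apply Ortega-Moreno's theorem to the normalized (conjugated) rows in the real Hilbert space underlying $\K^n$, use $|(Ax)_i|\ge|\operatorname{Re}(Ax)_i|$, and prove optimality with $n$ planar rows of norm $\sqrt{n-1}$ at angles spaced $\pi/n$ apart; the only cosmetic difference is that you treat $\K=\R$ separately before the real-part reduction, whereas the paper handles both fields in one step.
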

	\begin{rem}
		For any $1\le p\le \infty$, the stronger normalized conclusion $\abs{Ax}\ge e$ with $\norm{x}_p\le 1$ is false under the hypotheses of Conjecture~\ref{conj:200}. The related counterexamples are given in Section~\ref{sec:euclid}.
	\end{rem}
Next, we establish a complex analogue of Conjecture~\ref{conj:200} under weaker assumptions. The argument combines  Gaussian probabilistic estimates with a covering argument on punctured Euclidean space.
	\begin{thm}\label{thm:complex-general}
		Let $A=(a_{ij})\in \C^{n\times n}$ with $n\ge 2$, and let $r_i$ denote the $i$-th row of $A$. Assume that
		\begin{equation*}
			\norm{r_i}_2\ge 1
			\qquad (i=1,\ldots,n),\quad
			\text{ and }\quad
			\sum_{i=1}^n \frac{1}{1+\norm{r_i}_2^2}\le 1.
		\end{equation*}
		Then there exists a nonzero vector $x\in \C^n$ such that
		\[
		\abs{Ax}\ge \abs{x}.
		\]
	\end{thm}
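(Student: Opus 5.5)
The plan is to take a random vector and show that, with positive probability, it satisfies all $n$ inequalities $\abs{(Ax)_i}\ge \abs{x_i}$ at once. Let $x$ be a standard complex Gaussian vector in $\C^n$, with i.i.d.\ $\mathcal{CN}(0,1)$ coordinates. For each $i$ define the ``bad'' event
\[
E_i:=\{x\in\C^n:\ \abs{(Ax)_i}<\abs{x_i}\}=\{x:\ x^*M_ix<0\},\qquad M_i:=r_i^*r_i-e_ie_i^*,
\]
where $e_i$ is the $i$-th standard basis vector. Each $E_i$ is an open cone, and its complement is exactly where the $i$-th inequality holds. So it suffices to show that the closed set $\bigcap_i E_i^c$ contains a nonzero point.

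\textbf{Step 1: bound each $\PP(E_i)$.} The Hermitian matrix $M_i$ has rank at most $2$. Write $s_i:=\norm{r_i}_2^2$ and let $\lambda_+\ge 0\ge \lambda_-$ be its two possibly nonzero eigenvalues. The trace and the $2\times 2$ Gram determinant computed on $\operatorname{span}\{r_i^*,e_i\}$ give
\[
\lambda_++\lambda_-=s_i-1,\qquad \lambda_+\abs{\lambda_-}=s_i-\abs{a_{ii}}^2\le s_i .
\]
By unitary invariance, $x^*M_ix=\lambda_+\abs{g_1}^2+\lambda_-\abs{g_2}^2$, where $\abs{g_1}^2$ and $\abs{g_2}^2$ are independent standard exponential variables. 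Hence, with $\mu:=\abs{\lambda_-}$,
\[
\PP(E_i)=\frac{\mu}{\lambda_++\mu}=\frac{\mu}{2\mu+s_i-1}.
\]
This expression is increasing in $\mu$. Since $\mu(\mu+s_i-1)\le s_i$ and $s_i\ge1$, we get $\mu\le 1$, and therefore $\PP(E_i)\le 1/(1+s_i)$. This is exactly the quantity appearing in the hypothesis. The degenerate case $\mu=0$ is immediate, since then $\PP(E_i)=0$.

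\textbf{Step 2: union bound.} Summing and using the hypothesis gives $\sum_i\PP(E_i)\le\sum_i 1/(1+\norm{r_i}_2^2)\le 1$. If the sum is strictly less than $1$, then $\PP(\bigcap_i E_i^c)>0$. In that case a nonzero $x$ with $\abs{Ax}\ge\abs{x}$ exists, because $x\neq 0$ almost surely.

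\textbf{Step 3: the equality case, which is the main obstacle.} Suppose, for contradiction, that every nonzero $x$ lies in some $E_i$. Then the open sets $E_i$ cover $\C^n\setminus\{0\}$, and $\sum_i\PP(E_i)\le 1=\PP(\bigcup_i E_i)$. This forces $\PP(E_i\cap E_j)=0$ for all $i\ne j$. An intersection of two open sets with zero Gaussian measure must be empty, so the $E_i$ are pairwise disjoint open sets covering $\C^n\setminus\{0\}$. This punctured space is connected, since its real dimension is $2n\ge 2$. Hence a single $E_i$ would have to equal all of $\C^n\setminus\{0\}$, giving $\PP(E_i)=1$. That contradicts $\PP(E_i)\le 1/(1+s_i)\le 1/2$. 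Therefore some nonzero $x$ lies in $\bigcap_i E_i^c$, which means $\abs{Ax}\ge\abs{x}$.

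The delicate point is the eigenvalue computation in Step 1, in particular verifying the determinant identity for the rank-two form and the resulting bound $\mu\le 1$.
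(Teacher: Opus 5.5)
Your proposal is correct and follows the paper's proof essentially step by step. It uses the same Gaussian covering argument with the open sets $E_i$, the same rank-two eigenvalue analysis giving $\PP(E_i)\le 1/(1+\norm{r_i}_2^2)$ (your factorization $(\mu-1)(\mu+s_i)\le 0$ is a tidy substitute for the paper's explicit $\Delta$-formula), and the same equality-case argument via positivity of the Gaussian density and connectedness of $\C^n\setminus\{0\}$. The only cosmetic difference is at the end: you note that a single $E_i$ would have probability $1>1/2$, whereas the paper first shows that every $E_i$ is nonempty.
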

Similarly, we establish the existence of a nonzero vector satisfying the conclusion of Conjecture~\ref{conj:200} under different assumptions. Moreover, when \(n=2\), these assumptions are equivalent to those in Conjecture~\ref{conj:200}. That is,  Conjecture~\ref{conj:200} is true for $n=2$.
	\begin{thm}\label{thm:real-general}
		Let $A=(a_{ij})\in \R^{n\times n}$ with $n\ge 2$, and let $r_i$ denote the $i$-th row of $A$. Assume that
		\begin{equation*}
			\norm{r_i}_2\ge 1
			\qquad (i=1,\ldots,n),\quad
			\text{ and }\quad
			\sum_{i=1}^n \frac{2}{\pi}\arctan\!\Bigl(\frac{1}{\norm{r_i}_2}\Bigr)\le 1.
		\end{equation*}
		Then there exists a nonzero vector $x\in \R^n$ such that
		\[
		\abs{Ax}\ge \abs{x}.
		\]
	\end{thm}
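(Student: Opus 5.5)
The plan is a Gaussian (equivalently, rotation-invariant) measure argument: for each row we estimate the measure of the set of directions where that row fails, and then use a covering argument on $\R^n\setminus\{0\}$ to handle the borderline case in which the sum in the hypothesis equals $1$. For each $i$ define the \emph{bad set}
\[
B_i:=\{x\in\R^n\setminus\{0\}:\ \abs{(Ax)_i}<\abs{x_i}\}=\{x\neq 0:\ \abs{\langle r_i,x\rangle}<\abs{x_i}\}.
\]
Each $B_i$ is an open cone. Any $x\neq 0$ outside $\bigcup_i B_i$ satisfies $\abs{Ax}\ge\abs{x}$. So it suffices to show that $\bigcup_i B_i\neq \R^n\setminus\{0\}$.

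\textbf{Step 1: measure of each bad set.} Let $g$ be a standard Gaussian vector in $\R^n$. Write $R_i=\norm{r_i}_2\ge 1$, $V=g_i$ and $a=a_{ii}$. Then $\langle r_i,g\rangle=aV+bW$, where $b=\sqrt{R_i^2-a^2}\ge 0$ and $W$ is a standard Gaussian independent of $V$. The event $g\in B_i$ becomes $\abs{aV+bW}<\abs{V}$, which depends only on the direction of the planar Gaussian $(V,W)$.

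If $b=0$, then $\abs{a}=R_i\ge 1$ and the event has probability $0$.

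If $b>0$, set $s=W/V$. The event is $s\in(s_2,s_1)$ with $s_{1,2}=(\pm1-a)/b$, so it is a double cone of opening angle $\theta_i=\arctan s_1-\arctan s_2$. Its probability is $\theta_i/\pi$. We have
\[
s_1-s_2=\frac{2}{b},\qquad 1+s_1s_2=\frac{R_i^2-1}{b^2}\ge 0 .
\]
Hence $\tan\theta_i=2b/(R_i^2-1)\le 2R_i/(R_i^2-1)=\tan\bigl(2\arctan(1/R_i)\bigr)$, with $\theta_i=\pi/2$ when $R_i=1$. This gives
\[
\PP(g\in B_i)\le \tfrac{2}{\pi}\arctan(1/R_i)\le \tfrac12 .
\]

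\textbf{Step 2: union bound and the equality case.} By the hypothesis, $\sum_i\PP(g\in B_i)\le 1$. If $\PP(g\in\bigcup B_i)<1$, a good $x$ exists and we are done.

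Otherwise $\PP(g\in\bigcup B_i)=1=\sum_i\PP(g\in B_i)$. Inclusion–exclusion then forces $\PP(g\in B_i\cap B_j)=0$ for all $i\neq j$. Since $B_i\cap B_j$ is open and the Gaussian density is positive, each such intersection is empty.

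\textbf{Step 3: covering argument, the main obstacle.} Suppose for contradiction that $\bigcup_i B_i=\R^n\setminus\{0\}$. By Step 2 this is a cover by pairwise disjoint open sets. Because $n\ge 2$, the set $\R^n\setminus\{0\}$ is connected, so a single $B_i$ must equal all of $\R^n\setminus\{0\}$. That set would have probability $1$, contradicting $\PP(g\in B_i)\le 1/2$ from Step 1.

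Hence some $x\neq 0$ lies outside every $B_i$, and it satisfies $\abs{Ax}\ge\abs{x}$. The delicate points are checking the angle computation in Step 1 (the sign of $1+s_1s_2$, which uses $R_i\ge 1$) and passing from "measure zero" to "empty" via openness; connectedness is exactly where $n\ge 2$ enters.
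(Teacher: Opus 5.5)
Your proposal is correct and follows essentially the paper's argument: a Gaussian measure bound on the open bad cones $\{|(Az)_i|<|z_i|\}$, a union bound, the equality case forcing pairwise disjointness via openness and the positive Gaussian density, and then connectedness of $\R^n\setminus\{0\}$. The differences are only local. You compute each row's failure probability by a direct planar angle computation, writing $(Ag)_i=a_{ii}g_i+bW$, instead of diagonalizing $\Sigma^{1/2}J\Sigma^{1/2}$ as in Lemma~\ref{lem:real-scalar}; and you exclude a single set covering everything via $\PP(B_i)\le 1/2$, where the paper instead argues by strict inequality that every $E_i$ is nonempty.
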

\begin{rem}\label{rem:general-sharpness}
	The conditions $\norm{r_i}_2\ge 1$ cannot be removed in either
	Theorem~\ref{thm:complex-general} or Theorem~\ref{thm:real-general}.
	Indeed, for any $M>1$, the matrix
	\[
	A_M=
	\begin{pmatrix}
		0 & M\\
		0 & M^{-1}
	\end{pmatrix}\in \R^{2\times 2}\subset \C^{2\times 2}
	\]
	satisfies
	\[
	\norm{r_1}_2=M,\qquad \norm{r_2}_2=M^{-1},
	\]
	and
	\[
	\frac{1}{1+\norm{r_1}_2^2}+\frac{1}{1+\norm{r_2}_2^2}=1.
	\]
	Moreover, since $\arctan(M)+\arctan(M^{-1})=\pi/2$, we also have
	\[
	\frac{2}{\pi}\arctan\!\Bigl(\frac{1}{\norm{r_1}_2}\Bigr)
	+
	\frac{2}{\pi}\arctan\!\Bigl(\frac{1}{\norm{r_2}_2}\Bigr)
	=1.
	\]
	However, there is no nonzero vector $x$ such that $\abs{A_Mx}\ge \abs{x}$.
	Indeed, if $x=(x_1,x_2)^T$ and $\abs{A_Mx}\ge \abs{x}$, then
	\[
	M^{-1}|x_2|=|(A_Mx)_2|\ge |x_2|,
	\]
	so $x_2=0$, and then
	\[
	0=|(A_Mx)_1|\ge |x_1|,
	\]
	hence $x_1=0$.
\end{rem}
Finally, in view of the numerical evidence, we propose the following real analogue of Theorem~\ref{thm:complex-general}. This conjecture may be regarded as a stronger variant of Conjecture~\ref{conj:200} when $n\ge 3$.
\begin{conj}
	Let $A=(a_{ij})\in \R^{n\times n}$ with $n\ge 2$, and let $r_i$ denote the $i$-th row of $A$. Assume that
	\begin{equation*}
		\norm{r_i}_2\ge 1
		\qquad (i=1,\ldots,n),\quad
		\text{ and }\quad
		\sum_{i=1}^n \frac{1}{1+\norm{r_i}_2^2}\le 1.
	\end{equation*}
	Then there exists a nonzero vector $x\in \R^n$ such that
	\[
	\abs{Ax}\ge \abs{x}.
	\]
\end{conj}
\medskip
\noindent\textbf{Organization of the paper.}
Section~\ref{sec:Ball} recalls a finite-dimensional form of Ball's plank theorem
and derives a weighted affine consequence in Banach spaces.
Section~\ref{sec:main} proves Theorem~\ref{thm:mainp}; from it we derive
Corollary~\ref{cor:cube}, while the proof of the global comparison
Theorem~\ref{thm:global} is deferred to Appendix~\ref{sec:proof-compare}.
Section~\ref{sec:distance} applies Theorem~\ref{thm:mainp} to the weighted
componentwise distance to singularity and obtains a sharp improvement of
Rump's estimate in \cite{Rum99}. Section~\ref{sec:BS24} records several
consequences for recent results of B\"unger and Seeger in \cite{BS24}.
Section~\ref{sec:euclid} studies the weaker Euclidean row condition, proving
Theorem~\ref{thm:euclid-sine} and giving related counterexamples.
Section~\ref{sec:gaussian} collects the Gaussian facts needed later, and
Section~\ref{sec:general-proof} proves Theorems~\ref{thm:complex-general}
and~\ref{thm:real-general}. Appendix~\ref{app:gaussian} contains routine
background on real and complex Gaussian variables. Finally,
Section~\ref{sec:further} contains further remarks on
Conjecture~\ref{conj:200}, including elementary planar results.

	\section{Ball's plank theorem and a weighted affine consequence}\label{sec:Ball}
	
	Let $X$ be a finite-dimensional real Banach space with norm $\norm{\cdot}_X$, and let $X^*$ denote its dual space, that is, the space of all continuous linear functionals on $X$, endowed with the dual norm
	\[
	\norm{f}_{X^*}:=\sup\{|f(x)|:\ x\in X,\ \norm{x}_X\le 1\}.
	\]
	We write
	\[
	B_X:=\{x\in X:\norm{x}_X\le 1\}
	\]
	for the closed unit ball of $X$.
	
	We use the following finite-dimensional form of Ball's plank theorem \cite[Theorem~2]{Bal91}.
	
	\begin{thm}[Ball]\label{thm:Ball}
		Let $X$ be a finite-dimensional real Banach space, and suppose that $\varphi_1,\dots,\varphi_m\in X^*$ satisfy $\norm{\varphi_i}_{X^*}=1$ for every $i$, that $b_1,\dots,b_m\in\R$, and that $w_1,\dots,w_m>0$ satisfy
		\[
		\sum_{i=1}^m w_i=1.
		\]
		Then there exists $x\in B_X$ such that
		\[
		\abs{\varphi_i(x)-b_i}\ge w_i
		\qquad (i=1,\dots,m).
		\]
	\end{thm}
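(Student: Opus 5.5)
Since Theorem~\ref{thm:Ball} is quoted from \cite[Theorem~2]{Bal91}, in the paper one may simply invoke it. If I had to prove it, I would follow Ball's reduction of the Banach-space statement to a matrix inequality with unit diagonal. By Hahn--Banach (finite dimension, compactness of $B_X$) I would first choose for each $i$ a vector $u_i\in B_X$ with $\varphi_i(u_i)=1$. I would then look for the point $x$ inside the symmetric convex hull of $u_1,\dots,u_m$:
\[
x=\sum_{j=1}^m \lambda_j u_j,\qquad \sum_{j=1}^m\abs{\lambda_j}\le 1 .
\]
Such an $x$ automatically lies in $B_X$. Setting $H=(h_{ij})$ with $h_{ij}:=\varphi_i(u_j)$, we have $h_{ii}=1$ and $\abs{h_{ij}}\le 1$. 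The problem then becomes purely finite-dimensional: find $\lambda$ in the $\ell_1$-ball of $\R^m$ with $\abs{(H\lambda)_i-b_i}\ge w_i$ for all $i$.

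For this matrix problem I would use a variational argument. On the compact set $\{\lambda:\sum_j\abs{\lambda_j}\le1\}$, maximize
\[
G(\lambda)=\sum_{i=1}^m w_i\log\abs{(H\lambda)_i-b_i}.
\]
This is not identically $-\infty$: a generic $\lambda$ makes every $s_i:=(H\lambda)_i-b_i$ nonzero. At a maximizer $\lambda^\ast$ all $s_i\neq0$. I would compare $\lambda^\ast$ with the competitors $(1-t)\lambda^\ast+t\,\sigma e_k$ for small $t>0$, where $\sigma=\sgn(s_k)$. These remain admissible, so the one-sided derivative of $G$ at $t=0$ must be $\le 0$, giving
\[
\sum_i \frac{w_i}{s_i}\bigl(\sigma h_{ik}-(H\lambda^\ast)_i\bigr)\le 0 .
\]
The diagonal term contributes $w_k/\abs{s_k}$. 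The off-diagonal terms are bounded using $\abs{h_{ik}}\le 1$, and the remaining sum $\sum_i w_i (H\lambda^\ast)_i/s_i$ is handled through $(H\lambda^\ast)_i=s_i+b_i$ together with the normalization $\sum_i w_i=1$. Combining these, the goal is to conclude $w_k/\abs{s_k}\le 1$, that is, $\abs{s_k}\ge w_k$, for every $k$.

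The main obstacle is the sign bookkeeping in this last step. A naive bound leaves terms $\sum_i w_i b_i/s_i$ and the cross terms $\sum_{i\ne k} w_i h_{ik}/s_i$ of indeterminate sign. I would first try adding the inequalities over $k$ with weights $\abs{\lambda^\ast_k}$, and also using the sign choice $\sigma=\pm1$ freely. If that does not close, I would fall back on Ball's original device of a suitably weighted functional, possibly replacing $\log$ by a different concave weighting, so that the first-order conditions yield exactly $\abs{s_k}\ge w_k$. In the symmetric case $b=0$ I expect this to be straightforward, and the affine shifts $b_i$ to be where the real difficulty lies.
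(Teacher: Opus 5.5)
Invoking Ball's theorem (Theorem~2 of his 1991 paper) is exactly what the paper does. It gives no proof of Theorem~\ref{thm:Ball}, so that part of your proposal is fine. Your self-contained sketch, however, has a genuine gap, and it is not a matter of sign bookkeeping.

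The reduction is correct but buys nothing. The matrix problem is precisely the statement for $X=\ell_1^m$ with the rows of $H$ as functionals, each of dual norm $\max_j\abs{h_{ij}}=1$. So the whole difficulty survives it.

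The variational step then fails outright: a maximizer of $G$ need not be an escape point, even when $b=0$. Take $X=\ell_1^2$, $\varphi_1=(1,\tfrac1{10})$, $\varphi_2=(\tfrac1{10},1)$, $\varphi_3=(1,-1)$, $b=0$, $w=(\tfrac{12}{25},\tfrac{12}{25},\tfrac1{25})$. The function $\prod_i\abs{\varphi_i(x)}^{w_i}$ is even and positively $1$-homogeneous. So it is maximized on one of the edges $(t,1-t)$ or $(t,t-1)$, $0\le t\le1$.
\begin{itemize}
\item On the first edge, with $u=2t-1$, it equals $\bigl(\tfrac{121-81u^2}{400}\bigr)^{12/25}\abs{u}^{1/25}$. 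Its maximum is at $\abs{u}=\tfrac{11}{45}$, with value about $e^{-0.650}$.
\item On the second edge it is at most $(\tfrac{81}{400})^{12/25}\approx e^{-0.767}$.
\end{itemize}
Hence the maximizers are $\pm(\tfrac{28}{45},\tfrac{17}{45})$ and $\pm(\tfrac{17}{45},\tfrac{28}{45})$. For your $G$ on the $\ell_1^3$-ball, with any choice of norming vectors, the maximizers are their preimages. At $(\tfrac{28}{45},\tfrac{17}{45})$ one has $s=(\tfrac{33}{50},\tfrac{11}{25},\tfrac{11}{45})$, so $\abs{s_2}=\tfrac{11}{25}<\tfrac{12}{25}=w_2$. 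Escape points do exist, e.g.\ $(\tfrac{11}{20},\tfrac{9}{20})$, but the maximizer is not one of them.

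With $u_1=u_3=e_1$ and $u_2=e_2$, your first-order condition for $k=2$ reads $\tfrac{w_2}{s_2}+\tfrac{w_1h_{12}}{s_1}+\tfrac{w_3h_{32}}{s_3}\le 1$. It holds with equality: $\tfrac{60}{55}+\tfrac{4}{55}-\tfrac{9}{55}=1$. The cross term from $\varphi_3$, where $h_{32}=-1$, exactly absorbs the excess $w_2/\abs{s_2}-1$. Since the desired conclusion is false at this point, no averaging over $k$, choice of $\sigma$, or other combination of valid inequalities at $\lambda^\ast$ can yield $\abs{s_k}\ge w_k$.

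With affine shifts the method fails already in dimension one. Take $X=\R$, $\varphi_1=\varphi_2=\mathrm{id}$, $b=(\tfrac35,-\tfrac35)$, $w=(\tfrac12,\tfrac12)$. Then $G=\tfrac12\log\abs{x^2-\tfrac{9}{25}}$ is maximized at $x=\pm1$, where $\min_i\abs{x-b_i}=\tfrac25<\tfrac12$, while $x=0$ escapes.

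So the log-product extremal point is the wrong object. Your fallback (``Ball's original device'', ``a different concave weighting'') is precisely the missing idea, and it is left unspecified. For this paper the citation is the right course. A self-contained proof would have to reproduce Ball's actual argument rather than this variational scheme.
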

	
	The following renormalized form will be used in the proof of Theorem~\ref{thm:mainp}.
	
	\begin{cor}\label{cor:affine}
		Let $X$ be a finite-dimensional real Banach space, and suppose that $f_1,\dots,f_m\in X^*\setminus\{0\}$, that $c_1,\dots,c_m\in\R$, and that $\lambda_1,\dots,\lambda_m>0$ satisfy
		\[
		\sum_{i=1}^m \frac{\lambda_i}{\norm{f_i}_{X^*}}\le 1.
		\]
		Then there exists $x\in B_X$ such that
		\[
		\abs{f_i(x)-c_i}\ge \lambda_i
		\qquad (i=1,\dots,m).
		\]
	\end{cor}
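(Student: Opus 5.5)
The plan is to normalize the functionals and redistribute the slack in the weight condition, then invoke Theorem~\ref{thm:Ball} directly. Set $\varphi_i:=f_i/\norm{f_i}_{X^*}$, so that $\norm{\varphi_i}_{X^*}=1$, and put $b_i:=c_i/\norm{f_i}_{X^*}$. The target inequality $\abs{f_i(x)-c_i}\ge\lambda_i$ is then equivalent to
\[
\abs{\varphi_i(x)-b_i}\ge \frac{\lambda_i}{\norm{f_i}_{X^*}}=:u_i .
\]
By hypothesis the $u_i$ are positive and satisfy $s:=\sum_{i=1}^m u_i\le 1$.

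Since Theorem~\ref{thm:Ball} requires weights summing exactly to $1$, we inflate them. The simplest choice is $w_i:=u_i/s$. These are positive, satisfy $\sum_i w_i=1$, and $w_i\ge u_i$ because $s\le 1$. Applying Theorem~\ref{thm:Ball} to $\varphi_i$, $b_i$, $w_i$ yields $x\in B_X$ with $\abs{\varphi_i(x)-b_i}\ge w_i\ge u_i$ for every $i$. Multiplying by $\norm{f_i}_{X^*}$ gives $\abs{f_i(x)-c_i}\ge\lambda_i$, as required.

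There is no real obstacle here; the only points to check are that $\norm{f_i}_{X^*}>0$, which holds since $f_i\neq0$, so the normalization is legitimate, and that the rescaling of the weights only helps because $s\le1$.
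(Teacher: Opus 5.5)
Your proof is correct and follows the same route as the paper: normalize the functionals, then reduce to Theorem~\ref{thm:Ball}. The one difference is how the slack $s:=\sum_i u_i<1$ is absorbed. The paper appends an auxiliary plank, namely a norm-one $\psi$ with $b_{m+1}=0$ and weight $1-s$. You instead rescale to $w_i:=u_i/s\ge u_i$, which avoids the extra functional and even gives the slightly stronger bound $\abs{f_i(x)-c_i}\ge \lambda_i/s$.
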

	
	\begin{proof}
		Set
		\[
		\varphi_i:=\frac{f_i}{\norm{f_i}_{X^*}},
		\qquad
		b_i:=\frac{c_i}{\norm{f_i}_{X^*}},
		\qquad
		w_i:=\frac{\lambda_i}{\norm{f_i}_{X^*}}
		\qquad (i=1,\dots,m).
		\]
		Then $\norm{\varphi_i}_{X^*}=1$ and $w_i>0$ for all $i$. If $\sum_{i=1}^m w_i=1$, the conclusion follows directly from Theorem~\ref{thm:Ball}.
		
		Assume now that $\sum_{i=1}^m w_i<1$. Since every $f_i$ is nonzero, the space $X$ is nontrivial. Choose $\psi\in X^*$ with $\norm{\psi}_{X^*}=1$ and define
		\[
		\varphi_{m+1}:=\psi,
		\qquad
		b_{m+1}:=0,
		\qquad
		w_{m+1}:=1-\sum_{i=1}^m w_i.
		\]
		Then $w_{m+1}>0$ and $\sum_{i=1}^{m+1}w_i=1$. Applying Theorem~\ref{thm:Ball} to $\varphi_1,\dots,\varphi_{m+1}$ yields $x\in B_X$ such that
		\[
		\abs{\varphi_i(x)-b_i}\ge w_i
		\qquad (i=1,\dots,m+1).
		\]
		For $i\le m$ this is exactly
		\[
		\abs{f_i(x)-c_i}\ge \lambda_i.
		\]
	\end{proof}
	
	\section{Proof of Theorem~\ref{thm:mainp}}\label{sec:main}
	
	In this section, we prove  Theorem~\ref{thm:mainp}  by using Corollary~\ref{cor:affine}.
	\begin{proof}[Proof of Theorem~\ref{thm:mainp}]
		Let $X=(\K^n,\norm{\cdot}_p)$, regarded as a real Banach space.
		For each $i=1,\dots,m$, define the $\K$-linear row functional
		\[
		f_i(z):=(Az)_i=\sum_{j=1}^n a_{ij}z_j
		\qquad (z\in\K^n),
		\]
		and let
		\[
		g_i(z):=\operatorname{Re} f_i(z)
		\qquad (z\in\K^n).
		\]
		Then $g_i$ is a real-linear functional on $X$.
		
		By the usual $\ell_p$--$\ell_q$ duality over $\K$, for each $i=1,\ldots, m$,
		\[
		\norm{f_i}=\norm{r_i}_q.
		\]
		We claim that also
		\[
		\norm{g_i}=\norm{r_i}_q.
		\]
		Indeed, for every $z\in\K^n$,
		\[
		|g_i(z)|=|\operatorname{Re} f_i(z)|\le |f_i(z)|\le \norm{f_i}\,\norm{z}_p,
		\]
		so $\norm{g_i}\le \norm{f_i}$.
		
		For the reverse inequality, let $z\in\K^n$ with $\norm{z}_p\le 1$ and $f_i(z)\neq 0$, and set
		\[
		\omega:=\frac{\overline{f_i(z)}}{|f_i(z)|}.
		\]
		Then $|\omega|=1$, hence $\norm{\omega z}_p=\norm{z}_p\le 1$, and
		\[
		g_i(\omega z)=\operatorname{Re}\bigl(f_i(\omega z)\bigr)
		=\operatorname{Re}\bigl(\omega f_i(z)\bigr)
		=|f_i(z)|.
		\]
		Therefore
		\[
		\norm{g_i}\ge |f_i(z)|.
		\]
		Taking the supremum over all $z$ with $\norm{z}_p\le 1$ yields
		\[
		\norm{g_i}\ge \norm{f_i}.
		\]
		Hence
		\[
		\norm{g_i}=\norm{f_i}=\norm{r_i}_q.
		\]
		
		Now set $\lambda_i:=t$ for all $i$. Since $\norm{r_i}_q\ge mt$, we have
		\[
		\sum_{i=1}^m \frac{\lambda_i}{\norm{g_i}}
		=
		\sum_{i=1}^m \frac{t}{\norm{r_i}_q}
		\le
		\sum_{i=1}^m \frac{t}{mt}
		=1.
		\]
		Applying Corollary~\ref{cor:affine} to the real Banach space $X$, with the real functionals $g_i$, with $c_i=0$, and with $\lambda_i=t$, we obtain a vector $x\in X=\K^n$ such that $\norm{x}_p\le 1$ and
		\[
		|g_i(x)|\ge t
		\qquad (i=1,\dots,m).
		\]
		Hence
		\[
		|(Ax)_i|
		=
		|f_i(x)|
		\ge
		|\operatorname{Re} f_i(x)|
		=
		|g_i(x)|
		\ge t
		\qquad (i=1,\dots,m),
		\]
		that is,
		\[
		|Ax|\ge t\,e.
		\]
		
		Moreover, $x\neq 0$, since otherwise $|Ax|=0$, contradicting $|Ax|\ge t\,e$ with $t>0$.
	\end{proof}

	\section{Weighted componentwise distance to singularity}\label{sec:distance}

	In this section, we explain how Theorem~\ref{thm:mainp} yields a sharp improvement of a classical result of Rump on the weighted componentwise distance to singularity \cite{Rum99}.
	
	The problem belongs to the broader framework of perturbation analysis for matrix inversion. In the classical normwise setting, if $A\in\R^{n\times n}$ is nonsingular and $\norm{\cdot}$ is a matrix norm subordinate to a vector norm, then the normwise distance from $A$ to the set of singular matrices is exactly the reciprocal of the condition number
	\[
	\kappa(A):=\norm{A}\,\norm{A^{-1}},
	\]
	namely,
	\[
	\min\bigl\{\alpha\ge0:\ \exists\,\widetilde A\in\R^{n\times n}\ \text{with }\norm{\widetilde A-A}\le \alpha \norm{A}
	\ \text{and}\ \widetilde A\ \text{singular}\bigr\}
	=
	\frac{1}{\kappa(A)};
	\]
	see, for example, \cite[Theorem~III.2.8]{SS90}. Thus, in the normwise theory, ``ill-conditioned'' and ``nearly singular'' are equivalent notions.
	
	For componentwise perturbations, however, the normwise condition number is no longer the most natural quantity, because it does not adequately reflect entrywise relative errors and is highly sensitive to diagonal scalings. This leads to the Bauer--Skeel approach to componentwise perturbation analysis \cite{Bau66,Ske79}; see also \cite{Dem92,Hig96,Roh89}. Given a nonsingular matrix $A\in\R^{n\times n}$ and a nonnegative weight matrix $E\in\R_{\ge0}^{n\times n}$, the quantity
	\[
	\||A^{-1}|E\|
	\]
	arises as the corresponding Bauer--Skeel type condition number for a fixed subordinate norm, while its scale-invariant refinement 	leads to the Bauer--Skeel condition number
	\[
	\operatorname{cond}(A,E):=\rho\!\bigl(|A^{-1}|E\bigr).
	\]
	This number is independent of diagonal row and column scalings and, in the important case of relative perturbations $E=|A|$, coincides with the minimum attainable $\ell_\infty$-condition number under two-sided diagonal scalings \cite{Bau63,Dem92,Rum99}. It is therefore natural to ask whether the reciprocal of $\rho(|A^{-1}|E)$ also controls the distance from $A$ to singularity when perturbations are measured componentwise.
	
	This question was formulated explicitly by Demmel for relative perturbations and was emphasized by Higham as a particularly natural conjecture \cite{Dem92,Hig96}. It was settled by Rump through a line of work on componentwise distance to singularity and sign-real spectral radii \cite{Rum97a,Rum97c,Rum99}. The resulting comparison is especially valuable because the exact computation of weighted componentwise distance is NP-hard in general \cite{PR93}, so one cannot expect a simple closed formula or an efficient exact algorithm.

	Let $A\in\R^{n\times n}$ be nonsingular, and let $E\in\R_{\ge0}^{n\times n}$ be a nonnegative weight matrix. Following Rump \cite{Rum99}, the weighted componentwise distance from $A$ to the set of singular matrices is defined by
	\[
	\sigma(A,E):=
	\min\bigl\{\alpha\ge0:\ \exists\,\widetilde E\in\R^{n\times n}\ \text{with}\ |\widetilde E|\le \alpha E
	\ \text{and}\ A+\widetilde E\ \text{singular}\bigr\},
	\]
	with the convention $\sigma(A,E):=\infty$ if no such $\alpha$ exists. The weight matrix $E$ prescribes the admissible entrywise size of the perturbation and thus provides a versatile way to model various entrywise perturbation patterns. For instance, the choice $E=|A|$ corresponds to componentwise relative perturbations, while $E=ee^T$ corresponds to entrywise absolute perturbations. More generally, specific matrix structures may be incorporated by setting selected entries of $E$ to zero, thereby forbidding perturbations in those positions and preserving the prescribed structure, such as tridiagonal structure and related sparsity patterns. A compactness argument shows that the minimum in the definition is attained whenever $\sigma(A,E)<\infty$ \cite{Rum99}.

	The quantity $\sigma(A,E)$ is the componentwise analogue of the normwise distance to singularity. In the framework of weighted componentwise perturbation analysis, one is led to compare it with
	\[
	\operatorname{cond}(A,E)=\rho\!\bigl(|A^{-1}|E\bigr).
	\]
	Rump answered this question in the affirmative by proving that
	\begin{equation}\label{eq:Rum99-bound}
		\frac{1}{\rho(|A^{-1}|E)}
		\le
		\sigma(A,E)
		<
		\frac{(3+2\sqrt2)n}{\rho(|A^{-1}|E)}.
	\end{equation}
	See \cite[eq.~(4) and Proposition~5.1]{Rum99}. The lower bound is classical and sharp, whereas the upper bound is the genuinely difficult part. Thus Rump showed that, even in the componentwise setting, ill-conditioning still forces proximity to singularity, up to a universal constant times the dimension.
	
	A central ingredient in Rump's proof is a characterization of $\sigma(A,E)$ in terms of the sign-real spectral radius. More precisely, writing $\rho_{\R}$ for the sign-real spectral radius, Rump proved
	\begin{equation}\label{eq:Rum99-characterization}
		\sigma(A,E)^{-1}
		=
		\max_{|\widehat E|\le E}\rho_{\R}(A^{-1}\widehat E),
	\end{equation}
	which is the form of his characterization most convenient for our purposes; see \cite[eq.~(9)]{Rum99}. This identity reduces the problem of bounding $\sigma(A,E)$ from above to the problem of finding lower bounds on $\rho_{\R}$ for suitable matrices. In \cite{Rum99}, those lower bounds were obtained from cycle-product estimates for the sign-real spectral radius; see \cite[Theorem~4.4 and Proposition~5.1]{Rum99}. In this way, the upper estimate in \eqref{eq:Rum99-bound} becomes an application of the Perron--Frobenius type theory for matrices without sign restrictions developed in \cite{Rum97b}. 
	
	For $\mu=\{i_1<\cdots<i_k\}\subseteq\{1,\dots,n\}$ and a matrix
	$M=(m_{ij})\in\R^{n\times n}$, we write
	\[
	M[\mu,:]
	:=
	\begin{pmatrix}
		m_{i_1 1} & \cdots & m_{i_1 n}\\
		\vdots & & \vdots\\
		m_{i_k 1} & \cdots & m_{i_k n}
	\end{pmatrix}
	\in \R^{k\times n},
	\]
	that is, $M[\mu,:]$ denotes the submatrix of $M$ formed by the rows indexed by $\mu$ and all columns. For a vector $z=(z_1,\dots,z_n)^T\in\R^n$, we write
	\[
	z_\mu:=(z_{i_1},\dots,z_{i_k})^T\in\R^k
	\]
	for the subvector of $z$ consisting of the entries indexed by $\mu$.
	
	Theorem~\ref{thm:mainp} allows one to revisit this argument from a different angle. Instead of using cycle products, we apply the rectangular $\ell_\infty$-escape theorem to a matrix naturally associated with a Perron vector of $|A^{-1}|E$. This leads to the following sharpening of Rump's upper bound.
	
	\begin{prop}\label{prop:weighted-distance-improvement}
		Let $A\in\R^{n\times n}$ be nonsingular, let $E\in\R_{\ge0}^{n\times n}$, and set
		\[
		B:=|A^{-1}|E,
		\qquad
		r:=\rho(B).
		\]
		Assume that $r>0$, and define
		\[
		k_*:=\min\Bigl\{|\operatorname{supp}z|:\ z\in\R_{\ge0}^n\setminus\{0\},\ Bz=rz\Bigr\},
		\]
		where
		$
		\operatorname{supp} z:=\{\,i\in\{1,\dots,n\}: z_i\neq 0\,\}
		$
		denotes the support of $z$. Then
		\[
		\sigma(A,E)
		\le
		\frac{k_*}{\rho(|A^{-1}|E)}
		\le
		\frac{n}{\rho(|A^{-1}|E)}.
		\]
		Moreover,  both inequalities are sharp.
	\end{prop}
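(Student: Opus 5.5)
The plan is to combine Rump's characterization \eqref{eq:Rum99-characterization} with the Collatz--Wielandt formula \eqref{eq:intro-cw} and the rectangular $\ell_\infty$ case of Theorem~\ref{thm:mainp}. By \eqref{eq:Rum99-characterization}, it suffices to exhibit some $\widehat E$ with $|\widehat E|\le E$ and $\rho_{\R}(A^{-1}\widehat E)\ge r/k_*$. By \eqref{eq:intro-cw}, that in turn follows from a nonzero $x$ with $|(A^{-1}\widehat E x)_i|\ge (r/k_*)|x_i|$ for every $i$ with $x_i\neq 0$. The second inequality $k_*\le n$ is trivial.

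Write $M=A^{-1}=(m_{il})$. Fix a nonnegative Perron vector $z$ of $B$ with support $\mu$ of size $k=k_*$, and set $w:=Ez\ge 0$. I will take $x:=z$ and $\widehat E:=\diag(s)E$ for a sign-like vector $s\in[-1,1]^n$ still to be chosen. Then $|\widehat E|\le E$ automatically, and $\widehat E z=\diag(s)w$, so
\[
(M\widehat E z)_i=\sum_{l=1}^n m_{il}w_l s_l .
\]
Only the rows $i\in\mu$ need to be controlled. Define $C\in\R^{k\times n}$ with rows $c_i:=(m_{il}w_l/z_i)_{l=1}^n$ for $i\in\mu$. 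Their $\ell_1$-norms are
\[
\frac{1}{z_i}\sum_l |m_{il}|(Ez)_l=\frac{(Bz)_i}{z_i}=r=k\cdot\frac{r}{k}.
\]
Theorem~\ref{thm:mainp} with $p=\infty$, $q=1$, $m=k$, $t=r/k$ then gives $s$ with $\norm{s}_\infty\le1$ and $|Cs|\ge (r/k)e$. This is exactly $|(M\widehat E z)_i|\ge (r/k)z_i$ for all $i\in\mu$. Hence $\rho_{\R}(M\widehat E)\ge r/k_*$, and so $\sigma(A,E)\le k_*/r$. The point of using $k_*$ rather than $n$ is that Theorem~\ref{thm:mainp} only charges for the $k$ rows in the support.

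For sharpness of the first inequality, a diagonal example should suffice: $A=I$ and $E$ diagonal give $k_*=1$ and $\sigma(A,E)=1/r$, matching the classical lower bound. For sharpness of the $n$-bound (and of the $k_*$-bound with $k_*=n$), I would try matrices built from the Rump matrix $S_n$ in \eqref{eq:Sn}. For instance, choose $A$ and $E$ so that the candidates $A^{-1}\widehat E$ are essentially scaled copies of $S_n$ (or of $|S_n|$-supported sign patterns), whose sign-real spectral radius is known from \cite[Lemma~5.6]{Rum97b} to be $1$. Then compute $\sigma$ via \eqref{eq:Rum99-characterization} and compare it with $\rho(|A^{-1}|E)$, which for the irreducible pattern $ee^T-I$ is $n-1$ up to scaling.

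I expect the sharpness of the $n$ bound to be the main obstacle. The naive $S_n$ example gives a ratio of $n-1$ rather than $n$, so I would first try perturbing it, for example by adding a suitable diagonal or taking the normalized $A_n=\frac{n}{n-1}S_n$ as the relevant product. Failing that, I would aim for a family where $\sigma(A,E)\,\rho(|A^{-1}|E)/n\to1$, and interpret ``sharp'' asymptotically.
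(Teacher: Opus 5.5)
Your proof of $\sigma(A,E)\le k_*/\rho(\abs{A^{-1}}E)$ is the paper's argument. Your $C$ is its $T=D_z^{-1}M[\mu,:]D_u$, and your $\widehat E=\diag(s)E$ is its $\widetilde E=D_xE$. Theorem~\ref{thm:mainp} with $p=\infty$, $m=k_*$, $t=r/k_*$, followed by \eqref{eq:intro-cw} and \eqref{eq:Rum99-characterization}, closes it. Your diagonal example ($A=I$, $E$ diagonal, so $k_*=1$ and $\sigma(I,E)=1/r$) correctly shows that the first inequality can hold with equality. The paper instead gets sharpness of both inequalities from a single example with $k_*=n$.

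\emph{The gap is the sharpness of the second inequality.} You never produce a pair $(A,E)$ with $\sigma(A,E)\,\rho(\abs{A^{-1}}E)=n$. Retreating to an asymptotic reading of ``sharp'' would not prove the proposition as stated. The paper cites Rump's examples \cite[eqs.~(25)--(26)]{Rum99}; the matrix $S_n$ is not the right object here. The following self-contained family works for every $n\ge 2$:
\begin{itemize}
\item[] Let $P$ be the cyclic permutation matrix ($P_{i,i+1}=P_{n,1}=1$) and $Q:=\diag(s_1,\dots,s_n)P$ with $s_i\in\{\pm1\}$ and $\prod_i s_i=(-1)^{n+1}$, so that $Q^n=(-1)^{n+1}I$. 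Take $A:=I+Q$ and $E:=\abs{A}=I+P$.
\item[] Telescoping gives $A^{-1}=\tfrac12\sum_{k=0}^{n-1}(-Q)^k$. Hence $\abs{A^{-1}}=\tfrac12 ee^T$, $\abs{A^{-1}}\abs{A}=ee^T$, $\rho=n$ and $k_*=n$.
\item[] Any perturbation with $\abs{\widetilde E}\le\alpha\abs{A}$ has the form $A+\widetilde E=\diag(1+\delta_i)+\diag\bigl(s_i(1+\epsilon_i)\bigr)P$ with $\abs{\delta_i},\abs{\epsilon_i}\le\alpha$. Only the identity and the $n$-cycle contribute to the determinant, and the sign $(-1)^{n-1}$ of the $n$-cycle cancels $\prod_i s_i$. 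So $\det(A+\widetilde E)=\prod_i(1+\delta_i)+\prod_i(1+\epsilon_i)$.
\item[] This determinant is positive for $\alpha<1$ and vanishes for $\delta_1=\epsilon_1=-1$. Hence $\sigma(A,\abs{A})=1=n/\rho(\abs{A^{-1}}\abs{A})$, which forces equality in both inequalities.
\end{itemize}
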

	
	\begin{proof}
		Since $B$ is nonnegative and $r=\rho(B)>0$, the Perron--Frobenius theorem guarantees that the set
		\[
		\Bigl\{|\operatorname{supp}z|:\ z\in\R_{\ge0}^n\setminus\{0\},\ Bz=rz\Bigr\}
		\]
		is nonempty. Let $z\in\R_{\ge0}^n\setminus\{0\}$ satisfy
		\[
		Bz=rz
		\]
		and
		\[
		|\operatorname{supp}z|=k_*.
		\]
		Write
		\[
		\mu:=\operatorname{supp}z=\{i_1<\cdots<i_k\},
		\qquad
		k:=|\mu|=k_*,
		\qquad
		u:=Ez.
		\]
		Since $r>0$, we have $k\ge 1$. Moreover, $z_i>0$ for every $i\in\mu$.
		
		Set $M:=A^{-1}$ and define
		\[
		T:=D_z^{-1}M[\mu,:]D_u\in\R^{k\times n},
		\]
		where
		\[
		D_z:=\diag(z_{i_1},\dots,z_{i_k}),
		\qquad
		D_u:=\diag(u_1,\dots,u_n).
		\]
		Since $z_i>0$ for all $i\in\mu$, the diagonal matrix $D_z$ is invertible.
		
		For each $\ell=1,\dots,k$, let $i_\ell$ be the corresponding element of $\mu$. Then the $\ell$-th row of $T$ satisfies
		\[
		\|T_{\ell\bullet}\|_1
		=
		\frac1{z_{i_\ell}}\sum_{j=1}^n |m_{i_\ell j}|u_j
		=
		\frac{(|M|u)_{i_\ell}}{z_{i_\ell}}
		=
		\frac{(Bz)_{i_\ell}}{z_{i_\ell}}
		=
		r.
		\]
		Therefore Theorem~\ref{thm:mainp}, applied to the matrix $T$ with $p=\infty$, $m=k$, and
		\[
		t=\frac{r}{k},
		\]
		yields a vector $x\in\R^n\setminus\{0\}$ such that
		\[
		\|x\|_\infty\le 1
		\qquad\text{and}\qquad
		|Tx|\ge \frac{r}{k}e.
		\]
		
		Now define
		\[
		\widetilde E:=D_xE,
		\qquad
		D_x:=\diag(x_1,\dots,x_n).
		\]
		Since $\|x\|_\infty\le 1$, we have $|D_x|\le I$, and hence
		\[
		|\widetilde E|\le E.
		\]
		Moreover,
		\[
		\widetilde E z
		=
		D_xEz
		=
		D_xu
		=
		D_u x.
		\]
		Hence
		\[
		|D_z^{-1}M[\mu,:]\widetilde E z|
		=
		|Tx|
		\ge
		\frac{r}{k}e,
		\]
		and therefore
		\[
		|M[\mu,:]\widetilde E z|
		\ge
		\frac{r}{k}z_\mu.
		\]
		Equivalently,
		\[
		|(M\widetilde E z)_i|
		\ge
		\frac{r}{k}z_i
		\qquad (i\in\mu).
		\]
		If $i\notin\mu$, then $z_i=0$, so the same inequality holds trivially. Consequently,
		\[
		|A^{-1}\widetilde E z|
		=
		|M\widetilde E z|
		\ge
		\frac{r}{k}z.
		\]
		
		By Rump's Collatz--Wielandt type characterization of the sign-real spectral radius \eqref{eq:intro-cw},
		\[
		\rho_{\R}(A^{-1}\widetilde E)\ge \frac{r}{k}.
		\]
		On the other hand, Rump's characterization \eqref{eq:Rum99-characterization} implies
		\[
		\sigma(A,E)^{-1}
		=
		\max_{|\widehat E|\le E}\rho_{\R}(A^{-1}\widehat E)
		\ge
		\rho_{\R}(A^{-1}\widetilde E)
		\ge
		\frac{r}{k}.
		\]
		Equivalently,
		\[
		\sigma(A,E)\le \frac{k}{r}
		=
		\frac{k_*}{\rho(|A^{-1}|E)}.
		\]
		The weaker estimate with $n$ follows from $k_*\le n$.
		
		It remains to discuss sharpness. Rump \cite[eqs.~(25)--(26)]{Rum99} exhibited matrices for which
		\[
		\sigma(A,|A|)=\frac{n}{\rho(|A^{-1}||A|)}.
		\]
 For these examples one has $k_*=n$, so equality in the second estimate yields equality in the first as well. Hence both inequalities are sharp.
	\end{proof}	
\begin{cor}\label{cor:weighted-distance-sharp}
	Let $A\in\R^{n\times n}$ be nonsingular and let $E\in\R_{\ge0}^{n\times n}$. Then
	\[
	\frac{1}{\rho(|A^{-1}|E)}
	\le
	\sigma(A,E)
	\le
	\frac{n}{\rho(|A^{-1}|E)},
	\]
	with the usual convention $1/0:=\infty$. In particular, \eqref{eq:Rum99-bound} holds with $n$ in place of $(3+2\sqrt2)n$. Moreover, both inequalities are sharp.
\end{cor}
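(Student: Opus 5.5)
The corollary follows from Proposition~\ref{prop:weighted-distance-improvement} together with the classical lower bound. We split according to whether $r:=\rho(|A^{-1}|E)$ is positive or zero.

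\emph{Case $r>0$.} The upper bound $\sigma(A,E)\le n/r$ is the weaker inequality in Proposition~\ref{prop:weighted-distance-improvement}, obtained there from $k_*\le n$.

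For the lower bound, take an optimal perturbation $\widetilde E$ with $|\widetilde E|\le \alpha E$, where $\alpha=\sigma(A,E)$ is attained by compactness, and with $A+\widetilde E$ singular. Then $A+\widetilde E=A(I+A^{-1}\widetilde E)$ is singular, so $-1$ is an eigenvalue of $A^{-1}\widetilde E$. Hence
\[
1\le \rho(A^{-1}\widetilde E)\le \rho(|A^{-1}||\widetilde E|)\le \alpha\,\rho(|A^{-1}|E),
\]
using the monotonicity of the Perron root under entrywise domination. This gives $\sigma(A,E)\ge 1/r$. Alternatively, the lower bound follows from \eqref{eq:Rum99-characterization} together with $\rho_{\R}(A^{-1}\widehat E)\le\rho(|A^{-1}\widehat E|)\le \rho(|A^{-1}|E)$.

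\emph{Case $r=0$.} The same chain of inequalities shows that no admissible $\widetilde E$ can make $A+\widetilde E$ singular, since it would give $1\le \alpha\cdot 0$. Therefore $\sigma(A,E)=\infty$, which agrees with both bounds under the convention $1/0=\infty$.

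\emph{Sharpness.} The upper bound is sharp by Rump's examples \cite[eqs.~(25)--(26)]{Rum99}, already cited in the proposition. The lower bound is sharp for $E=|A|$ with $A$ diagonal, or more simply $A=I$ and $E=ee_1^T$. In the latter case $B=|A^{-1}|E=ee_1^T$, so $r=1$, and $\sigma(A,E)=1$ because perturbing the $(1,1)$ entry by $-1$ produces a singular matrix.

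There is no real obstacle here. The only care needed is the degenerate case $r=0$ and the choice of a sharpness example for the lower bound.
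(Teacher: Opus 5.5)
Your proof is correct. For $r>0$ it follows the paper: you quote the upper bound from Proposition~\ref{prop:weighted-distance-improvement}, and you derive the classical lower bound directly from $1\le\rho(A^{-1}\widetilde E)\le\alpha\,\rho(|A^{-1}|E)$, where the paper simply cites that bound as known.

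You differ from the paper in two places, and in both your argument is the sounder one.

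\emph{The degenerate case.} The paper handles $\rho(|A^{-1}|E)=0$ by asserting that then $|A^{-1}|E=0$, and hence $E=0$. A nonnegative matrix with zero spectral radius need only be nilpotent, not zero. For example, $A=I$ and $E=e_1e_2^T$ give $\rho(|A^{-1}|E)=0$ with $E\neq 0$. So that step is invalid as written, although its conclusion $\sigma(A,E)=\infty$ is true. Your inequality chain proves $\sigma(A,E)=\infty$ directly and treats both cases uniformly.

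\emph{Sharpness of the lower bound.} The paper attributes both sharpness claims to Proposition~\ref{prop:weighted-distance-improvement}, but that proposition only concerns the upper inequalities $\sigma(A,E)\le k_*/r\le n/r$. Your example $A=I$, $E=ee_1^T$ supplies the missing justification. There every admissible $I+\widetilde E$ is lower triangular with diagonal $(1+\widetilde E_{11},1,\dots,1)$, so $\sigma(A,E)=1=1/r$.
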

\begin{proof}
	If $\rho(|A^{-1}|E)>0$, the upper bound follows from Proposition~\ref{prop:weighted-distance-improvement}, and the lower bound is the classical lower estimate in \eqref{eq:Rum99-bound}. If $\rho(|A^{-1}|E)=0$, then $|A^{-1}|E=0$. Since every column of $|A^{-1}|$ is nonzero, this forces $E=0$, and hence $\sigma(A,E)=\infty$. The sharpness statements follow from Proposition~\ref{prop:weighted-distance-improvement}.
\end{proof}
	\begin{rem}
	If one only wants the weaker estimate
	\[
	\sigma(A,E)\le \frac{n}{\rho(|A^{-1}|E)},
	\]
	then the proof in Proposition~\ref{prop:weighted-distance-improvement} can be simplified. Indeed, for $\varepsilon>0$, set
	\[
	E_\varepsilon:=E+\varepsilon ee^T.
	\]
	Since $A^{-1}$ has no zero row, the matrix
$
	|A^{-1}|E_\varepsilon
$
	is positive, so its Perron vector is strictly positive. Repeating the above argument with full support (that is, with $k=n$) yields the desired estimate for $E_\varepsilon$, and the conclusion for $E$ then follows by a continuity argument as $\varepsilon\downarrow 0$.
	\end{rem}
	\begin{rem}
		In Proposition~\ref{prop:weighted-distance-improvement},	the refinement by $k_*$ is only nontrivial when $k_*<n$. This cannot occur if
		$B:=|A^{-1}|E$ is irreducible because then every Perron vector of $B$ is
		strictly positive, and hence $k_*=n$ by the Perron--Frobenius theorem.
		Therefore, the improvement from $n$ to $k_*$ is mainly relevant for structured
		or sparse weight matrices $E$, whose zero pattern may force $B$ to be reducible
		and allow Perron eigenvectors supported on proper subsets.
	\end{rem}
	\section{Improvements of selected results of B\"unger and Seeger}\label{sec:BS24}
	B\"unger and Seeger \cite{BS24} studied the sign-real spectral radius
	$
	\rho_{\R}(A)
	$
	and the associated set of sign-real expansive matrices
	\[
	\Omega_n:=\{A\in\R^{n\times n}:\rho_{\R}(A)\ge 1\}.
	\]
	A central theme of \cite[Section~5]{BS24} is to compare $\rho_{\R}$ with other functions that are easier to evaluate and enjoy similar invariance properties. In particular, they consider
	$
	\rho(|A|)
	$
	and, more generally, for $p\in(0,\infty]$,
	\[
	\rho_p^{\mathrm{abs}}(A):=
	\begin{cases}
		\rho\!\bigl(|A|^{[p]}\bigr)^{1/p}, & 0<p<\infty,\\[0.4em]
		\zeta(A), & p=\infty,
	\end{cases}
	\]
	where
	\[
	|A|^{[p]}:=\bigl(|a_{ij}|^p\bigr)_{i,j=1}^n
	\]
	denotes the entrywise $p$-th power of $|A|$, and $\zeta(A)$ is the maximum geometric mean of cyclic products introduced in \cite{Rum97b}.
	
	Among the main results of \cite[Section~5]{BS24} are the following. First, they \cite[Proposition~22]{BS24} proved that
	\begin{equation}\label{eq:BS24-prop22}
		\min\{1,n^{1/p-1}\}\,\rho_{\R}(A)
		\le
		\rho_p^{\mathrm{abs}}(A)
		\le
		(3+2\sqrt2)\,n^{1/p}\,\rho_{\R}(A)
		\qquad (A\in\R^{n\times n},\ p\in(0,\infty]).
	\end{equation}
	Second, they introduced the extremal constants
	\[
	\alpha_{n,p}:=\sup\{\rho_p^{\mathrm{abs}}(A):\ \rho_{\R}(A)=1\},
	\qquad
	\alpha_n:=\alpha_{n,1},
	\]
	and derived the general enclosure
	\begin{equation}\label{eq:BS24-alpha}
		(n-1)^{1/p}\le \alpha_{n,p}\le (3+2\sqrt2)\,n^{1/p},
	\end{equation}
	see \cite[eq.~(36)]{BS24}. Finally, they formulated Rump's conjecture~\eqref{eq:100}:
	\begin{equation}\label{eq:BS24-conj21}
		\rho(|A|)\le n\,\rho_{\R}(A)
		\qquad (A\in\R^{n\times n}),
	\end{equation}
	see \cite[Conjecture~21]{BS24}.
	
	Our Theorem~\ref{thm:global} proves \eqref{eq:BS24-conj21} and therefore sharpens several results in \cite{BS24}. We record these consequences below.
	
	\begin{thm}\label{thm:BS24-improvement}
		Let $A\in\R^{n\times n}$.
		
		\begin{enumerate}[label=\textup{(\alph*)}]
			\item One has
			\[
			\rho(|A|)\le n\,\rho_{\R}(A).
			\]
			
			\item For every $p\in(0,1]$,
			\[
			\rho_p^{\mathrm{abs}}(A)\le n^{1/p}\,\rho_{\R}(A).
			\]
			
			\item For every $p\in[1,\infty]$,
			\[
			\rho_p^{\mathrm{abs}}(A)\le
			\min\bigl\{n,\ (3+2\sqrt2)\,n^{1/p}\bigr\}\,\rho_{\R}(A).
			\]
		\end{enumerate}
	\end{thm}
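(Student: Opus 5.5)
Part (a) is Theorem~\ref{thm:global} with $\K=\R$, so no new work is needed there. The two remaining parts will be derived from (a), combined with elementary facts about entrywise powers of nonnegative matrices and with the existing bound \eqref{eq:BS24-prop22} of B\"unger and Seeger.

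For (b), fix $p\in(0,1]$. The key observation is that the sign-real spectral radius does not increase under entrywise Hadamard-type powers of the right kind. I would avoid that route, however, and instead use monotonicity of $\rho(|A|^{[p]})^{1/p}$ in $p$ together with a row-normalization argument.

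The plan is to reduce to the normalized situation. By continuity of $\rho_\R$ and $\rho$, and by density, assume $|A|$ is irreducible; perturbing by $\varepsilon ee^T$ with an arbitrary sign pattern works. Set $B=|A|^{[p]}$ and $r=\rho(B)$. Let $z>0$ be a Perron vector, $Bz=rz$. Apply the diagonal similarity $D=\diag(z^{1/p})$, which leaves $\rho_\R$ unchanged by (iv) and gives $(D^{-1}|A|D)^{[p]}=D^{-p}BD^{p}$. Hence we may assume every row of $|A|^{[p]}$ sums to $r$, i.e. $\norm{r_i}_p^p=r$.

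For $p\le 1$ one has $\norm{r_i}_1\ge \norm{r_i}_p\, n^{1-1/p}$ in the worst case. That is the wrong direction, so instead I would apply Theorem~\ref{thm:mainp} directly in the $\ell_\infty$ case, which needs $\norm{r_i}_1\ge nt$. Since $\norm{r_i}_1\ge\norm{r_i}_p\cdot(\max_j|a_{ij}|)^{0}$ is not enough, the cleaner route is to use $\norm{r_i}_1\ge \norm{r_i}_\infty$ and $\norm{r_i}_p\le n^{1/p-1}\cdots$; this also fails. So the plan switches to the spectral inequality $\rho(|A|)\ge \rho(|A|^{[p]})^{1/p}\,n^{1-1/p}$ for $p\le1$. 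This inequality does hold: in the normalized form, each row satisfies $\sum_j |a_{ij}|\ge (\sum_j|a_{ij}|^p)^{1/p} n^{1-1/p}$ is false, but the reverse, the power-mean inequality $\sum_j |a_{ij}|\le n^{1-1/p}(\sum|a_{ij}|^p)^{1/p}$, is true, giving an upper bound on $\rho(|A|)$ — again the wrong direction.

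The main obstacle is therefore (b). The approach I would try first is to bypass $\rho(|A|)$ entirely and apply Theorem~\ref{thm:mainp} with $p=\infty$ to the normalized matrix. Since $0<p\le1$, we have $|a_{ij}|\le r^{1/p}$, so $|a_{ij}|^p\le |a_{ij}|\,r^{(p-1)/p}$. Summing over $j$ gives $r\le \norm{r_i}_1 r^{(p-1)/p}$, hence $\norm{r_i}_1\ge r^{1/p}$. Theorem~\ref{thm:mainp} with $t=r^{1/p}/n$ then yields $x$ with $|Ax|\ge t e\ge t|x|$. By \eqref{eq:intro-cw} this gives $\rho_\R(A)\ge r^{1/p}/n$, i.e. $\rho_p^{\mathrm{abs}}(A)\le n\rho_\R(A)\le n^{1/p}\rho_\R(A)$.

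For (c), the part $(3+2\sqrt2)n^{1/p}$ is \eqref{eq:BS24-prop22}. For the bound $n$, use that $p\mapsto\rho(|A|^{[p]})^{1/p}$ is nonincreasing for $p\ge1$. This follows from $\norm{\cdot}_p$ monotonicity applied to the rows of the normalized matrix, and by the same similarity trick the $\ell_p$ row norms dominate the $\ell_1$ bound from below. So $\rho_p^{\mathrm{abs}}(A)\le\rho(|A|)\le n\rho_\R(A)$ by (a). For $p=\infty$, use $\zeta(A)\le\rho(|A|)$, which holds by the cycle geometric mean characterization.
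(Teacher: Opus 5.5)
Your parts (a) and (c) are fine. Part (b), however, rests on a reversed inequality, and the intermediate bound it produces is false.

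For (a), it is the real case of Theorem~\ref{thm:global}. In (c), your normalization argument is correct: scale so that $|A|^{[p]}$ has constant row sums $r$, use $\norm{r_i}_1\ge\norm{r_i}_p=r^{1/p}$, and bound $\rho$ below by the minimal row sum. This gives a self-contained proof of $\rho_p^{\mathrm{abs}}(A)\le\rho(|A|)$ for $p\ge1$, which the paper instead quotes from \cite[Proposition~26]{BS24}. Together with $\zeta(A)\le\rho(|A|)$ and \eqref{eq:BS24-prop22}, this yields (c).

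\textbf{The error in (b).} The step $|a_{ij}|^p\le|a_{ij}|\,r^{(p-1)/p}$ is reversed. Since $p-1\le 0$ and $|a_{ij}|\le r^{1/p}$, you have $|a_{ij}|^{p-1}\ge r^{(p-1)/p}$, hence $|a_{ij}|^p\ge|a_{ij}|\,r^{(p-1)/p}$. Summing over $j$ only gives $\norm{r_i}_1\le r^{1/p}=\norm{r_i}_p$, which is the usual $\ell_1\le\ell_p$ comparison for $p\le1$. The conclusion you draw, $\rho_p^{\mathrm{abs}}(A)\le n\,\rho_\R(A)$ for all $p\in(0,1]$, is false. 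For $S_n$ from \eqref{eq:Sn} one has $\rho_\R(S_n)=1$ and $|S_n|^{[p]}=ee^T-I$, so $\rho_p^{\mathrm{abs}}(S_n)=(n-1)^{1/p}$. This already exceeds $n$ for $p=1/2$, $n=3$, since $4>3$. It is the lower bound $\alpha_{n,p}\ge(n-1)^{1/p}$ in \eqref{eq:BS24-alpha}.

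\textbf{The route you abandoned was the right one.} You discarded it because you had the power-mean inequality backwards, twice. For $0<p\le1$, concavity of $t\mapsto t^p$ gives $\frac1n\sum_j|a_{ij}|^p\le\bigl(\frac1n\sum_j|a_{ij}|\bigr)^p$. This means $\norm{r_i}_1\ge n^{1-1/p}\norm{r_i}_p$ is true, not false, and it points in exactly the direction you need. In your normalized form it gives $\norm{r_i}_1\ge n^{1-1/p}r^{1/p}$ for every row.

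From there you can finish in either of two ways.
\begin{enumerate}
\item The minimal-row-sum bound gives $\rho(|A|)\ge n^{1-1/p}\rho_p^{\mathrm{abs}}(A)$. Combined with (a), this yields $\rho_p^{\mathrm{abs}}(A)\le n^{1/p-1}\rho(|A|)\le n^{1/p}\rho_\R(A)$.
\item Apply Theorem~\ref{thm:mainp} in its $\ell_\infty$ case (conjugate exponent $1$) with $t=(r/n)^{1/p}$ to get $\rho_\R(A)\ge (r/n)^{1/p}=n^{-1/p}\rho_p^{\mathrm{abs}}(A)$ directly.
\end{enumerate}
The inequality $\rho_p^{\mathrm{abs}}(A)\le n^{1/p-1}\rho(|A|)$ is precisely the comparison from \cite[Proposition~26]{BS24} that the paper invokes for (b). Your perturbation-to-irreducible reduction and diagonal normalization carry over unchanged.
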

	
	\begin{proof}
		Part \textup{(a)} is exactly the real case of Theorem~\ref{thm:global}.
		
		For \textup{(b)}, the case $p=1$ is just \textup{(a)}. Let $0<p<1$. By \cite[Proposition~26]{BS24}, for $0<p<1$ one has
		\[
		\rho_p^{\mathrm{abs}}(A)\le n^{1/p-1}\rho_1^{\mathrm{abs}}(A).
		\]
		Since $\rho_1^{\mathrm{abs}}(A)= \rho(|A|)$, part \textup{(a)} gives
		\[
		\rho_p^{\mathrm{abs}}(A)\le n^{1/p-1}\rho(|A|)\le n^{1/p}\rho_{\R}(A).
		\]
		
		For \textup{(c)}, if $p=1$, then the claim again reduces to \textup{(a)}. Let $1<p\le \infty$. By \cite[Proposition~26]{BS24},
		\[
		\rho_p^{\mathrm{abs}}(A)\le \rho_1^{\mathrm{abs}}(A)= \rho(|A|),
		\]
		and hence part \textup{(a)} yields
		\[
		\rho_p^{\mathrm{abs}}(A)\le n\,\rho_{\R}(A).
		\]
		On the other hand, the upper bound in \cite[Proposition~22]{BS24} gives
		\[
		\rho_p^{\mathrm{abs}}(A)\le (3+2\sqrt2)\,n^{1/p}\,\rho_{\R}(A).
		\]
		Taking the minimum of these two estimates proves \textup{(c)}.
	\end{proof}
	
	As an immediate corollary, one obtains improved bounds for the extremal constants $\alpha_{n,p}$.
	
	\begin{cor}\label{cor:BS24-alpha}
		Let
		$
		\alpha_{n,p}:=\sup\{\rho_p^{\mathrm{abs}}(A):\ \rho_{\R}(A)=1\}.
		$
		Then the following hold.
		
		\begin{enumerate}[label=\textup{(\alph*)}]
			\item For every $p\in(0,1]$,
			\[
			(n-1)^{1/p}\le \alpha_{n,p}\le n^{1/p}.
			\]
			
			\item In particular,
			\[
			\alpha_n=\alpha_{n,1}\le n.
			\]
			
			\item For every $p\in[1,\infty]$,
			\[
			\alpha_{n,p}\le \min\bigl\{n,\ (3+2\sqrt2)\,n^{1/p}\bigr\}.
			\]
		\end{enumerate}
	\end{cor}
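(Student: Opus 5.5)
The corollary follows from Theorem~\ref{thm:BS24-improvement} together with the lower enclosure in \eqref{eq:BS24-alpha}, so the plan is simply to translate the pointwise inequalities into bounds on the supremum defining $\alpha_{n,p}$.

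\emph{Upper bounds.} Fix $p$ and let $A\in\R^{n\times n}$ with $\rho_{\R}(A)=1$. For $p\in(0,1]$, Theorem~\ref{thm:BS24-improvement}(b) gives
\[
\rho_p^{\mathrm{abs}}(A)\le n^{1/p}\rho_{\R}(A)=n^{1/p}.
\]
For $p\in[1,\infty]$, part (c) gives
\[
\rho_p^{\mathrm{abs}}(A)\le \min\{n,(3+2\sqrt2)n^{1/p}\}.
\]
Both bounds are uniform in $A$, so taking the supremum over all $A$ with $\rho_{\R}(A)=1$ yields the upper bounds in (a) and (c). Part (b) is the case $p=1$ of (a); the bound $\min\{n,(3+2\sqrt2)n\}=n$ from (c) gives the same result.

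\emph{Lower bound in (a).} We quote the left inequality $(n-1)^{1/p}\le\alpha_{n,p}$ from \cite[eq.~(36)]{BS24}. Alternatively, it follows directly from the matrix $S_n$ in \eqref{eq:Sn}. By \cite[Lemma~5.6]{Rum97b} we have $\rho_{\R}(S_n)=1$. Moreover, $|S_n|^{[p]}=|S_n|=ee^T-I$, whose Perron root is $n-1$. Hence $\rho_p^{\mathrm{abs}}(S_n)=(n-1)^{1/p}$ for $0<p<\infty$, and therefore $\alpha_{n,p}\ge(n-1)^{1/p}$.

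\emph{Main obstacle.} There is no real difficulty; the argument is bookkeeping. The only point to check is that the supremum is over a nonempty set, and $S_n$ provides a member of it.
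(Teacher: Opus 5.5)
Your proposal is correct and matches the paper's proof: the upper bounds come from taking the supremum in Theorem~\ref{thm:BS24-improvement}(b) and (c) over matrices with $\rho_{\R}(A)=1$, part (b) is the case $p=1$, and the lower bound is quoted from \cite[eq.~(36)]{BS24}. Your extra derivation of the lower bound via $S_n$ is a valid self-contained check for $n\ge 2$. For $n=1$ it does not apply, since $S_1=0$ and so $\rho_{\R}(S_1)=0$; but the lower bound $0$ is then trivial, and $I_1$ shows the set is nonempty.
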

	
	\begin{proof}
		The lower bound in \textup{(a)} is exactly the one proved in \cite[eq.~(36)]{BS24}. The upper bound in \textup{(a)} follows immediately from Theorem~\ref{thm:BS24-improvement}\textup{(b)} by restricting to matrices with $\rho_{\R}(A)=1$. Statement \textup{(b)} is the special case $p=1$. Statement \textup{(c)} follows similarly from Theorem~\ref{thm:BS24-improvement}\textup{(c)}.
	\end{proof}
	
	\begin{cor}\label{cor:BS24-inner}
		Let
		$
		\Gamma_{n,1}(\alpha):=\Bigl\{A\in\R^{n\times n}:\ \min_{1\le i\le n}\|Ae_i\|_1\ge \alpha\Bigr\}.
		$
		Then
		\[
		\Gamma_{n,1}(n)\subseteq \Omega_n.
		\]
	\end{cor}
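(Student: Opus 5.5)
The plan is to reduce the statement to Corollary~\ref{cor:cube} by transposition. We must show that every $A\in\Gamma_{n,1}(n)$ satisfies $\rho_{\R}(A)\ge 1$. The hypothesis $\min_i\|Ae_i\|_1\ge n$ says that every column of $A$ has $\ell_1$-norm at least $n$. Equivalently, every row of $A^T$ has $\ell_1$-norm at least $n$, that is, $\abs{A^T}e\ge ne$.

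First, I will apply Corollary~\ref{cor:cube}, or directly Theorem~\ref{thm:mainp} with $p=\infty$, $q=1$, $m=n$, $t=1$ and $\K=\R$, to the matrix $A^T$. This produces a vector $x\in[-1,1]^n\setminus\{0\}$ with
\[
\abs{A^Tx}\ge e\ge\abs{x}.
\]
In particular every coordinate of $A^Tx$ is nonzero.

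Next, I will use the Collatz--Wielandt type characterization \eqref{eq:intro-cw}. Since $x\neq 0$, we have
\[
\min_{x_i\neq0}\abs{(A^Tx)_i/x_i}\ge 1,
\]
so $\rho_{\R}(A^T)\ge 1$. Property (i) gives $\rho_{\R}(A^T)=\rho_{\R}(A)$ in the real case, where $A^*=A^T$. Hence $\rho_{\R}(A)\ge1$, which means $A\in\Omega_n$.

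An alternative route is through Theorem~\ref{thm:BS24-improvement}(a). This would require showing $\rho(\abs{A})\ge n$ from the column condition, via $e^T\abs{A}\ge ne^T$ and the Collatz--Wielandt bound for $\abs{A}^T$. The direct route above is simpler, so I will use it.

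There is no real obstacle. The only points needing care are the passage from a column condition to a row condition via transposition, and the invocation of the transpose invariance (i). Both are immediate from results already stated in the paper.
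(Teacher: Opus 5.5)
Your proof is correct, but it follows a different route from the paper's. The paper argues indirectly. It cites \cite[Lemma~24(b)]{BS24}, which says that $\Gamma_{n,1}(\alpha)\subseteq\Omega_n$ is equivalent to the comparison $\rho(|A|)\le\alpha\,\rho_{\R}(A)$ for all $A\in\R^{n\times n}$. It then invokes Theorem~\ref{thm:BS24-improvement}(a), which is the global comparison Theorem~\ref{thm:global}. The proof of that theorem uses the Frobenius normal form, a Perron vector of an irreducible principal block, a diagonal similarity, and properties (iv) and (vi).

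You instead note that the column condition on $A$ is exactly the row condition $|A^T|e\ge ne$. You then apply the cube case of Theorem~\ref{thm:mainp} directly to $A^T$, and finish with \eqref{eq:intro-cw} and the transpose invariance (i). Your route needs no reducibility analysis, no Perron rescaling and no external lemma, because the normalization already has the required form. It also produces an explicit witness $x\in[-1,1]^n$ with $|A^Tx|\ge e$.

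What the paper's route buys is structural. Through the equivalence in \cite[Lemma~24(b)]{BS24}, the inclusion threshold is identified with the comparison constant. The corollary is therefore seen to be equivalent to Rump's inequality \eqref{eq:100}, not merely implied by it, and any improvement of one constant transfers to the other. Your alternative sketch (from $e^T|A|\ge ne^T$ deduce $\rho(|A|)\ge n$, then combine with Theorem~\ref{thm:BS24-improvement}(a)) is exactly the direction of that lemma the paper actually uses.
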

	
	\begin{proof}
		By \cite[Lemma~24(b)]{BS24}, the inclusion
		\[
		\Gamma_{n,1}(\alpha)\subseteq \Omega_n
		\]
		is equivalent to the estimate
		\[
		\rho(|A|)\le \alpha\,\rho_{\R}(A)
		\qquad (A\in\R^{n\times n}).
		\]
		Applying Theorem~\ref{thm:BS24-improvement}\textup{(a)} with $\alpha=n$ gives the result.
	\end{proof}
	
	\begin{rem}
		Corollary~\ref{cor:BS24-inner} sharpens the technical inclusion used in the proof of \cite[Proposition~5(c)]{BS24}, where the threshold $(3+2\sqrt2)n$ appears. Thus, the path-connectedness argument for $\Omega_n$ in \cite{BS24} may be carried out with the smaller value $\alpha=n$.
		
		Corollary~\ref{cor:BS24-alpha}\textup{(a)} shows that, for $0<p\le 1$, the upper bound in \eqref{eq:BS24-alpha} can be improved from $(3+2\sqrt2)n^{1/p}$ to $n^{1/p}$. Since
		\[
		\frac{n^{1/p}}{(n-1)^{1/p}}\to 1
		\qquad (n\to\infty),
		\]
		this yields an asymptotically sharp enclosure for $\alpha_{n,p}$ throughout the range $0<p\le 1$.
		
		For $p>1$, Theorem~\ref{thm:BS24-improvement}\textup{(c)} does not uniformly improve the bound in \eqref{eq:BS24-prop22}, but it does give a better estimate whenever
		\[
		n^{1-1/p}\le 3+2\sqrt2.
		\]
		In particular, for $p=2$ one gets the sharper bound
		\[
		\rho_2^{\mathrm{abs}}(A)\le n\,\rho_{\R}(A)
		\]
		for all $n\le 33$.
	\end{rem}
	\section{Proof of Theorem~\ref{thm:euclid-sine}}\label{sec:euclid}
	
	Our proof of Theorem~\ref{thm:euclid-sine} relies on the following optimal plank theorem in Hilbert spaces due to Ortega-Moreno \cite[Theorem~1.3]{Ort21}.
	
	\begin{lem}[Ortega-Moreno]\label{lem:optimal}
		Let $H$ be a real Hilbert space, and let $v_1,\dots,v_n\in H$ be unit vectors. Then there exists a unit vector $v\in H$ such that
		\[
		|\langle v_k,v\rangle|\ge \sin\!\Bigl(\frac{\pi}{2n}\Bigr)
		\qquad (k=1,\dots,n).
		\]
	\end{lem}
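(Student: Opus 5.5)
The plan is to take $v$ to be a maximizer of the product polynomial on the sphere, and then to control each factor using a one-variable Bernstein-type inequality along great circles through $v$. This is the Ortega-Moreno approach.

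First reduce to finite dimensions. Replacing $H$ by $V:=\operatorname{span}\{v_1,\dots,v_n\}$, which is a Euclidean space of dimension at most $n$, loses nothing: a unit vector $v\in V$ is also a unit vector in $H$. On the compact unit sphere $S$ of $V$, consider
\[
P(x):=\prod_{k=1}^n\langle v_k,x\rangle .
\]
The polynomial $P$ is not identically zero on $S$, since each factor vanishes only on a hyperplane section. Choose $v\in S$ with $\abs{P(v)}=\max_S\abs{P}>0$. In particular $\langle v_k,v\rangle\neq 0$ for every $k$. The claim is that this $v$ works.

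Fix $k$. If $v_k=\pm v$, then $\abs{\langle v_k,v\rangle}=1$ and there is nothing to prove. Otherwise let $y$ be the normalized component of $v_k$ orthogonal to $v$, and restrict $P$ to the great circle $\gamma(\theta)=\cos\theta\,v+\sin\theta\,y$. This gives
\[
T(\theta):=P(\gamma(\theta))=\prod_{j=1}^n\bigl(a_j\cos\theta+b_j\sin\theta\bigr),
\]
which is a real trigonometric polynomial of degree at most $n$. Because $\gamma(\theta)\in S$ for all $\theta$ and $v$ maximizes $\abs{P}$ on $S$, we have $\abs{T(0)}=\norm{T}_\infty:=\max_\theta\abs{T(\theta)}$.

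The key analytic step is Bernstein's inequality in its sharpened (Szegő) form:
\[
T'(\theta)^2+n^2T(\theta)^2\le n^2\norm{T}_\infty^2 .
\]
Normalize so that $T(0)=\norm{T}_\infty=1$. Writing $T=\cos\phi$ on the interval where $\abs{T}\le 1$, the inequality says $\abs{\phi'}\le n$. Hence $T(\theta)\ge\cos(n\theta)>0$ for $\abs{\theta}<\pi/(2n)$, so $T$ has no zero in the open interval $(-\pi/(2n),\pi/(2n))$.

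The factor $\langle v_k,\gamma(\theta)\rangle=\cos\theta\langle v_k,v\rangle+\sin\theta\langle v_k,y\rangle$ vanishes at an angle $\theta_k$ with $\abs{\theta_k}=\pi/2-\arccos\abs{\langle v_k,v\rangle}$; this uses that $v_k$ lies in the plane spanned by $v$ and $y$. Since $\theta_k$ is a zero of $T$, the previous step gives $\pi/2-\arccos\abs{\langle v_k,v\rangle}\ge\pi/(2n)$. Equivalently,
\[
\abs{\langle v_k,v\rangle}\ge\sin\!\Bigl(\frac{\pi}{2n}\Bigr).
\]
Since $k$ was arbitrary, this proves the lemma.

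The main obstacle is justifying the sharp Bernstein–Szegő inequality together with the phase-function argument near the maximum. The points that need care are applying the $\arccos$ substitution only where $\abs{T}\le1$, and handling points where $T'=0$. I would try first to cite the classical inequality $T'^2+n^2T^2\le n^2\norm{T}_\infty^2$ for real trigonometric polynomials of degree $n$, and then run a comparison (ODE) argument with $\cos(n\theta)$ to get the zero-free interval. Everything else is elementary geometry in the plane $\operatorname{span}\{v,y\}$.
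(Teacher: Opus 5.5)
The paper gives no proof of this lemma; it simply quotes Ortega-Moreno's optimal plank theorem \cite[Theorem~1.3]{Ort21}. Your argument is correct and is essentially the proof in that reference: take a maximizer $v$ of $\abs{\prod_k\langle v_k,x\rangle}$ on the unit sphere of $\operatorname{span}\{v_1,\dots,v_n\}$, restrict to the great circle through $v$ and $v_k$, and use that a real trigonometric polynomial of degree at most $n$ attaining its sup-norm at $0$ has no zero in $(-\pi/(2n),\pi/(2n))$. The delicate points in the phase argument are where $\abs{T}=1$, not where $T'=0$. Because $T$ is nonconstant (it vanishes at $\theta_k$ but not at $0$), that set is finite modulo $2\pi$. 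Hence $\phi=\arccos T$ is continuous everywhere and, by the Szeg\H{o} inequality, satisfies $\abs{\phi'}\le n$ on each complementary interval. So $\phi$ is globally $n$-Lipschitz, which gives $\phi(\theta)\le n\abs{\theta}$ and therefore $T(\theta)\ge\cos(n\theta)$ for $\abs{\theta}\le\pi/n$.
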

	
	\begin{proof}[Proof of Theorem~\ref{thm:euclid-sine}]
		Write $A=(a_{ij})$. Let $H$ be the real Hilbert space underlying $\K^n$, endowed with the inner product
		\[
		\langle x,y\rangle_H:=\operatorname{Re}\!\left(\sum_{j=1}^n x_j\overline{y_j}\right).
		\]
		The induced norm is the usual Euclidean norm on $\K^n$.
		
		For each $i=1,\dots,n$, define
		\[
		u_i:=\frac{1}{\|r_i\|_2}\,(\overline{a_{i1}},\dots,\overline{a_{in}})^T\in H.
		\]
		Then $\|u_i\|_2=1$. By Lemma~\ref{lem:optimal}, there exists a unit vector $x\in H=\K^n$ such that
		\[
		|\langle u_i,x\rangle_H|\ge \sin\!\Bigl(\frac{\pi}{2n}\Bigr)
		\qquad (i=1,\dots,n).
		\]
		Since
		\[
		\langle u_i,x\rangle_H
		=
		\operatorname{Re}\!\left(\frac{\overline{(Ax)_i}}{\|r_i\|_2}\right)
		=
		\frac{\operatorname{Re}(Ax)_i}{\|r_i\|_2},
		\]
		we obtain
		\[
		|(Ax)_i|
		\ge
		|\operatorname{Re}(Ax)_i|
		=
		\|r_i\|_2\,|\langle u_i,x\rangle_H|
		\ge
		\sqrt{n-1}\sin\!\Bigl(\frac{\pi}{2n}\Bigr)
		\qquad (i=1,\dots,n).
		\]
		Hence
		\[
		\abs{Ax}\ge \sqrt{n-1}\sin\!\Bigl(\frac{\pi}{2n}\Bigr)e.
		\]
		
		Assume now that $\K=\R$. To see that the constant is optimal, consider the matrix $A_n\in\R^{n\times n}$ whose $k$th row is
		\[
		r_k:=\sqrt{n-1}\bigl(\cos((k-1)\pi/n),\,\sin((k-1)\pi/n),\,0,\dots,0\bigr),
		\qquad k=1,\dots,n.
		\]
		Then
		\[
		\norm{r_k}_2=\sqrt{n-1}
		\qquad (k=1,\dots,n),
		\]
		so $A_n$ satisfies the hypothesis of the theorem.
		
		Let $z=(z_1,\dots,z_n)\in\R^n$ be a unit vector, and set
		\[
		\rho:=\sqrt{z_1^2+z_2^2}\le 1.
		\]
		If $\rho=0$, then $A_nz=0$, and there is nothing to prove. Otherwise, write
		\[
		(z_1,z_2)=\rho(\cos\theta,\sin\theta)
		\]
		for some $\theta\in\R$. Then
		\[
		(A_nz)_k
		=
		\sqrt{n-1}\,\rho\cos\!\Bigl(\theta-\frac{(k-1)\pi}{n}\Bigr),
		\qquad k=1,\dots,n.
		\]
		Now set
		\[
		\beta_k:=\frac{\pi}{2}+\frac{(k-1)\pi}{n}
		\qquad (k=1,\dots,n),
		\]
		viewed modulo $\pi$. Since the points $\beta_1,\dots,\beta_n$ divide the circle of length $\pi$ into $n$ equal arcs, there exists $k$ such that
		\[
		\theta-\beta_k=\delta+m\pi
		\]
		for some integer $m$ and some real number $\delta$ satisfying
		\[
		|\delta|\le \frac{\pi}{2n}.
		\]
		For this $k$ we obtain
		\[
		|(A_nz)_k|
		=
		\sqrt{n-1}\,\rho\,
		\Bigl|\sin\!\bigl(\theta-\beta_k\bigr)\Bigr|
		=
		\sqrt{n-1}\,\rho\,|\sin\delta|
		\le
		\sqrt{n-1}\sin\!\Bigl(\frac{\pi}{2n}\Bigr).
		\]
		Hence
		\[
		\min_{1\le k\le n}|(A_nz)_k|
		\le
		\sqrt{n-1}\sin\!\Bigl(\frac{\pi}{2n}\Bigr)
		\]
		for every unit vector $z\in\R^n$. On the other hand, the first part of the theorem guarantees the reverse inequality for some unit vector $x\in\R^n$. Therefore
		\[
		\max_{\norm{z}_2=1}\min_{1\le k\le n}|(A_nz)_k|
		=
		\sqrt{n-1}\sin\!\Bigl(\frac{\pi}{2n}\Bigr),
		\]
		which proves optimality in the real case.
		
		The final assertion follows from $\abs{x}\le e$.
	\end{proof}
	
	Theorem~\ref{thm:euclid-sine} provides quantitative lower bounds for $\rho_{\R}(A)$ under the Euclidean row condition
	\[
	\norm{r_i}_2\ge \sqrt{n-1}
	\qquad (i=1,\dots,n).
	\]
	It is natural to ask whether these bounds can be upgraded to a genuine cube escape, or even to a normalized escape of the form $\abs{Ax}\ge e$. The next two propositions show that, in general, neither strengthening is available.
	
\begin{prop}[Failure of the $\ell_\infty$-strengthening]\label{prop:linf-counterexample}
	Consider the matrix
	\[
	A=\frac1{\sqrt2}
	\begin{pmatrix}
		1&1\\
		1&-1
	\end{pmatrix}.
	\]
	Then every row of $A$ has Euclidean norm $1=\sqrt{2-1}$, but there is no vector
	$x\in\R^2$ with $\norm{x}_\infty\le 1$ such that
	\[
	\abs{Ax}\ge e.
	\]
	Consequently, the implication
	\[
	\norm{r_i}_2\ge \sqrt{n-1}
	\quad (i=1,\dots,n)
	\qquad\Longrightarrow\qquad
	\exists x\in[-1,1]^n\text{ such that }\abs{Ax}\ge e
	\]
	fails already in dimension $2$.
\end{prop}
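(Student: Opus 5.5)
The plan is a direct computation. First we check the row norms: each row of $A$ is $\frac{1}{\sqrt2}(1,\pm1)$, whose Euclidean norm is $\sqrt{\tfrac12+\tfrac12}=1=\sqrt{2-1}$. So $A$ satisfies the hypothesis of Conjecture~\ref{conj:200} with $n=2$.

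Next, suppose $x=(x_1,x_2)^T\in\R^2$ satisfies $\norm{x}_\infty\le 1$ and $\abs{Ax}\ge e$. Writing out the two coordinates, this means
\[
\abs{x_1+x_2}\ge \sqrt2,
\qquad
\abs{x_1-x_2}\ge \sqrt2 .
\]
Squaring both inequalities and adding them, the cross terms cancel by the parallelogram identity. This gives
\[
2(x_1^2+x_2^2)=(x_1+x_2)^2+(x_1-x_2)^2\ge 4,
\]
so $x_1^2+x_2^2\ge 2$. On the other hand, $\norm{x}_\infty\le1$ gives $x_1^2+x_2^2\le 2$. Hence equality holds throughout, which forces $\abs{x_1}=\abs{x_2}=1$. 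Then either $x_1=x_2$ or $x_1=-x_2$. In the first case $x_1-x_2=0$, and in the second $x_1+x_2=0$. Either way one of the two required inequalities fails, since $0<\sqrt2$. This contradiction shows no such $x$ exists.

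The final sentence of the proposition follows immediately, because this $A$ is a $2\times 2$ instance satisfying the row condition for which no $x\in[-1,1]^2$ has $\abs{Ax}\ge e$. The equality-case analysis is the only point needing care, and it is elementary. An equivalent viewpoint: $A$ is orthogonal, so $\norm{Ax}_2=\norm{x}_2\le\sqrt2$, while $\abs{Ax}\ge e$ demands $\norm{Ax}_2\ge\sqrt2$. This again forces $\abs{x_1}=\abs{x_2}=1$, and we conclude as above.
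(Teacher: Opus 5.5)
Your proof is correct, but it takes a different route from the paper. The paper sets $u=x_1+x_2$, $v=x_1-x_2$, inverts to $x_1=(u+v)/2$ and $x_2=(u-v)/2$, and then splits into cases according to whether $u,v$ have equal or opposite signs. In effect it shows $\norm{x}_\infty=\max(|u+v|,|u-v|)/2=(|u|+|v|)/2\ge\sqrt2$ for every $x$ with $\abs{Ax}\ge e$.

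You instead use that $A$ is orthogonal (equivalently, the parallelogram identity) to get $\norm{x}_2\ge\sqrt2$. This is exactly the circumradius of $[-1,1]^2$, so you have to finish with an equality-case analysis at the corners $\abs{x}=e$, where one coordinate of $Ax$ vanishes.

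Your route avoids sign cases and makes the geometric obstruction transparent. However, it is tight at the threshold and leaves no margin. The paper's computation gives the quantitative statement that no $x$ with $\norm{x}_\infty<\sqrt2$ satisfies $\abs{Ax}\ge e$, and this is sharp, as $x=(\sqrt2,0)^T$ shows. So the failure persists even on the enlarged cubes $[-c,c]^2$ with $c<\sqrt2$, which your argument does not detect.
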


\begin{proof}
	A direct computation shows that each row of $A$ has Euclidean norm $1$.
	
	Suppose, for contradiction, that there exists
	\[
	x=(x_1,x_2)^T\in\R^2
	\qquad\text{with}\qquad
	\norm{x}_\infty\le 1
	\]
	such that
	\[
	\abs{Ax}\ge e.
	\]
	Write
	\[
	u:=x_1+x_2,
	\qquad
	v:=x_1-x_2.
	\]
	Then
	\[
	Ax=\frac1{\sqrt2}(u,v)^T,
	\]
	so $\abs{Ax}\ge e$ implies
	\[
	|u|\ge \sqrt2,
	\qquad
	|v|\ge \sqrt2.
	\]
	
	If $u$ and $v$ have the same sign, then
	\[
	|u+v|=|u|+|v|,
	\]
	and therefore
	\[
	|x_1|
	=
	\frac{|u+v|}{2}
	=
	\frac{|u|+|v|}{2}
	\ge \sqrt2>1,
	\]
	contrary to $\norm{x}_\infty\le 1$.
	
	If $u$ and $v$ have opposite signs, then
	\[
	|u-v|=|u|+|v|,
	\]
	and hence
	\[
	|x_2|
	=
	\frac{|u-v|}{2}
	=
	\frac{|u|+|v|}{2}
	\ge \sqrt2>1,
	\]
	again a contradiction.
	
	In both cases we obtain a contradiction. Hence no vector
	$x\in\R^2$ with $\norm{x}_\infty\le 1$ can satisfy $\abs{Ax}\ge e$.
\end{proof}
	\begin{rem}
		Proposition~\ref{prop:linf-counterexample} shows that the stronger estimate $|Ax|\ge e$, which was the key ingredient in establishing the sharp upper bound for the entrywise distance to the nearest singular matrix in Section~\ref{sec:distance}, fails under the weaker assumption $\|r_i\|_2\ge \sqrt{n-1}$.
	\end{rem}
	\begin{prop}[Failure of the $\ell_p$-strengthening for $1\le p<\infty$]\label{prop:lp-normalized-counterexample}
		Consider the matrix
		\[
		A=I_2.
		\]
		Then every row of $A$ has Euclidean norm $1$, but for every $1\le p<\infty$ there is no vector $x\in\R^2$ with $\norm{x}_p\le 1$ such that
		\[
		\abs{Ax}\ge e.
		\]
		Consequently, for every $1\le p<\infty$, the implication
		\[
		\norm{r_i}_2\ge \sqrt{n-1}
		\quad (i=1,\dots,n)
		\qquad\Longrightarrow\qquad
		\exists x\in\R^n\text{ with }\norm{x}_p\le 1\text{ and }\abs{Ax}\ge e
		\]
		fails already in dimension $2$.
	\end{prop}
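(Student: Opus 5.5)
The proof is a direct verification. For $A=I_2$ the rows are $e_1^T$ and $e_2^T$, each of Euclidean norm $1=\sqrt{2-1}$, so the hypothesis of Conjecture~\ref{conj:200} holds with $n=2$.

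Fix $1\le p<\infty$ and suppose, for contradiction, that some $x=(x_1,x_2)^T\in\R^2$ satisfies $\norm{x}_p\le 1$ and $\abs{Ax}\ge e$. Since $Ax=x$, the condition $\abs{Ax}\ge e$ says exactly that $|x_1|\ge 1$ and $|x_2|\ge 1$. Then
\[
\norm{x}_p^p=|x_1|^p+|x_2|^p\ge 2>1,
\]
so $\norm{x}_p\ge 2^{1/p}>1$, which is the desired contradiction. It is the finiteness of $p$ that makes $2^{1/p}>1$.

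This disproves the stated implication for every $1\le p<\infty$ already in dimension $2$. Together with Proposition~\ref{prop:linf-counterexample}, which handles $p=\infty$ using a different matrix, it covers every $p\in[1,\infty]$.

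There is no real obstacle. The only point to note is why $I_2$ cannot serve for $p=\infty$: there $x=e$ has $\norm{x}_\infty=1$ and $\abs{Ax}=e$. This is why the $\ell_\infty$ case needed the separate rotated example of Proposition~\ref{prop:linf-counterexample}.
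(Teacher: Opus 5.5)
Your proof is correct and is essentially the paper's argument: $\abs{Ax}\ge e$ forces $|x_1|,|x_2|\ge 1$, so $\norm{x}_p\ge 2^{1/p}>1$, a contradiction. Your side remark that $I_2$ cannot handle $p=\infty$ is also correct, since $x=e$ has $\norm{x}_\infty=1$ and $\abs{Ax}=e$.
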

	
	\begin{proof}
		Let $A=I_2$. Then each row has Euclidean norm $1=\sqrt{2-1}$. If there were a vector $x=(x_1,x_2)^T\in\R^2$ with $\norm{x}_p\le 1$ and $\abs{Ax}\ge e$, then $\abs{x_1}\ge 1$ and $\abs{x_2}\ge 1$, and therefore
		\[
		\norm{x}_p^p=\abs{x_1}^p+\abs{x_2}^p\ge 2.
		\]
		Hence $\norm{x}_p\ge 2^{1/p}>1$, a contradiction.
	\end{proof}

\section{Auxiliary Gaussian estimates}\label{sec:gaussian}

Throughout this section, let all random variables be defined on an underlying probability
space $(\Omega,\mathcal F,\PP)$.
We use the standard real and complex Gaussian conventions $N(0,\Sigma)$ and
$CN(0,\Sigma)$. The purpose of this section is twofold. First, we collect a few
elementary invariance and distributional facts for Gaussian vectors. Second, we
prove two two-dimensional scalar estimates, one in the complex setting and one
in the real setting, which bound the probability of the event
$
|U|<|V|
$
for a centered Gaussian pair $(U,V)$ in terms of $\EE |U|^2$. These bounds will
be the key probabilistic input in the covering arguments of
Section~\ref{sec:general-proof}.
Since only a few routine background facts are needed, we record them here and
defer standard material to Appendix~\ref{app:gaussian}.

\begin{lem}\label{lem:gaussian-facts}
	The following facts will be used repeatedly.
	\begin{enumerate}[label=\textup{(\alph*)}]
		\item If $g=(g_1,\ldots,g_m)^T\sim N(0,I_m)$ and $Q\in\R^{m\times m}$ is orthogonal, then
		\[
		Q^Tg\sim N(0,I_m).
		\]
		
		\item If $\zeta=(\zeta_1,\ldots,\zeta_m)^T\sim CN(0,I_m)$ and $Q\in\C^{m\times m}$ is unitary, then
		\[
		Q^*\zeta\sim CN(0,I_m).
		\]
		
		\item If $\xi\sim CN(0,1)$, then
		\[
		|\xi|^2\sim \operatorname{Exp}(1).
		\]
		
		\item If $Z_1$ and $Z_2$ are independent $N(0,1)$ random variables, then
		$
		Z_1/Z_2
		$
		has the standard Cauchy distribution. In particular, for every $s\ge 0$,
		\[
		\PP\!\left(\left|\frac{Z_1}{Z_2}\right|<s\right)
		=
		\frac{2}{\pi}\arctan s.
		\]
	\end{enumerate}
\end{lem}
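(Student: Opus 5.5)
The four items are standard facts about Gaussian vectors. I will verify each one directly from densities or characteristic functions, using the conventions of Appendix~\ref{app:gaussian}. By these conventions, $N(0,I_m)$ has density $(2\pi)^{-m/2}e^{-\norm{x}_2^2/2}$ on $\R^m$. Also $CN(0,I_m)$ means the real and imaginary parts are independent $N(0,\tfrac12 I_m)$, so the density is $\pi^{-m}e^{-\norm{z}_2^2}$ on $\C^m\cong\R^{2m}$.

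For (a), I apply the change of variables $y=Q^Tx$. Its Jacobian has absolute value $\abs{\det Q}=1$, and $\norm{Qy}_2=\norm{y}_2$. Hence the density of $Q^Tg$ at $y$ is $(2\pi)^{-m/2}e^{-\norm{Qy}_2^2/2}=(2\pi)^{-m/2}e^{-\norm{y}_2^2/2}$. Equivalently, the characteristic function is $\EE e^{i\langle s,Q^Tg\rangle}=e^{-\norm{Qs}_2^2/2}=e^{-\norm{s}_2^2/2}$.

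For (b), I argue the same way. A unitary $Q$, viewed as a real-linear map of $\R^{2m}$, is orthogonal, so its real Jacobian has absolute value $1$. It also preserves $\norm{z}_2$, so the density $\pi^{-m}e^{-\norm{z}_2^2}$ is invariant under $z\mapsto Q^*z$.

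For (c), write $\xi=X+iY$ with $X,Y$ independent $N(0,\tfrac12)$. In polar coordinates,
\[
\PP(\abs{\xi}^2>s)=\int_0^{2\pi}\!\!\int_{\sqrt s}^\infty \pi^{-1}e^{-r^2}\,r\,dr\,d\theta=e^{-s}.
\]
This is the $\operatorname{Exp}(1)$ tail.

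For (d), the pair $(Z_1,Z_2)$ is rotation invariant. Its angle $\Theta$ is therefore uniform on $[0,2\pi)$, and $Z_1/Z_2=\cot\Theta$ almost surely. The event $\abs{Z_1/Z_2}<s$ is $\abs{\tan\Theta}>1/s$. In each period of length $\pi$ this is an arc of length $\pi-2\arctan(1/s)=2\arctan s$. Hence the probability equals $2\arctan(s)/\pi$ for $s>0$, and it is trivially $0$ at $s=0$. Differentiating gives the density $1/(\pi(1+s^2))$, which is the standard Cauchy law.

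None of these steps is a real obstacle. The only point needing care is the normalization in (b) and (c), namely the variance $\tfrac12$ per real coordinate in $CN(0,1)$. With the other common normalization, $\abs{\xi}^2$ would be $\operatorname{Exp}(1/2)$ rather than $\operatorname{Exp}(1)$.
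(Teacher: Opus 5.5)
Your proposal is correct. For (a)--(c) it follows the paper's route: the densities $(2\pi)^{-m/2}e^{-\norm{x}_2^2/2}$ and $\pi^{-m}e^{-\norm{z}_2^2}$ are invariant under orthogonal maps, and a unitary map is a real-orthogonal map of $\R^{2m}$. The polar-coordinate computation for $\abs{\xi}^2$ is the same, except that you compute the tail $e^{-s}$ rather than $\PP(\abs{\xi}^2\le t)=1-e^{-t}$.

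The genuine difference is in (d). The paper cites the Cauchy-ratio fact from Durrett and then integrates the Cauchy density over $[-s,s]$. You instead derive the displayed probability from rotation invariance of $(Z_1,Z_2)$, which is part (a) with $m=2$. The polar angle $\Theta$ is uniform, $Z_1/Z_2=\cot\Theta$ almost surely, and an arc-length count gives $\frac{2}{\pi}\arctan s$. This reverses the paper's logical order (formula first, Cauchy law second) and makes the lemma self-contained. It also produces exactly the statement the paper later uses in the proof of Lemma~\ref{lem:real-scalar}, which needs only $\PP(\abs{Z_1/Z_2}<s)$.

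One small slip: differentiating $\PP(\abs{Z_1/Z_2}<s)$ gives the density $2/(\pi(1+s^2))$ of $\abs{Z_1/Z_2}$ on $(0,\infty)$, not the density of $Z_1/Z_2$ itself. To conclude the standard Cauchy law, add that $Z_1/Z_2$ and $-Z_1/Z_2$ have the same law, since $(Z_1,Z_2)$ and $(-Z_1,Z_2)$ do. Alternatively, compute directly on one period that $\PP(\cot\Theta\le s)=\frac12+\frac1\pi\arctan s$.
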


 We say that a property holds \emph{almost surely} (abbreviated a.s.) if it
holds on an event of probability one. Equivalently, a statement depending on
$\omega\in\Omega$ holds almost surely if there exists an event
$A\in\mathcal F$ with $\PP(A)=1$ such that the statement holds for every
$\omega\in A$. We now state two scalar estimates.
	
	\begin{lem}[Complex scalar estimate]\label{lem:complex-scalar}
		Let
		\[
		\binom{U}{V}\sim CN(0,\Sigma),
		\qquad
		\Sigma=
		\begin{pmatrix}
			\alpha & c\\
			\overline{c} & 1
		\end{pmatrix},
		\qquad
		\alpha\ge 1,
		\qquad
		|c|^2\le \alpha.
		\]
		Then
		\begin{equation}\label{eq:complex-scalar}
			\PP\bigl(|U|<|V|\bigr)\le \frac{1}{1+\alpha}.
		\end{equation}
	\end{lem}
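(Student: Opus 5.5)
The plan is to compute $\PP(|U|<|V|)$ exactly by diagonalizing the indefinite Hermitian form $|V|^2-|U|^2$, and then to show the resulting expression is maximized at $c=0$.

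\textbf{Step 1: reduce to a form in standard Gaussians.} First assume $\Sigma$ is nonsingular, i.e.\ $|c|^2<\alpha$. Write $w=(U,V)^T=\Sigma^{1/2}\zeta$ with $\zeta\sim CN(0,I_2)$, and set $J=\diag(-1,1)$. Then
\[
Q:=|V|^2-|U|^2=w^*Jw=\zeta^*\Sigma^{1/2}J\Sigma^{1/2}\zeta .
\]
Diagonalize the Hermitian matrix $\Sigma^{1/2}J\Sigma^{1/2}$ by a unitary matrix. By Lemma~\ref{lem:gaussian-facts}(b),(c) we get
\[
Q\overset{d}{=}\lambda_2E_2+\lambda_1E_1,
\]
where $E_1,E_2$ are independent $\operatorname{Exp}(1)$ variables and $\lambda_1\le\lambda_2$ are the eigenvalues of $\Sigma^{1/2}J\Sigma^{1/2}$.

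\textbf{Step 2: identify the eigenvalues.} The matrix $\Sigma^{1/2}J\Sigma^{1/2}$ is similar to
\[
J\Sigma=\begin{pmatrix}-\alpha&-c\\ \overline c&1\end{pmatrix}.
\]
So $\lambda_1+\lambda_2=1-\alpha$ and $\lambda_1\lambda_2=-(\alpha-|c|^2)<0$, which gives $\lambda_1<0<\lambda_2$. Set
\[
s:=\lambda_2-\lambda_1=\sqrt{(1-\alpha)^2+4(\alpha-|c|^2)}=\sqrt{(1+\alpha)^2-4|c|^2}.
\]

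\textbf{Step 3: compute the probability.} For independent exponentials with $a,b>0$, the elementary identity $\PP(aE_2>bE_1)=a/(a+b)$ holds. With $a=\lambda_2$ and $b=-\lambda_1$ this gives
\[
\PP(|U|<|V|)=\PP(Q>0)=\frac{\lambda_2}{\lambda_2-\lambda_1}=\frac12+\frac{1-\alpha}{2s}.
\]

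\textbf{Step 4: maximize in $c$.} Since $\alpha\ge1$, the quantity $1-\alpha$ is nonpositive. Hence the expression is nondecreasing in $s$. Also $s$ is largest when $c=0$, where $s=1+\alpha$. Therefore
\[
\PP(|U|<|V|)\le \frac12+\frac{1-\alpha}{2(1+\alpha)}=\frac1{1+\alpha}.
\]

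\textbf{Step 5: the boundary case $|c|^2=\alpha$.} This is the only delicate point: here $\Sigma$ is singular and Step 1 fails as written. I would handle it by continuity. Replace $c$ by $tc$ with $t\uparrow1$; the laws converge weakly. The event $\{|U|<|V|\}$ is open, so the portmanteau theorem gives
\[
\PP(|U|<|V|)\le\liminf_{t\uparrow 1}\PP_t(|U|<|V|)\le\frac1{1+\alpha}.
\]
Alternatively, one can argue directly. In this case $U=\overline{c}\,V$ almost surely, so $|U|=\sqrt\alpha\,|V|\ge|V|$ and the probability is $0$.

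The main obstacle is simply getting the signs and the similarity reduction in Steps 1–2 right. The monotonicity in $s$ then makes the bound immediate.
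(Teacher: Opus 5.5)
Your proposal is correct and follows essentially the paper's route: write $|U|^2-|V|^2$ as a Hermitian form in $\zeta\sim CN(0,I_2)$, read off the eigenvalues from the similarity with $J\Sigma$, evaluate the probability as $\lambda_2/(\lambda_2-\lambda_1)$ via independent exponentials, and compare with $1/(1+\alpha)$; your monotonicity-in-$s$ step is equivalent to the paper's explicit difference formula. For the degenerate case $|c|^2=\alpha$ the paper uses the direct argument $\EE|U-cV|^2=0$ rather than portmanteau (both work), but note that the almost sure relation is $U=cV$, not $U=\overline{c}\,V$, since $\EE(U\overline{V})=c$ --- this is harmless here because only $|U|=|c|\,|V|$ is used.
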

	
	\begin{proof}
		We first consider the degenerate case
	$
		|c|^2=\alpha.
$
		Since
		\[
		\EE|U|^2=\alpha,\qquad \EE(U\overline V)=c,\qquad \EE|V|^2=1,
		\]
		we compute
		\[
		\begin{aligned}
			\EE|U-cV|^2
			&=
			\EE|U|^2-\overline c\,\EE(U\overline V)-c\,\EE(V\overline U)+|c|^2\EE|V|^2\\
			&=
			\alpha-\overline c\,c-c\,\overline c+|c|^2\\
			&=
			\alpha-|c|^2\\
			&=0.
		\end{aligned}
		\]
		Now $|U-cV|^2$ is a nonnegative random variable. A nonnegative random
		variable with expectation $0$ must vanish almost surely. Hence
		\[
		U=cV
		\qquad\text{almost surely}.
		\]
		Since $\alpha=|c|^2\ge 1$, we have $|c|\ge 1$, and therefore
		\[
		|U|=|c|\,|V|\ge |V|
		\qquad\text{almost surely}.
		\]
		It follows that
		\[
		\PP(|U|<|V|)=0,
		\]
		which is stronger than \eqref{eq:complex-scalar}.
		
		Assume now that
	$
		|c|^2<\alpha.
$
		Then $\Sigma$ is Hermitian positive definite. Let
	$
		J=\diag(1,-1).
	$
		Choose $\zeta\sim CN(0,I_2)$ and write
		\[
		\binom{U}{V}=\Sigma^{1/2}\zeta.
		\]
		Since
		\[
		|U|^2-|V|^2
		=
		\begin{pmatrix}\overline U&\overline V\end{pmatrix}
		\begin{pmatrix}1&0\\0&-1\end{pmatrix}
		\binom{U}{V},
		\]
		we obtain
		\[
		|U|^2-|V|^2
		=
		\zeta^*\bigl(\Sigma^{1/2}J\Sigma^{1/2}\bigr)\zeta.
		\]
		
		Set
$
		H:=\Sigma^{1/2}J\Sigma^{1/2}.
$
		Since $\Sigma$ is invertible,
		\[
		\Sigma^{-1/2}H\Sigma^{1/2}=J\Sigma,
		\]
	so $H$ is similar to $J\Sigma$ and hence has the same eigenvalues as
		$J\Sigma$.
		The characteristic polynomial of $J\Sigma$ is
		\[
		\det(\lambda I-J\Sigma)
		=
		\lambda^2-(\alpha-1)\lambda+(|c|^2-\alpha).
		\]
		Therefore the eigenvalues of $H$ are
		\[
		\lambda_{\pm}
		=
		\frac{\alpha-1\pm \Delta}{2},
		\qquad
		\Delta:=\sqrt{(\alpha+1)^2-4|c|^2}.
		\]
		Moreover,
		\[
		\det(J\Sigma)=|c|^2-\alpha<0,
		\]
		so the two eigenvalues have opposite signs. Since $H$ is Hermitian, its
		eigenvalues are real; therefore
		\[
		\lambda_+>0>\lambda_-.
		\]
		
	Since $H$ is Hermitian, there exists a unitary matrix $Q$ such that
		\[
		Q^*HQ=\diag(\lambda_+,\lambda_-).
		\]
		Write
		\[
		Q^*\zeta=(\xi_1,\xi_2)^T.
		\]
		As noted in Lemma~\ref{lem:gaussian-facts}, $Q^*\zeta\sim CN(0,I_2)$. Hence $\xi_1$ and $\xi_2$ are
		independent standard complex Gaussian random variables. By using Lemma~\ref{lem:gaussian-facts} again,
		$|\xi_1|^2$ and $|\xi_2|^2$ are independent $\operatorname{Exp}(1)$ random
		variables. Set
		\[
		X:=|\xi_1|^2,\qquad Y:=|\xi_2|^2.
		\]
		Then $X$ and $Y$ are independent and each has density
		\[
		e^{-t}\mathbf 1_{[0,\infty)}(t).
		\]
		
		Now
		\[
		\begin{aligned}
			|U|^2-|V|^2
			&=
			\zeta^*H\zeta\\
			&=
			\zeta^*Q\diag(\lambda_+,\lambda_-)Q^*\zeta\\
			&=
			(Q^*\zeta)^*\diag(\lambda_+,\lambda_-) (Q^*\zeta)\\
			&=
			\lambda_+|\xi_1|^2+\lambda_-|\xi_2|^2\\
			&=
			\lambda_+X+\lambda_-Y.
		\end{aligned}
		\]
		Hence
		\[
		\PP(|U|<|V|)
		=
		\PP(|U|^2-|V|^2<0)
		=
		\PP(\lambda_+X<-\lambda_-Y).
		\]
		
		We next compute $	\PP(aX<bY)$ for arbitrary $a,b>0$,
		Since $X$ and $Y$ are independent, conditioning on the value of $Y$ gives
		\[
		\PP(aX<bY)
		=
		\int_0^\infty \PP\!\left(X<\frac{b}{a}y\right)e^{-y}\,dy.
		\]
		Because $X\sim \operatorname{Exp}(1)$,
		\[
		\PP(X<t)=1-e^{-t}
		\qquad (t\ge 0).
		\]
		Therefore
		\[
		\begin{aligned}
			\PP(aX<bY)
			&=
			\int_0^\infty \left(1-e^{-(b/a)y}\right)e^{-y}\,dy\\
			&=
			\int_0^\infty e^{-y}\,dy-\int_0^\infty e^{-(1+b/a)y}\,dy\\
			&=
			1-\frac{1}{1+b/a}\\
			&=
			\frac{b}{a+b}.
		\end{aligned}
		\]
		Applying this with
		\[
		a=\lambda_+,\qquad b=-\lambda_->0,
		\]
		we obtain
		\[
		\PP(|U|<|V|)
		=
		\frac{-\lambda_-}{\lambda_+-\lambda_-}
		=
		\frac{\Delta-(\alpha-1)}{2\Delta}.
		\]
		
		Subtracting $1/(1+\alpha)$ from both sides gives
\begin{equation}\label{eq:right-hand}
		\PP(|U|<|V|)-\frac{1}{1+\alpha}
	=
	-\frac{(\alpha-1)\bigl((\alpha+1)-\Delta\bigr)}{2\Delta(1+\alpha)}.
\end{equation}
		Since
		\[
		\alpha\ge 1
		\qquad\text{and}\qquad
		\Delta\le \alpha+1,
		\]
		the right-hand side of \eqref{eq:right-hand} is nonpositive. Therefore
		\[
		\PP(|U|<|V|)\le \frac{1}{1+\alpha}.
		\]
		This proves \eqref{eq:complex-scalar}.
	\end{proof}
	
	\begin{lem}[Real scalar estimate]\label{lem:real-scalar}
		Let
		\[
		\binom{U}{V}\sim N(0,\Sigma),
		\qquad
		\Sigma=
		\begin{pmatrix}
			\alpha & c\\
			c & 1
		\end{pmatrix},
		\qquad
		\alpha\ge 1,
		\qquad
		c^2\le \alpha.
		\]
		Then
		\begin{equation}\label{eq:real-scalar}
			\PP\bigl(|U|<|V|\bigr)
			\le
			\frac{2}{\pi}\arctan\!\Bigl(\frac{1}{\sqrt{\alpha}}\Bigr).
		\end{equation}
	\end{lem}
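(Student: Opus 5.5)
I will mirror the proof of Lemma~\ref{lem:complex-scalar}, replacing the exponential computation by the Cauchy distribution of Lemma~\ref{lem:gaussian-facts}(d). The strategy is to compute $\PP(|U|<|V|)$ exactly in terms of the eigenvalues of $J\Sigma$ and then compare with $\frac{2}{\pi}\arctan(1/\sqrt{\alpha})$.

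First I dispose of the degenerate case $c^2=\alpha$. As before, $\EE(U-cV)^2=\alpha-c^2=0$, so $U=cV$ almost surely. Since $|c|\ge 1$, this gives $\PP(|U|<|V|)=0$.

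Now assume $c^2<\alpha$, so that $\Sigma$ is positive definite. Write $(U,V)^T=\Sigma^{1/2}g$ with $g\sim N(0,I_2)$. Then
\[
U^2-V^2=g^TH g,\qquad H=\Sigma^{1/2}J\Sigma^{1/2},
\]
and $H$ is similar to $J\Sigma$. The eigenvalues of $H$ are therefore $\lambda_\pm=\frac{\alpha-1\pm\Delta}{2}$ with $\Delta=\sqrt{(\alpha+1)^2-4c^2}$. Since $\det(J\Sigma)=c^2-\alpha<0$, we have $\lambda_+>0>\lambda_-$.

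Diagonalize $H$ by an orthogonal matrix $Q$. By Lemma~\ref{lem:gaussian-facts}(a), $Q^Tg=(Z_1,Z_2)$ has independent standard normal entries, and
\[
U^2-V^2=\lambda_+Z_1^2+\lambda_-Z_2^2.
\]
Hence
\[
\PP(|U|<|V|)=\PP\bigl(|Z_1/Z_2|<\sqrt{-\lambda_-/\lambda_+}\bigr)=\frac{2}{\pi}\arctan\sqrt{\frac{-\lambda_-}{\lambda_+}},
\]
by Lemma~\ref{lem:gaussian-facts}(d).

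Since $\arctan$ is increasing, it remains to show $-\lambda_-/\lambda_+\le 1/\alpha$, that is, $\lambda_++\alpha\lambda_-\ge 0$. Substituting the eigenvalues, this reads
\[
(\alpha-1)(1+\alpha)\ge(\alpha-1)\Delta.
\]
This holds because $\alpha\ge1$ and $\Delta\le\alpha+1$; the latter follows from $c^2\ge 0$. (Equality occurs at $c=0$, where $\PP(|U|<|V|)=\frac{2}{\pi}\arctan(1/\sqrt\alpha)$ exactly, as expected.)

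The only step needing care is this final comparison. Unlike the complex case, it involves a ratio of eigenvalues under a square root rather than a rational expression, but monotonicity of $\arctan$ reduces it to the linear inequality above. Everything else is a verbatim adaptation of the proof of Lemma~\ref{lem:complex-scalar}.
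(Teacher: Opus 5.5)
Your proposal is correct and follows the paper's proof essentially step for step. Both treat the degenerate case $c^2=\alpha$ via $U=cV$ a.s., use the similarity of $\Sigma^{1/2}J\Sigma^{1/2}$ to $J\Sigma$ and orthogonal diagonalization, and obtain $\frac{2}{\pi}\arctan\sqrt{-\lambda_-/\lambda_+}$ from the Cauchy ratio. Both then reduce the final comparison to $(\alpha-1)(\alpha+1-\Delta)\ge 0$.
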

	
	\begin{proof}
		We first treat the degenerate case
$
		c^2=\alpha.
$
		Exactly as in the proof of Lemma~\ref{lem:complex-scalar}, one finds
		\[
		\EE|U-cV|^2=0,
		\]
		hence
		\[
		U=cV
		\qquad\text{almost surely}.
		\]
		Since $|c|=\sqrt{\alpha}\ge 1$, it follows that
		\[
		|U|=|c|\,|V|\ge |V|
		\qquad\text{almost surely},
		\]
		and therefore
		\[
		\PP(|U|<|V|)=0.
		\]
		
		Assume now that
	$
		c^2<\alpha.
$
		Then $\Sigma$ is symmetric positive definite. Let
		\[
		J=\diag(1,-1).
		\]
		Choose $\zeta\sim N(0,I_2)$ and write
		\[
		\binom{U}{V}=\Sigma^{1/2}\zeta.
		\]
		As in the complex case,
		\[
		|U|^2-|V|^2=\zeta^T H\zeta,
		\qquad
		H:=\Sigma^{1/2}J\Sigma^{1/2}.
		\]
		The matrix $H$ is real symmetric. Since
		\[
		\Sigma^{-1/2}H\Sigma^{1/2}=J\Sigma,
		\]
		$H$ is similar to $J\Sigma$, so the two matrices have the same
		eigenvalues. The characteristic polynomial of $J\Sigma$ is
		\[
		\lambda^2-(\alpha-1)\lambda+(c^2-\alpha),
		\]
		hence the eigenvalues of $H$ are
		\[
		\lambda_{\pm}
		=
		\frac{\alpha-1\pm \Delta}{2},
		\qquad
		\Delta:=\sqrt{(\alpha+1)^2-4c^2}.
		\]
		Since
		\[
		\det(J\Sigma)=c^2-\alpha<0,
		\]
		these eigenvalues are real and of opposite signs:
		\[
		\lambda_+>0>\lambda_-.
		\]
		
		Choose an orthogonal matrix $Q$ such that
		\[
		Q^T H Q=\diag(\lambda_+,\lambda_-).
		\]
		Write
		\[
		Q^T\zeta=(Z_1,Z_2)^T.
		\]
		Because the density of $N(0,I_2)$ is
		\[
		(2\pi)^{-1}e^{-\|x\|_2^2/2},
		\]
		which depends only on $\|x\|_2$, it is invariant under orthogonal
		transformations. Thus
		\[
		Q^T\zeta\sim N(0,I_2),
		\]
		so $Z_1$ and $Z_2$ are independent $N(0,1)$ random variables.
		
		Exactly as before,
		\[
		\begin{aligned}
			|U|^2-|V|^2
			&=
			\zeta^T H\zeta\\
			&=
			\zeta^TQ\diag(\lambda_+,\lambda_-)Q^T\zeta\\
			&=
			(Q^T\zeta)^T\diag(\lambda_+,\lambda_-)(Q^T\zeta)\\
			&=
			\lambda_+Z_1^2+\lambda_-Z_2^2.
		\end{aligned}
		\]
		Hence
		\[
		\PP(|U|<|V|)
		=
		\PP(\lambda_+Z_1^2<-\lambda_-Z_2^2)
		=
		\PP\!\left(\left|\frac{Z_1}{Z_2}\right|<\sqrt{\frac{-\lambda_-}{\lambda_+}}\right).
		\]
	By Lemma~\ref{lem:gaussian-facts}, $Z_1/Z_2$ has the standard Cauchy distribution,
		\[
		\PP(|U|<|V|)
		=
		\frac{2}{\pi}\arctan\!\sqrt{\frac{-\lambda_-}{\lambda_+}}.
		\]
		
		Now
		\[
		\frac{-\lambda_-}{\lambda_+}
		=
		\frac{\Delta-(\alpha-1)}{\Delta+(\alpha-1)}.
		\]
		We claim that
		\[
		\frac{-\lambda_-}{\lambda_+}\le \frac{1}{\alpha}.
		\]
		Indeed,
		\[
		\frac{\Delta-(\alpha-1)}{\Delta+(\alpha-1)}\le \frac{1}{\alpha}
		\iff
		\alpha\bigl(\Delta-(\alpha-1)\bigr)\le \Delta+(\alpha-1),
		\]
		which is equivalent to
		\[
		(\alpha-1)\bigl(\Delta-(\alpha+1)\bigr)\le 0.
		\]
		This is true because
		\[
		\alpha\ge 1
		\qquad\text{and}\qquad
		\Delta\le \alpha+1.
		\]
		Therefore
		\[
		\sqrt{\frac{-\lambda_-}{\lambda_+}}\le \frac{1}{\sqrt{\alpha}}.
		\]
		Since $\arctan$ is increasing on $[0,\infty)$, we conclude that
		\[
		\PP(|U|<|V|)
		\le
		\frac{2}{\pi}\arctan\!\Bigl(\frac{1}{\sqrt{\alpha}}\Bigr).
		\]
		This proves \eqref{eq:real-scalar}.
	\end{proof}

\section{Proofs of Theorems~\ref{thm:complex-general} and \ref{thm:real-general}}\label{sec:general-proof}
In this section, we prove Theorems~\ref{thm:complex-general} and \ref{thm:real-general} by using Lemmas~\ref{lem:complex-scalar} and \ref{lem:real-scalar}. A further ingredient is the following elementary topological observation: if $n\ge 2$, then the punctured spaces $\C^n\setminus\{0\}$ and $\R^n\setminus\{0\}$ are path connected. Consequently, they are connected and therefore cannot be decomposed into a disjoint union of nonempty open subsets; see~\cite[Chapter~3, \S23]{Mun00}.
	\begin{proof}[Proof of Theorem~\ref{thm:complex-general}]
		Assume, for contradiction, that there is no nonzero vector $x\in \C^n$ such that $\abs{Ax}\ge \abs{x}$. Then every nonzero vector $z\in \C^n$ belongs to at least one of the sets
		\[
		E_i:=\bigl\{z\in \C^n : \abs{(Az)_i}<\abs{z_i}\bigr\}
		\qquad (1\le i\le n).
		\]
		In other words,
		\begin{equation}\label{eq:complex-cover}
			\C^n\setminus \{0\}=\bigcup_{i=1}^n E_i.
		\end{equation}
		Each $E_i$ is open, because the map
		$
		z\mapsto |(Az)_i|-|z_i|
		$
		is continuous and
		$
		E_i=\{z\in\C^n:\ |(Az)_i|-|z_i|<0\}
		$
		is the inverse image of the open interval $(-\infty,0)$.
		
		Let $g=(g_1,\dots,g_n)^T\sim CN(0,I_n)$. Since $\PP(g=0)=0$, \eqref{eq:complex-cover} implies
		\[
		1=\PP\!\left(\bigcup_{i=1}^n E_i\right).
		\]
		Fix $i\in\{1,\dots,n\}$, and define
		\[
		U_i:=(Ag)_i,
		\qquad
		V_i:=g_i,
		\qquad
		\alpha_i:=\norm{r_i}_2^2,
		\qquad
		c_i:=a_{ii}.
		\]
		Then $(U_i,V_i)$ is a centered proper complex Gaussian vector in $\C^2$ with covariance matrix
		\[
		\Sigma_i=
		\begin{pmatrix}
			\alpha_i & c_i\\
			\overline{c_i} & 1
		\end{pmatrix}.
		\]
Furthermore,
$
\alpha_i=\norm{r_i}_2^2\ge 1
$
by hypothesis, and
\[
|c_i|^2=|a_{ii}|^2\le \sum_{j=1}^n |a_{ij}|^2=\norm{r_i}_2^2=\alpha_i.
\]
Hence Lemma~\ref{lem:complex-scalar} applies and gives
		\[
		\PP(E_i)
		=
		\PP\bigl(\abs{U_i}<\abs{V_i}\bigr)
		\le
		\frac{1}{1+\alpha_i}
		=
		\frac{1}{1+\norm{r_i}_2^2}.
		\]
		Summing over $i$, we obtain
		\[
		1
		=
		\PP\!\left(\bigcup_{i=1}^n E_i\right)
		\le
		\sum_{i=1}^n \PP(E_i)
		\le
		\sum_{i=1}^n \frac{1}{1+\norm{r_i}_2^2}
		\le 1.
		\]
		Therefore all inequalities in the preceding display are equalities.
		
		We now show that each $E_i$ is nonempty. Indeed, if $E_k=\varnothing$ for some $k$, then
		\[
		1
		=
		\PP\!\left(\bigcup_{i=1}^n E_i\right)
		\le
		\sum_{i\ne k} \PP(E_i)
		\le
		\sum_{i\ne k} \frac{1}{1+\norm{r_i}_2^2}
		<
		\sum_{i=1}^n \frac{1}{1+\norm{r_i}_2^2}
		\le 1,
		\]
		a contradiction. Hence every $E_i$ is nonempty.
		
		Next we prove that the family $(E_i)_{i=1}^n$ is pairwise disjoint. For \(i\in\{1,\dots,n\}\), let
		\[
		H_i:=\bigcup_{j<i} E_j,
		\]
		with the convention \(H_1=\varnothing\). Since the sets \(E_i\setminus H_i\) are pairwise disjoint and their union is \(\bigcup_{i=1}^n E_i\), we have
		\[
		\PP\!\left(\bigcup_{i=1}^n E_i\right)
		=
		\sum_{i=1}^n \PP(E_i\setminus H_i).
		\]
	Since \(E_i\setminus H_i = E_i \setminus (E_i\cap H_i)\) and \(H_i\) is measurable, it follows that
		\[
		\PP(E_i\setminus H_i)=\PP(E_i)-\PP(E_i\cap H_i)
		\qquad (2\le i\le n).
		\]
		Hence
		\[
		\sum_{i=2}^n \PP(E_i\cap H_i)
		=
		\sum_{i=1}^n \PP(E_i)-\PP\!\left(\bigcup_{i=1}^n E_i\right)
		=
		0.
		\]
		Therefore
		\[
		\PP(E_i\cap H_i)=0
		\qquad (2\le i\le n).
		\]
		Now the law of $g$ has a strictly positive smooth density on $\C^n$, so every nonempty open subset of $\C^n$ has strictly positive probability. Since each $E_i\cap H_i$ is open, we conclude that
		\[
		E_i\cap H_i=\varnothing
		\qquad (2\le i\le n).
		\]
		Thus $E_1,\dots,E_n$ are pairwise disjoint.
		
		We have shown that $\C^n\setminus\{0\}$ is the disjoint union of the nonempty open sets $E_1,\dots,E_n$. This contradicts the path connectedness of $\C^n\setminus\{0\}$. The contradiction proves the theorem.
	\end{proof}
	\begin{proof}[Proof of Theorem~\ref{thm:real-general}]
		Assume, for contradiction, that there is no nonzero vector $x\in \R^n$ such that $\abs{Ax}\ge \abs{x}$ coordinatewise. Define
		\[
		E_i:=\bigl\{z\in \R^n : \abs{(Az)_i}<\abs{z_i}\bigr\}
		\qquad (1\le i\le n).
		\]
		Then each $E_i$ is open and
		\begin{equation}\label{eq:real-cover}
			\R^n\setminus\{0\}=\bigcup_{i=1}^n E_i.
		\end{equation}
		
		Let $g=(g_1,\dots,g_n)^T\sim N(0,I_n)$. Since $\PP(g=0)=0$, \eqref{eq:real-cover} implies
		\[
		1=\PP\!\left(\bigcup_{i=1}^n E_i\right).
		\]
		Fix $i\in\{1,\dots,n\}$, and define
		\[
		U_i:=(Ag)_i,
		\qquad
		V_i:=g_i,
		\qquad
		\alpha_i:=\norm{r_i}_2^2,
		\qquad
		c_i:=a_{ii}.
		\]
		Then $(U_i,V_i)$ is a centered real Gaussian vector in $\R^2$ with covariance matrix
		\[
		\Sigma_i=
		\begin{pmatrix}
			\alpha_i & c_i\\
			c_i & 1
		\end{pmatrix}.
		\]
		By Lemma~\ref{lem:real-scalar},
		\[
		\PP(E_i)
		=
		\PP\bigl(\abs{U_i}<\abs{V_i}\bigr)
		\le
		\frac{2}{\pi}\arctan\!\Bigl(\frac{1}{\sqrt{\alpha_i}}\Bigr)
		=
		\frac{2}{\pi}\arctan\!\Bigl(\frac{1}{\norm{r_i}_2}\Bigr).
		\]
		Therefore
		\[
		1
		=
		\PP\!\left(\bigcup_{i=1}^n E_i\right)
		\le
		\sum_{i=1}^n \PP(E_i)
		\le
		\sum_{i=1}^n \frac{2}{\pi}\arctan\!\Bigl(\frac{1}{\norm{r_i}_2}\Bigr)
		\le 1.
		\]
		Hence all inequalities are equalities.
		
		Exactly as in the proof of Theorem~\ref{thm:complex-general}, equality implies first that each $E_i$ is nonempty, and second that the sets $E_1,\dots,E_n$ are pairwise disjoint. Consequently $\R^n\setminus\{0\}$ is the disjoint union of $n$ nonempty open sets, contradicting the path connectedness of $\R^n\setminus\{0\}$ for $n\ge 2$. This contradiction proves the theorem.
	\end{proof}
	
	\begin{rem}
		The proof of Theorem~\ref{thm:complex-general} is genuinely stronger than the real one, because the complex scalar estimate in  Lemma~\ref{lem:complex-scalar} is sharper than its real counterpart Lemma~\ref{lem:real-scalar}. This is exactly why the natural complex threshold $\sqrt{n-1}$ is not reached by the same argument in the real case.
	\end{rem}
	
	\section{Further remarks on Conjecture~\ref{conj:200}}\label{sec:further}
	
	In this section, we record an additional positive result for the Euclidean row condition in the planar case. Although Conjecture~\ref{conj:200} remains open in general, the two-dimensional case already follows from Theorem~\ref{thm:real-general}. Nevertheless, it admits a more direct argument based on a simple determinant criterion.
	
	We begin with an elementary lemma that converts a suitable signed spectral condition into the desired componentwise inequality. This is one of the many characterizations of the sign-real spectral radius; see \cite[Theorem~2.3]{Rum97b}. For the reader's convenience we include a short proof.
	\begin{lem}\label{lem:signature}
		Let $A\in\R^{2\times 2}$. Suppose there exists a signature matrix
		\[
		D=\diag(\varepsilon_1,\varepsilon_2),
		\qquad \varepsilon_1,\varepsilon_2\in\{\pm1\},
		\]
		such that
		\[
		\det(I-DA)\le 0.
		\]
		Then there exists $x\in\R^2\setminus\{0\}$ such that
		\[
		\abs{Ax}\ge \abs{x}.
		\]
	\end{lem}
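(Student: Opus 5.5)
The plan is an intermediate value argument applied to the real polynomial $\lambda\mapsto\det(\lambda I-DA)$. Set $M:=DA$, where $D$ is the signature matrix from the hypothesis, and
\[
p(\lambda):=\det(\lambda I-M)=\lambda^2-(\operatorname{tr}M)\lambda+\det M .
\]
This is a monic real quadratic. By hypothesis $p(1)=\det(I-DA)\le 0$, and $p(\lambda)\to+\infty$ as $\lambda\to\infty$. So there is a real $\lambda_0\ge 1$ with $p(\lambda_0)=0$: if $p(1)=0$ take $\lambda_0=1$, and otherwise use the intermediate value theorem on $[1,\infty)$. Hence $\lambda_0$ is a real eigenvalue of $M$, and we can pick a real eigenvector $x\in\R^2\setminus\{0\}$ with $DAx=\lambda_0 x$.

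The componentwise inequality then follows at once. Since $D$ is a signature matrix, $\abs{Dy}=\abs{y}$ for every $y\in\R^2$. Therefore
\[
\abs{Ax}=\abs{DAx}=\lambda_0\abs{x}\ge\abs{x},
\]
which is the claimed conclusion.

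I expect essentially no obstacle. The only point needing care is that the root $\lambda_0$ must be real and at least $1$; this is exactly what the sign condition $p(1)\le0$ together with monicity provides. A real root in turn guarantees a real eigenvector. As a side remark, if one wanted to match the framework of \eqref{eq:intro-cw}, the same $x$ gives $\abs{(Ax)_i/x_i}=\lambda_0\ge1$ on $\operatorname{supp}x$. So $\rho_{\R}(A)\ge1$, in line with the characterization in \cite[Theorem~2.3]{Rum97b}. The direct argument above, however, does not need this.
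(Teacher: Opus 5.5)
Your proof is correct and follows the paper's argument essentially verbatim. Like the paper, you take $p(\lambda)=\det(\lambda I-DA)$ and use $p(1)\le 0$ together with the positive leading coefficient to obtain a real root $\lambda\ge 1$; you then pick a real eigenvector $x$ and conclude from $\abs{Ax}=\abs{DAx}=\lambda\abs{x}\ge\abs{x}$.
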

	
	\begin{proof}
		Let
		\[
		p(\lambda):=\det(\lambda I-DA).
		\]
		Then $p$ is a real quadratic polynomial with positive leading coefficient, and
		\[
		p(1)=\det(I-DA)\le 0.
		\]
		It follows that $p(\lambda)=0$ for some real $\lambda\ge 1$. Let $x\ne 0$ be an eigenvector of $DA$ corresponding to $\lambda$. Then
		\[
		\abs{Ax}=\abs{DAx}=\lambda\abs{x}\ge \abs{x},
		\]
		which proves the claim.
	\end{proof}
	
	We now apply Lemma~\ref{lem:signature} to show that Conjecture~\ref{conj:200} is true in dimension $2$. Thus, while the higher-dimensional problem appears to be rather delicate, the planar case can  be settled by an explicit argument.
	
	\begin{thm}\label{thm:planar}
		Let $A\in\R^{2\times 2}$, and let $r_1,r_2$ denote the rows of $A$. If
		\[
		\norm{r_i}_2\ge 1
		\qquad (i=1,2),
		\]
		then there exists a vector $x\in\R^2\setminus\{0\}$ such that
		\[
		\abs{Ax}\ge \abs{x}.
		\]
	\end{thm}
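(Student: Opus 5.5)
The plan is to apply Lemma~\ref{lem:signature}: it suffices to find a signature matrix $D=\diag(\varepsilon_1,\varepsilon_2)$ with $\det(I-DA)\le 0$. Write $A=\begin{pmatrix} a&b\\ c&d\end{pmatrix}$. Expanding the determinant gives
\[
\det(I-DA)=1-\varepsilon_1 a-\varepsilon_2 d+\varepsilon_1\varepsilon_2\det A .
\]
Set $\delta:=\det A=ad-bc$. The four sign choices yield the values $1-a-d+\delta$, $1+a+d+\delta$, $1-a+d-\delta$ and $1+a-d-\delta$. The minimum of these four numbers is
\[
1-\max\bigl(|a+d|-\delta,\ |a-d|+\delta\bigr).
\]
So the goal reduces to proving
\[
\max\bigl(|a+d|-\delta,\ |a-d|+\delta\bigr)\ge 1
\]
under the hypotheses $a^2+b^2\ge 1$ and $c^2+d^2\ge 1$.

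First I would use the average of the two quantities, which is at least $\tfrac12(|a+d|+|a-d|)=\max(|a|,|d|)$. So if $|a|\ge1$ or $|d|\ge1$ we are done. It remains to treat the case $|a|,|d|<1$; then $|b|\ge\sqrt{1-a^2}$ and $|c|\ge\sqrt{1-d^2}$, and both are positive. Replacing $A$ by $SAS$ with $S=\diag(1,-1)$ flips the signs of $b$ and $c$ together, and this transformation preserves the conclusion. Replacing $A$ by $-A$ also preserves the conclusion. Using these, I can normalize the sign of $bc$ and of $a+d$.

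If $bc>0$, then $\delta = ad - bc$ is pushed down, and I would use the first quantity $|a+d|-\delta = |a+d|-ad+bc$. If $bc<0$, then $\delta$ is pushed up, and I would use the second quantity $|a-d|+ad-bc$. By the symmetry $a\mapsto -a$ (equivalently $D$ acting on one side), both cases reduce to showing
\[
|a+d|-ad+|b||c|\ge 1 \quad\text{or}\quad |a-d|+ad+|b||c|\ge1 .
\]
For this, choose the sign that makes $|a\pm d|\mp ad$ as large as possible. It suffices to prove that
\[
\max\bigl(|a+d|-ad,\ |a-d|+ad\bigr)+\sqrt{(1-a^2)(1-d^2)}\ge 1 .
\]

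This inequality is the main obstacle. Taking $a,d\ge0$ without loss of generality (flip signs via the symmetries), we have $|a+d|-ad=a+d-ad=1-(1-a)(1-d)$. So it suffices to show
\[
\sqrt{(1-a^2)(1-d^2)}\ge (1-a)(1-d),
\]
which is true because $1+a\ge 1-a$ and $1+d\ge 1-d$ when $a,d\ge 0$. When $a$ and $d$ have opposite signs, the same argument applies with $|a-d|+ad$ and $d$ replaced by $-d$.

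The remaining care point is bookkeeping the sign reductions. I must check that each reduction (the $SAS$ conjugation, the map $A\mapsto -A$, and the substitution $d\mapsto -d$ realized by left multiplication by a signature matrix) either preserves the set of values $\det(I-DA)$ or corresponds to a transformation under which the property ``there is $x\ne0$ with $|Ax|\ge|x|$'' is invariant. Both hold, since $|D_1AD_2x|=|A(D_2x)|$ and $|D_2x|=|x|$. Finally, Lemma~\ref{lem:signature} yields the desired $x$.
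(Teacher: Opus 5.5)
\textbf{There is a genuine gap in the sign bookkeeping.} Your framework matches the paper's: you reduce to Lemma~\ref{lem:signature}, your formula $\min_D\det(I-DA)=1-\max(|a+d|-\delta,\ |a-d|+\delta)$ is correct, and the case $\max(|a|,|d|)\ge 1$ is handled correctly. The failure comes afterwards.

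\emph{Which quantity you may use is forced.} The sign of $bc$ determines which of $|a+d|-ad$ and $|a-d|+ad$ receives $+|bc|$. For $bc>0$ you must show $|a+d|-ad+|bc|\ge 1$, and for $bc<0$ you must show $|a-d|+ad+|bc|\ge 1$. You are not free to ``choose the sign that makes $|a\pm d|\mp ad$ as large as possible,'' so your displayed max-inequality does not suffice.

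\emph{The symmetries do not rescue this.} Both $SAS$ and $-A$ leave $bc$ unchanged, since flipping $b$ and $c$ together does not change their product. Any one-sided signature multiplication that flips $a$ or $d$ alone also flips $\sgn(bc)$. In fact $\sgn(ad\cdot bc)$ is invariant under $A\mapsto D_1AD_2$. So a configuration such as $a,d\in(0,1)$ with $bc<0$ can never be normalized to ``$a,d\ge0$ with the favourable quantity.'' In that configuration you are forced to use $|a-d|+ad+|bc|$, and $|a-d|+ad$ is the \emph{smaller} of the two candidates. Your argument never examines this quantity.

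\emph{Example.} Take $a=d=\tfrac12$ and $b=-c=\tfrac{\sqrt3}{2}$. Your inequality holds, with left side $\tfrac32$. Yet $|a+d|-\delta=0$, and the conclusion survives only because $|a-d|+\delta=1$.

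\emph{What the missing case needs.} For $a\ge d\ge 0$ with $bc<0$ you need $(1-a)(1+d)\le |bc|$, that is, $(1-a)(1+d)\le\sqrt{(1-a^2)(1-d^2)}$. This does hold, because $(1-a)(1+d)\le(1+a)(1-d)$ when $a\ge d$. That comparison relies on the ordering $a\ge d$, not on the inequalities $1\pm a\ge 1\mp a$ that you invoke, so your stated comparison does not cover it.

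\emph{How the paper avoids the case analysis.} Once $\varepsilon_1\varepsilon_2=\sgn(bc)$ is fixed, the two admissible values of $(1-\varepsilon_1 a)(1-\varepsilon_2 d)$ are $(1-a)(1-d),(1+a)(1+d)$ or $(1-a)(1+d),(1+a)(1-d)$. In either case their product is $(1-a^2)(1-d^2)$. Hence the smaller value is at most $\sqrt{(1-a^2)(1-d^2)}\le |bc|$, and so $\det(I-DA)=(1-\varepsilon_1a)(1-\varepsilon_2d)-|bc|\le 0$ in every sign configuration. Inserting this observation in place of your ``choose the max'' and ``WLOG $a,d\ge0$'' steps repairs the proof.
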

	
	\begin{proof}
		Write
		\[
		A=\begin{pmatrix} a & b \\ c & d \end{pmatrix}.
		\]
		If $\abs{a}\ge 1$, then taking $x=(1,0)$ yields
		\[
		\abs{Ax}=(\abs{a},\abs{c})^T\ge (1,0)^T=\abs{x}.
		\]
		Similarly, if $\abs{d}\ge 1$, then $x=(0,1)$ works. We may therefore assume that
		\[
		\abs{a}<1,
		\qquad
		\abs{d}<1.
		\]
		
		Since $\norm{r_1}_2\ge 1$ and $\norm{r_2}_2\ge 1$, we obtain
		\[
		\abs{b}\ge \sqrt{1-a^2}>0,
		\qquad
		\abs{c}\ge \sqrt{1-d^2}>0.
		\]
		In particular, $bc\ne 0$.
		
		Set
		\[
		s:=\sgn(bc)\in\{\pm1\}.
		\]
		Among the two pairs $(\varepsilon_1,\varepsilon_2)\in\{\pm1\}^2$ satisfying $\varepsilon_1\varepsilon_2=s$, choose one that minimizes
		\[
		(1-\varepsilon_1 a)(1-\varepsilon_2 d).
		\]
		If $s=1$, the two possible values are
		\[
		(1-a)(1-d)
		\qquad\text{and}\qquad
		(1+a)(1+d),
		\]
		whereas if $s=-1$, they are
		\[
		(1-a)(1+d)
		\qquad\text{and}\qquad
		(1+a)(1-d).
		\]
		In either case, the product of the two admissible values is
		\[
		(1-a^2)(1-d^2).
		\]
		Hence the smaller one does not exceed the geometric mean, and therefore
		\begin{equation}\label{eq:min-planar}
			(1-\varepsilon_1 a)(1-\varepsilon_2 d)
			\le \sqrt{(1-a^2)(1-d^2)}.
		\end{equation}
		
		On the other hand,
		\[
		\abs{bc}\ge \sqrt{1-a^2}\,\sqrt{1-d^2}
		= \sqrt{(1-a^2)(1-d^2)}.
		\]
		Combining this with \eqref{eq:min-planar}, we obtain
		\[
		(1-\varepsilon_1 a)(1-\varepsilon_2 d)\le \abs{bc}.
		\]
		
		Now let
		\[
		D=\diag(\varepsilon_1,\varepsilon_2).
		\]
		Since $\varepsilon_1\varepsilon_2=s=\sgn(bc)$, we have
		\[
		\varepsilon_1\varepsilon_2bc=\abs{bc},
		\]
		and hence
		\[
		\det(I-DA)
		=(1-\varepsilon_1 a)(1-\varepsilon_2 d)-\varepsilon_1\varepsilon_2bc
		=(1-\varepsilon_1 a)(1-\varepsilon_2 d)-\abs{bc}\le 0.
		\]
		Lemma~\ref{lem:signature} gives the desired conclusion.
	\end{proof}
As noted above, the complex analogue of Conjecture~\ref{conj:200} is contained in Theorem~\ref{thm:complex-general}. In the planar complex case, however, the conclusion can be strengthened further.
	\begin{thm}\label{thm:planar-complex}
		Let $A\in\C^{2\times 2}$, and let $r_1,r_2$ denote the rows of $A$. If
		\[
		\norm{r_i}_2\ge 1
		\qquad (i=1,2),
		\]
		then there exists a vector $x\in\C^2\setminus\{0\}$ with $|x|=e$ such that
		\[
		\abs{Ax}\ge e.
		\]
	\end{thm}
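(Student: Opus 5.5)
We reduce the problem to one unimodular parameter. Multiplying $x$ by a unimodular scalar does not change $\abs{x}$ or $\abs{Ax}$, so it is enough to look for $x=(1,w)^T$ with $w=e^{i\theta}$ on the unit circle $\mathbb T$. Then $\abs{x}=e$ automatically. Writing
\[
A=\begin{pmatrix} a & b\\ c & d\end{pmatrix},
\]
we must find $\theta$ such that $\abs{a+be^{i\theta}}\ge 1$ and $\abs{c+de^{i\theta}}\ge 1$ hold together. For each row define the bad set
\[
B_1:=\{\theta:\abs{a+be^{i\theta}}<1\},\qquad B_2:=\{\theta:\abs{c+de^{i\theta}}<1\}.
\]
The claim is that $B_1\cup B_2$ is not all of $\mathbb T$.

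The first step is to describe each bad set. We expand
\[
\abs{a+be^{i\theta}}^2=\abs{a}^2+\abs{b}^2+2\abs{a}\abs{b}\cos(\theta-\varphi)
\]
for a suitable phase $\varphi$.
\begin{itemize}
\item If $ab=0$, the modulus is constant and equals $\norm{r_1}_2\ge 1$, so $B_1=\varnothing$.
\item Otherwise $B_1=\{\theta:\cos(\theta-\varphi)<\gamma\}$ with $\gamma=(1-\norm{r_1}_2^2)/(2\abs{a}\abs{b})\le 0$.
\end{itemize}
In the second case $B_1$ is either empty or an open arc contained in $\{\cos(\theta-\varphi)<0\}$, so its length is at most $\pi$. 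The second row is handled in the same way, so $B_2$ is also empty or an open arc of length at most $\pi$.

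The second step is a covering argument. Suppose $B_1\cup B_2=\mathbb T$. Neither set can be empty, because a single open arc of length at most $\pi$ cannot cover $\mathbb T$. Comparing arc length,
\[
2\pi\le\abs{B_1}+\abs{B_2}-\abs{B_1\cap B_2}\le 2\pi-\abs{B_1\cap B_2},
\]
so $B_1\cap B_2$ has measure zero. Since $B_1\cap B_2$ is open, it must be empty. Then $\mathbb T$ is a disjoint union of two nonempty open sets, which contradicts the connectedness of the circle. This is the same connectedness mechanism as in the proof of Theorem~\ref{thm:complex-general}, now in a one-dimensional, explicit form.

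Hence some $w=e^{i\theta}$ lies outside $B_1\cup B_2$, and $x=(1,w)^T$ satisfies $\abs{x}=e$ and $\abs{Ax}\ge e$. The only point needing care is the first step: one must verify that the threshold $\gamma$ is nonpositive, which is exactly where the hypothesis $\norm{r_i}_2\ge1$ enters. The degenerate cases $ab=0$ or $cd=0$ also have to be handled, but they only make the corresponding bad set empty.
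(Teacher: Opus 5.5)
Your proposal is correct and follows the paper's proof: you use the same reduction to $x=(1,z)^T$ with $\abs{z}=1$ and the same cosine computation, showing that each row's admissible set is all of $\mathbb T$ or a closed arc of length at least $\pi$. You phrase this through the complementary open ``bad'' arcs of length at most $\pi$. The only difference is that the paper simply observes that two closed arcs of length at least $\pi$ must intersect, whereas you prove the equivalent statement (two open arcs of length at most $\pi$ cannot cover the circle) by a measure-plus-connectedness argument, which is valid but more than is needed here.
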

	
	\begin{proof}
		Write
		\[
		A=\begin{pmatrix} a & b \\ c & d \end{pmatrix},
		\qquad
		\mathbb T:=\{z\in\C:\ |z|=1\}.
		\]
		For the two rows of $A$, define
		\[
		E_1:=\{z\in\mathbb T:\ |a+bz|\ge 1\},
		\qquad
		E_2:=\{z\in\mathbb T:\ |c+dz|\ge 1\}.
		\]
		We claim that each $E_i$ is either all of $\mathbb T$ or a closed arc of angular length at least $\pi$.
		
		Indeed, consider $E_1$. If $ab=0$, then since
		\[
		|a|^2+|b|^2=\|r_1\|_2^2\ge 1,
		\]
		one has either $|a|\ge 1$ or $|b|\ge 1$, and therefore $E_1=\mathbb T$.
		Now assume that $ab\ne 0$. Write
		\[
		a=\alpha e^{i\mu},\qquad b=\beta e^{i\nu},
		\qquad \alpha,\beta>0.
		\]
		Then, as $z$ runs through $\mathbb T$, so does $e^{i(\nu-\mu)}z$, and
		\[
		|a+bz|
		=
		\bigl|\alpha+\beta e^{i(\nu-\mu)}z\bigr|.
		\]
		Hence $E_1$ is a rotation of the set of all $e^{it}\in\mathbb T$ such that
		\[
		|\alpha+\beta e^{it}|\ge 1.
		\]
		Squaring both sides, this is equivalent to
		\[
		\alpha^2+\beta^2+2\alpha\beta\cos t\ge 1,
		\]
		that is,
		\[
		\cos t\ge \gamma:=\frac{1-\alpha^2-\beta^2}{2\alpha\beta}.
		\]
		Since $\alpha^2+\beta^2=\|r_1\|_2^2\ge 1$, one has $\gamma\le 0$.
		
		If $\gamma\le -1$, then $\cos t\ge \gamma$ for every $t$, and therefore $E_1=\mathbb T$.
		If $-1<\gamma\le 0$, then the admissible set of angles is the closed interval
		\[
		[-\arccos(\gamma),\,\arccos(\gamma)],
		\]
		whose angular length is
		\[
		2\arccos(\gamma)\ge \pi.
		\]
		Thus in all cases $E_1$ is either all of $\mathbb T$ or a closed arc of angular length at least $\pi$.
		The same argument applies to $E_2$.
		
		Now any two closed arcs of the unit circle having angular length at least $\pi$ must intersect. Hence
		\[
		E_1\cap E_2\ne \varnothing.
		\]
		Choose $z\in E_1\cap E_2$ and set
		\[
		x:=(1,z)^T\in\C^2.
		\]
		Then $x\ne 0$, $|z|=1$, and thus
		\[
		|x|=e.
		\]
		Moreover,
		\[
		|Ax|
		=
		\bigl(|a+bz|,\ |c+dz|\bigr)^T
		\ge e.
		\]
		This proves the claim.
	\end{proof}
	
	\section*{Acknowledgments.}
	The author thanks Professor Siegfried M.~Rump for very helpful comments on several  earlier versions of the manuscript. He also thanks Professor Minghua Lin for introducing this problem to him in 2021. This work is supported by the China Scholarship Council, the Young Elite Scientists Sponsorship Program for PhD Students (China Association for Science and Technology), and the Fundamental Research Funds for the Central Universities at Xi'an Jiaotong University (Grant No.~xzy022024045).

	\appendix
	\section{Proof of Theorem~\ref{thm:global}}\label{sec:proof-compare}
	\begin{proof}[Proof of Theorem~\ref{thm:global}]
		Let $\K\in\{\R,\C\}$ and  $A\in\K^{n\times n}$. Set
		\[
		B:=\abs{A},
		\qquad
		\alpha:=\rhoo(B).
		\]
		If $\alpha=0$, then the conclusion is trivial. Assume henceforth that $\alpha>0$.
		
		By the Frobenius normal form (for example, see \cite{BP94} or \cite{Sen06}), there exists a nonempty index set
		$
		\mu\subseteq\{1,\dots,n\}
		$
		such that $B[\mu]$ is irreducible and
		\[
		\rhoo(B[\mu])=\rhoo(B)=\alpha.
		\]
		Write $k:=|\mu|$. Since $B[\mu]$ is nonnegative and irreducible, the Perron--Frobenius theorem yields a vector
		\[
		z=(z_1,\dots,z_k)\in\R_{>0}^k
		\]
		such that
		\[
		B[\mu]z=\alpha z.
		\]
		Define
		\[
		D:=\diag(z_1,\dots,z_k)
		\qquad\text{and}\qquad
		C:=D^{-1}A[\mu]D.
		\]
		Because $D$ has positive diagonal entries,
		\[
		\abs{C}e
		=
		D^{-1}\abs{A[\mu]}De
		=
		D^{-1}B[\mu]z
		=
		\alpha e.
		\]
		
		Apply Theorem~\ref{thm:mainp} with $p=\infty$ and $m=n=k$ to the matrix $C\in\K^{k\times k}$ and with
		$
		t=\alpha/k.
		$
		Since every row of $C$ has $\ell_1$-norm equal to $\alpha$, the hypotheses are satisfied. Hence there exists
		$
		x\in\K^k\setminus\{0\}
		$
		with $\norm{x}_\infty\le 1$ such that
		\[
		\abs{Cx}\ge \frac{\alpha}{k}\,e.
		\]
		As $\abs{x}\le e$, it follows that
		\[
		\abs{Cx}\ge \frac{\alpha}{k}\,\abs{x}.
		\]
		By the Collatz--Wielandt type characterization \eqref{eq:intro-cw},
		\[
		\rho_{\K}(C)\ge \frac{\alpha}{k}.
		\]
		
		Finally, by properties \textup{(vi)} and \textup{(iv)},
		\[
		\rho_{\K}(A)\ge \rho_{\K}(A[\mu])=\rho_{\K}(C)\ge \frac{\alpha}{k}\ge \frac{\alpha}{n}.
		\]
		Therefore,
		\[
		\rhoo(\abs{A})=\alpha\le n\,\rho_{\K}(A).
		\]
	\end{proof}
	
\section{Gaussian background}\label{app:gaussian}

For the convenience of the reader, we collect here the conventions and
elementary facts on real and complex Gaussian variables used in
Section~\ref{sec:gaussian}. Standard references for the material below include
Durrett~\cite{Dur19} for measure-theoretic probability, Gaussian
vectors, and the Cauchy ratio identity,
and Goodman~\cite{Goo63} (see also
\cite[Chapter~3, \S3.1--\S3.4]{MPH22}) for proper complex
Gaussian vectors.

Let all random variables be defined on an underlying probability
space $(\Omega,\mathcal F,\PP)$, where $\Omega$ is the sample space,
$\mathcal F$ is a $\sigma$-algebra of events, and $\PP$ is a probability
measure on $(\Omega,\mathcal F)$. If $A\in\mathcal F$ is an event, then
$\PP(A)$ denotes its probability. If $X$ is an integrable real-valued random
variable, that is, a measurable function $X:\Omega\to\R$ satisfying
$
\int_\Omega |X(\omega)|\,d\PP(\omega)<\infty,
$
then its expectation is defined by
\[
\EE X=\int_\Omega X(\omega)\,d\PP(\omega),
\]
where $\omega\in\Omega$ denotes a sample point.

\subsection{Densities on $\R^m$ and $\C^m$}

Let $X$ be a random vector taking values in $\R^m$. We say that $X$ has
\emph{density} $f:\R^m\to[0,\infty)$ if for every Borel set
$E\subseteq \R^m$,
\[
\PP(E):=\PP(X\in E)=\int_E f(x)\,dx,
\]
where $dx$ denotes Lebesgue measure on $\R^m$. In particular,
$
\int_{\R^m} f(x)\,dx=1.
$

Likewise, if $Z$ is a random vector taking values in $\C^m$, we identify
$\C^m$ with $\R^{2m}$ in the standard way and say that $Z$ has density
$p:\C^m\to[0,\infty)$ if for every Borel set $E\subseteq \C^m$,
\[
\PP(E):=\PP(Z\in E)=\int_E p(z)\,dz,
\]
where $dz$ denotes Lebesgue measure on $\C^m\cong\R^{2m}$. Again,
$
\int_{\C^m} p(z)\,dz=1.
$

In the scalar complex case, if $z=x+iy\in\C$, then $dz=dx\,dy$. Passing to
polar coordinates $z=re^{i\theta}$ gives
\[
dz=r\,dr\,d\theta.
\]

\subsection{Real and complex Gaussian vectors}

A random vector
\[
g=(g_1,\dots,g_m)^T\in\R^m
\]
is said to have the \emph{standard real Gaussian distribution}, written
\[
g\sim N(0,I_m),
\]
if it has density
\[
(2\pi)^{-m/2}\exp\!\left(-\frac{\|x\|_2^2}{2}\right),
\qquad x\in\R^m.
\]
Equivalently, the coordinates $g_1,\dots,g_m$ are independent $N(0,1)$ random
variables.

More generally, if $\Sigma\in\R^{m\times m}$ is a symmetric positive
semidefinite matrix, then
\[
X\sim N(0,\Sigma)
\]
means that $X$ is an $m$-dimensional centered Gaussian random vector with
covariance matrix $\Sigma$, that is,
\[
\EE X=0,
\qquad
\operatorname{Cov}(X)=\EE[XX^T]=\Sigma.
\]
Equivalently, there exist a matrix $B\in\R^{m\times m}$ satisfying
\[
BB^T=\Sigma
\]
and a random vector $\eta\sim N(0,I_m)$ such that
\[
X=B\eta.
\]

Similarly, a random vector
\[
\zeta=(\zeta_1,\dots,\zeta_m)^T\in\C^m
\]
is said to have the \emph{standard complex Gaussian distribution}, written
\[
\zeta\sim CN(0,I_m),
\]
if it has density
\[
\pi^{-m}e^{-\|z\|_2^2},
\qquad z\in\C^m\cong\R^{2m}.
\]
Equivalently, the coordinates $\zeta_1,\dots,\zeta_m$ are independent
standard complex Gaussian random variables, each with scalar density
\[
\pi^{-1}e^{-|z|^2},
\qquad z\in\C.
\]

If $\Sigma\in\C^{m\times m}$ is Hermitian positive semidefinite, then
\[
Z\sim CN(0,\Sigma)
\]
means that $Z$ is an $m$-dimensional centered complex Gaussian random vector
with covariance matrix $\Sigma$, that is,
\[
\EE Z=0,
\qquad
\operatorname{Cov}(Z):=\EE[ZZ^*]=\Sigma.
\]
Equivalently, there exist a matrix $B\in\C^{m\times m}$ satisfying
\[
BB^*=\Sigma
\]
and a random vector $\xi\sim CN(0,I_m)$ such that
\[
Z=B\xi.
\]

In this paper, $CN(0,\Sigma)$ is always understood in this proper complex
Gaussian sense.

\subsection{Two standard distributional identities}

We first recall that if $\xi\sim CN(0,1)$, then $|\xi|^2$ has the exponential
distribution with rate $1$, that is, $\operatorname{Exp}(1)$ with density
\[
f(t)=e^{-t}\mathbf 1_{[0,\infty)}(t),
\]
where
$
\mathbf 1_{[0,\infty)}(t)=
\begin{cases}
	1,& t\ge 0,\\
	0,& t<0.
\end{cases}.
$
Indeed, for $t\ge 0$,
\[
\PP(|\xi|^2\le t)
=
\int_{|z|^2\le t}\pi^{-1}e^{-|z|^2}\,dz.
\]
Passing to polar coordinates gives
\[
\PP(|\xi|^2\le t)
=
\frac{1}{\pi}\int_0^{2\pi}\int_0^{\sqrt t} e^{-r^2}r\,dr\,d\theta
=
1-e^{-t}.
\]
Hence $|\xi|^2$ has density
\[
e^{-t}\mathbf 1_{[0,\infty)}(t).
\]

Next, if $Z_1$ and $Z_2$ are independent $N(0,1)$ random variables, then
$
Z_1/Z_2
$
has the standard Cauchy distribution; see~\cite[p.~168, 3.8.7(ii)]{Dur19}. In
particular, for every $s\ge 0$,
\[
\PP\!\left(\left|\frac{Z_1}{Z_2}\right|<s\right)
=
\int_{-s}^s \frac{dt}{\pi(1+t^2)}
=
\frac{2}{\pi}\arctan s.
\]

\subsection{Orthogonal and unitary invariance}

If $g\sim N(0,I_m)$ and $Q\in\R^{m\times m}$ is orthogonal, then
\[
Q^Tg\sim N(0,I_m).
\]
Indeed, the density of $g$ depends only on $\|x\|_2$, and is therefore
invariant under orthogonal transformations.

Similarly, if $\zeta\sim CN(0,I_m)$ and $Q\in\C^{m\times m}$ is unitary, then
\[
Q^*\zeta\sim CN(0,I_m).
\]
Again, this follows from the fact that the density
\[
\pi^{-m}e^{-\|z\|_2^2}
\]
depends only on the Euclidean norm of $z\in\C^m$.

\begin{proof}[Proof of Lemma~\ref{lem:gaussian-facts}]
	Part \textup{(a)} is exactly the orthogonal invariance of the standard real
	Gaussian distribution recorded above. Part \textup{(b)} is the corresponding
	unitary invariance in the complex setting. Part \textup{(c)} is the
	exponential identity for the squared modulus of a scalar complex Gaussian
	variable. Part \textup{(d)} is the classical Cauchy-ratio fact, and the
	displayed probability formula follows by integrating the Cauchy density over
	$[-s,s]$:
	\[
	\PP\!\left(\left|\frac{Z_1}{Z_2}\right|<s\right)
	=
	\int_{-s}^s \frac{dt}{\pi(1+t^2)}
	=
	\frac{2}{\pi}\arctan s.
	\qedhere
	\]
\end{proof}
\end{document}